\theoremstyle{plain}
\newtheorem{theorem}{Theorem}[section]
\newtheorem{corollary}{Corollary}[section]
\newtheorem{lemma}{Lemma}[section]
\newtheorem{proposition}{Proposition}[section]
\theoremstyle{definition}
\newtheorem{definition}{Definition}[section]
\newtheorem{notation}{Notation}[section]
\newtheorem{remark}{Remark}[section]
\newcounter{grealm}
\newcounter{hrealm}
\newcounter{sarealm}
\newcounter{forms}
\newcounter{derivaeq}
\newcounter{psect}
\newcounter{cscan}
\renewcommand{\thehrealm}{H\arabic{hrealm}}
\numberwithin{equation}{section}
\begin{document}

\title{Dissipative homoclinic loops and rank one chaos}

\author{Qiudong Wang}
\address{University of Arizona}
\email[Qiudong Wang]{dwang@math.arizona.edu}
\urladdr[Qiudong Wang]{www.math.arizona.edu/$\sim$dwang}

\author{William Ott}
\address{Courant Institute of Mathematical Sciences}
\urladdr[William Ott]{www.cims.nyu.edu/$\sim$ott}

\keywords{dissipative homoclinic loop, rank one map, strange attractor, SRB measure}
\subjclass[2000]{Primary: 37D45, 37C40}

\date{\today}

\begin{abstract}
We prove that when subjected to periodic forcing of the form
$p_{\mu, \rh, \om} (t) = \mu (\rh h(x,y) + \sin (\om t))$, certain
second order systems of differential equations with dissipative
homoclinic loops admit strange attractors with SRB measures for a
set of forcing parameters $(\mu, \rh, \om)$ of positive measure.
Our proof applies the recent theory of rank one maps, developed by
Wang and Young~\cite{WqYls2001, WqYls2008} based on the analysis
of strongly dissipative H\'enon maps by Benedicks and Carleson
~\cite{BmCl1985, BmCl1991}.
\end{abstract}

\maketitle

\section{Introduction}\label{s0}
In this paper we establish connections between a recent dynamics theory, namely the theory
of rank one maps, and a classical dynamical scenario, namely periodic perturbations of
homoclinic solutions. We prove that when subjected to periodic forcing of the form
$p_{\mu, \rho, \omega}(t) = \mu (\rho h(x, y) + \sin \omega t)$, certain second order
equations with a dissipative homoclinic saddle admit strange attractors with SRB measures
for a positive measure set of forcing parameters $(\mu, \rho, \omega)$.

\smallskip

\noindent {\bfseries \sffamily A. The theory of rank one maps.} \
The theory of rank one maps, systematically developed by Wang and Young~\cite{WqYls2001,
  WqYls2008}, concerns the dynamics of maps with some instability in one direction of the
phase space and strong contraction in all other directions of the
phase space. This theory originates from the work of
Jackboson~\cite{Jm1981} on the quadratic family $f_a(x) = 1-ax^2$
and the {\it tour de force} analysis of strongly dissipative
H\'enon maps by Benedicks and Carelson~\cite{BmCl1991}.

The theory of 1D maps with critical points has progressed
dramatically over the last 30 years~\cite{Mi1981, Jm1981, CE1983,
BmCl1985, TpTcYls1994}. The breakthrough from $1$D maps to $2$D
maps is due to Benedicks and Carleson ~\cite{BmCl1985, BmCl1991}.
Based on ~\cite{BmCl1991}, SRB measures were constructed for the
first time in ~\cite{BmYls1993} for a (genuinely) nonuniformly
hyperbolic attractor. The results in~\cite{BmCl1991} were
generalized in ~\cite{MlVm1993} to small perturbations of
H\'{e}non maps. These papers form the core material referred to in
the second box below.

\vspace{.3in}

\hspace{.7cm} \framebox[3cm][c]{$\begin{array}{c}
\text{Theory of}  \\
\text{$1$D maps}\end{array}$} \ {\Large $\longrightarrow$} \
\framebox[3cm][c]{$\begin{array}{c}
\text{H\'enon maps} \\
\text{ \& perturbations\ }
\end{array}$}
\ {\Large $\longrightarrow$} \ \framebox[3cm][c]{$\begin{array}{c}
\text{\it Rank one} \\
\text{\it attractors}
\end{array}$}

\vspace{.3in}

All of the results in the second box depend on the formula of the H\'enon maps. In going
from the second box to the third box, the authors of~\cite{WqYls2001} and~\cite{WqYls2008}
have aimed at developing a comprehensive chaos theory for a nonuniformly hyperbolic
setting that is flexible enough to be applicable to concrete systems of differential
equations.

The theory of rank one maps has been applied to various systems of
ordinary differential equations~\cite{WqYls2002, WqYls2003,
Lk2006, GjWmYls2006, Ow2007}.  The most siginificant application
thus far has been the analysis of periodically-kicked limit cycles
and Hopf bifurcations~\cite{WqYls2002, WqYls2003}.  In these
cases, periodic kicks of the limit cycle and separated by long
periods of relaxation to the limit cycle.  If the contraction to
the limit cycle is weak and the shear is strong, then admissible
families of rank one maps are produced.  The analysis of
periodically-kicked Hopf limit cycles has been extended to the
setting of parabolic partial differential
equations~\cite{LkWqYls2008}. These studies illustrate that the
theory of rank one maps can be used to rigorously prove the
existence of strange attractors with SRB measures for physically
meaningful differential equations.

\smallskip

\noindent {\bfseries \sffamily B.  Periodically-perturbed
  homoclinic solutions.} \ Periodically-forced second order systems, such as the
periodically-perturbed nonlinear pendulum, Duffing's equation, and van der Pol's equation
have been studied extensively in the past ~\cite{Av1989, Dg1918, GjHp1983, Lm1981, Ln1949,
  vdPb1927}. When a given second order equation with a homoclinic saddle is periodically
perturbed, the stable and unstable manifolds of the perturbed saddle intersect
transversely within a certain range of forcing parameters, generating homoclinic tangles
and chaotic dynamics~\cite{Mv1963, Ph1957a, Ph1957b, Ph1957c, Ss1967}.  Homoclinic tangles
were first observed by H. Poincar\'e ~\cite{Ph1957a, Ph1957b, Ph1957c}.  There also exist
parameters for which the stable and unstable manifolds of the perturbed saddle are pulled
apart.  For these two cases, Fig. 1 schematically illustrates the time-$T$ maps for the
perturbed equations, where $T$ is the period of the perturbation. The first picture leads
to exceedingly messy dynamics and the second appears simple.

\begin{picture}(12, 4)
\put(2,0){ \psfig{figure=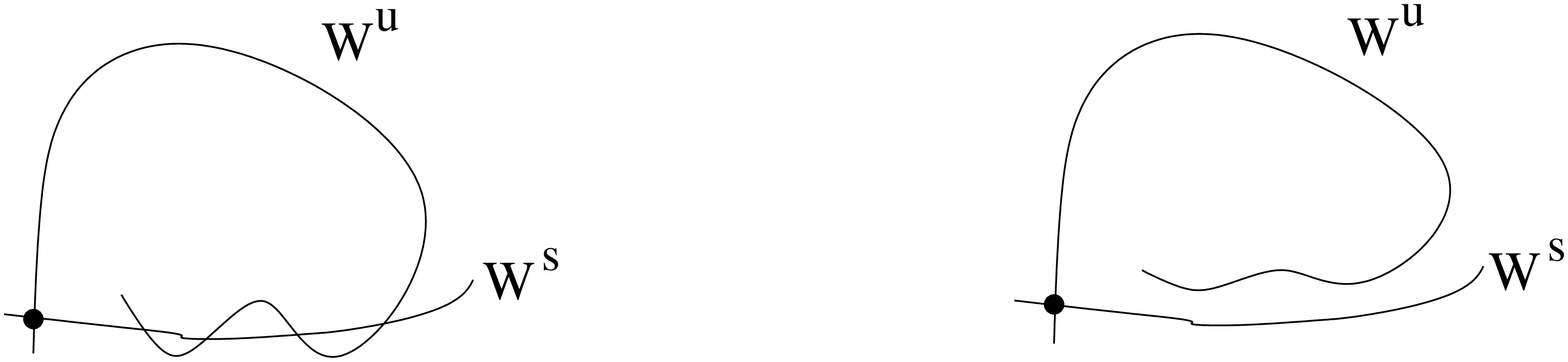,height = 3cm, width =
11cm} }
\end{picture}

\centerline{\ \ \ (a)  \hspace{2.5in} (b)}

\smallskip

\centerline{Fig. 1 \ (a) Homoclinic intersections and (b)
separated invariant manifolds.}

\medskip

In this paper we study periodically-perturbed second order equations but we follow a new
route. Instead of looking at the time-$T$ maps, we extend the phase space to three
dimensions and we {\it explicitly} compute return maps induced by the perturbed equations
in a neighborhood of the extended homoclinic solution.  To be more precise, we use
variables $(x, y)$ to represent the phase space of the unperturbed equation and we let
$(x, y) = (0, 0)$ be the saddle fixed point. Write the homoclinic solution for $(x, y) =
(0, 0)$ as $\ell$. We construct a small neighborhood of $\ell$ by taking the union of a
small neighborhood $U_{\varepsilon}$ of $(0, 0)$ and a small neighborhood $D$ around
$\ell$ outside of $U_{\frac{1}{4} \varepsilon}$. See Fig. 2. Let $\sigma^{\pm} \in
U_{\varepsilon} \cap D$ be the two line segments depicted in Fig. 2, both of which are
perpendicular to the homoclinic solution. We use the angular variable $\theta \in \Sbb^1$
to represent the time.

\begin{picture}(7, 5.5)
\put(4,0){ \psfig{figure=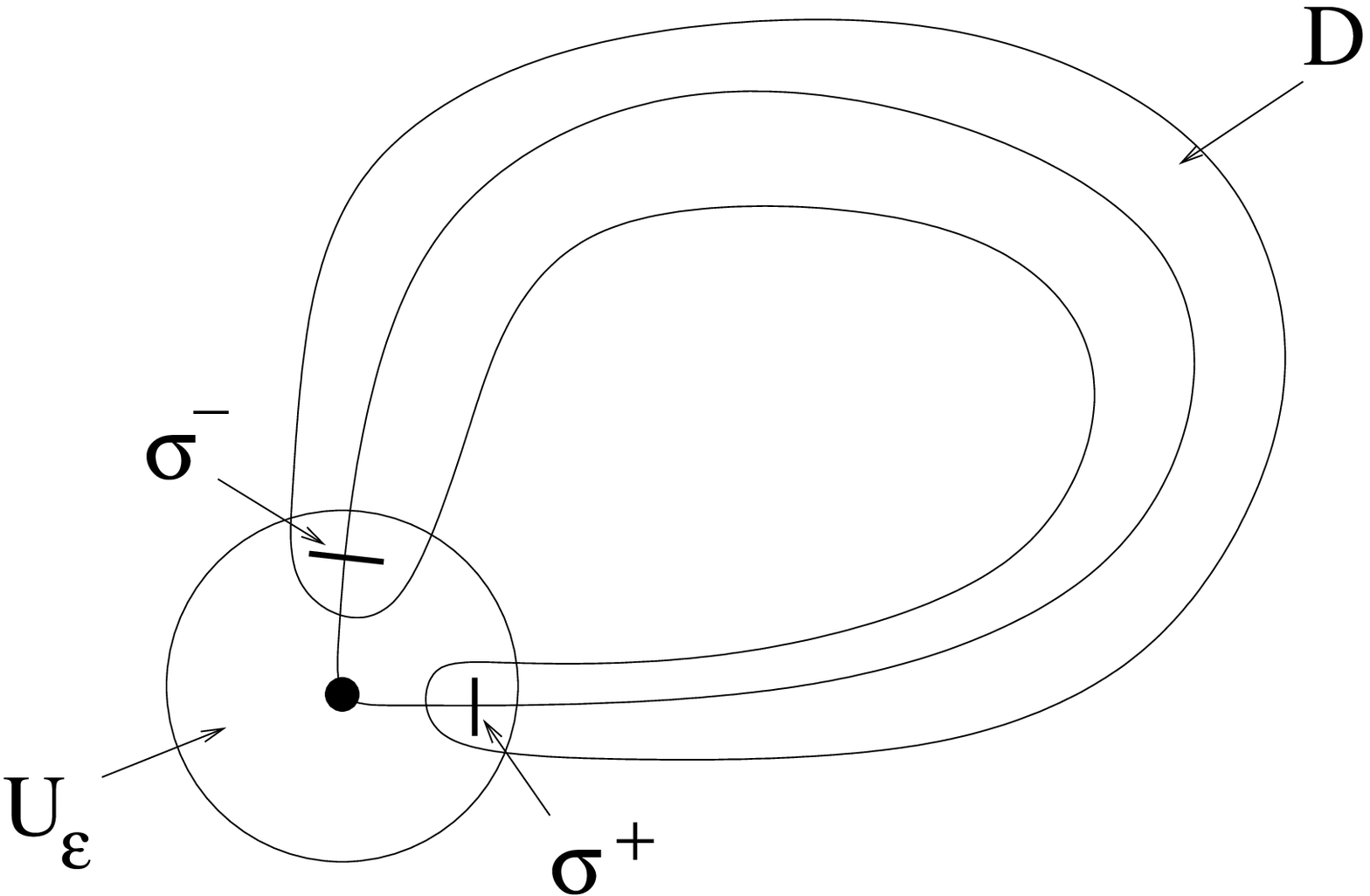,height = 5cm, width = 7cm}
}
\end{picture}

\medskip

\centerline{Fig. 2 \ $U_{\varepsilon}$, $D$ and $\sigma^{\pm}$.}

\medskip

In the extended phase space $(x, y, \theta)$ we define
\begin{equation*}
\mscr{U}_{\varepsilon} = U_{\varepsilon} \times \Sbb^1, \quad {\bf
D} = D \times \Sbb^1
\end{equation*}
and we let
\begin{equation*}
\Sigma^{\pm} = \sigma^{\pm} \times \Sbb^1.
\end{equation*}
Let $\mscr{N}: \Sigma^+ \to \Sigma^-$ be the map induced by the solutions in
$\mscr{U}_{\varepsilon}$ and let $\mscr{M}: \Sigma^- \to \Sigma^+$ be the map induced by
the solutions in ${\bf D}$. See Fig. 3.  We first compute $\mscr{M}$ and $\mscr{N}$
separately.  We then compose $\mscr{N}$ and $\mscr{M}$ to obtain an explicit formula for
the return map $\mscr{N} \circ \mscr{M}: \Sigma^- \to \Sigma^-$.  We show that for a large
open set of forcing parameters, these return maps naturally fall into the category of the
rank one maps studied in~\cite{WqYls2001} and~\cite{WqYls2008}.

\begin{picture}(8, 6)
\put(3.5,0){ \psfig{figure=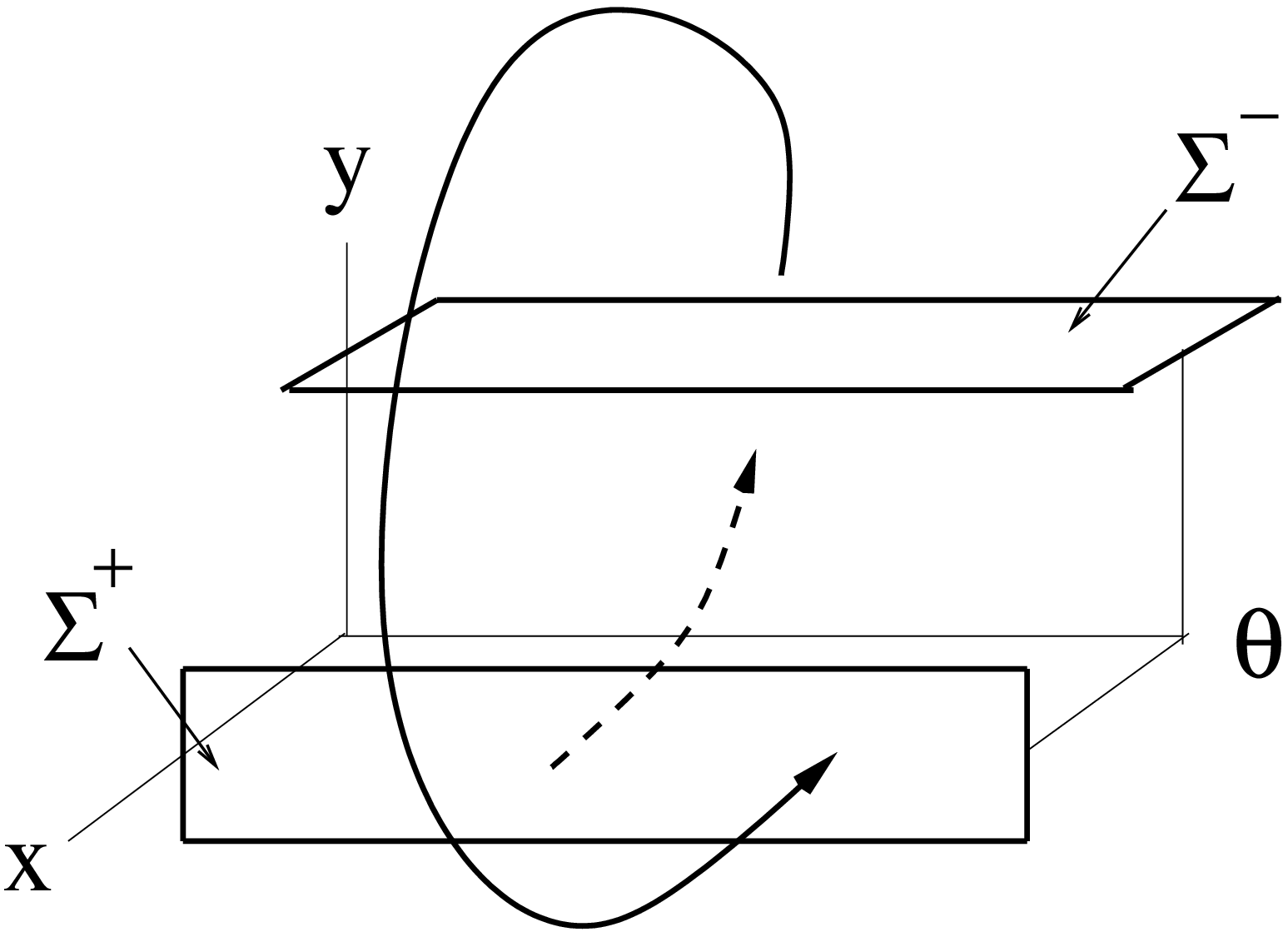,height = 5cm, width =
8cm} }
\end{picture}

\medskip

\centerline{Fig. 3 \ Construction of return maps.}

\medskip

\noindent {\bfseries \sffamily C. A brief summary of results.} \
Autonomous second order systems with a dissipative homoclinic
saddle are subjected to periodic forcing of the form $p_{\mu,
\rho, \omega}(t) = \mu (\rho h(x, y) + \sin \omega t)$, where
$\mu$, $\rho$, and $\omega$ are forcing parameters. We prove that
if the saddle is dissipative and nonresonant (see~\bpref{li:h1} in
Section~\ref{s1}) and if the unperturbed equation satisfies
certain nondegeneracy conditions (see~\bpref{li:h2} in
Section~\ref{s1}), then there exists an interval $[\rho_1,
\rho_2]$ such that for $\rh \in [\rh_{1}, \rh_{2}]$, the family of
return maps
\begin{equation*}
\{ (\mscr{N} \circ \mscr{M})_{\mu} : \mu \text{ is sufficiently small} \}
\end{equation*}
is a family of rank one maps to which the theory of~\cite{WqYls2001} and~\cite{WqYls2008}
directly applies.  In this parameter range, the stable and unstable manifolds of the
perturbed saddle do not intersect.  The dynamical properties of the periodically-perturbed
equations are determined by the magnitude of the forcing frequency $\omega$. When the
forcing frequency $\omega$ is small, there exists an attracting torus in the extended
phase space for all $\mu$ sufficiently small. In particular, there exists an attracting
torus consisting of quasiperiodic solutions for a set of $\mu$ with positive Lebesgue
density at $\mu = 0$. As $\omega$ increases, the attracting torus is disintegrates into
isolated periodic sinks and saddles. Increasing the magnitude of the forcing frequency
$\omega$ further, the phase space is stretched and folded, creating horseshoes and strange
attractors.  We prove in particular that these are strange attractors with SRB measures.
SRB measures represent visible statistical law in nonuniformly hyperbolic systems. The
chaos associated with them is both sustained in time and observable.  First constructed
for uniformly hyperbolic systems by Sinai~\cite{Sy1972}, Ruelle~\cite{Rd1976}, and
Bowen~\cite{Br1975}, SRB measures are the measures most compatible with volume when the
volume is not preserved. See~\cite{Yls2002} for a review of the theory and applications of
SRB measures.

In this paper, we focus exclusively on the scenario of rank one chaos.  See Theorems 1 and
2 in~\cite{WqYls2002} for results concerning the other scenarios described above.  We
remark that~\cite{WqYls2001} not only proves the existence of SRB measures, but also
establishes a comprehensive dynamical profile for the maps with SRB measures.  This
profile includes a detailed description of the geometric structure of the attractor and
statistical properties such as exponential decay of correlations.  We have opted to limit
the statements of our theorems to the existence of SRB measures, but all aspects of this
larger dynamical profile apply.

This paper is not only about the generic existence of rank one attractors in
periodically-forced second order equations.  Explicit, verifiable conditions are
formulated.  Based on the theorems of this paper, the first named author has proven the
existence of rank one chaos in a Duffing equation of the form
\begin{equation*}
\frac{d^2 q}{dt^2} + (a - b q^2) \frac{d q}{dt} - q + q^3 = \mu
\sin \omega t
\end{equation*}
and in a periodically-forced pendulum of the form
\begin{equation*}
\frac{d^2 \theta}{d t^2} - \delta \frac{d \theta}{dt} + \sin
\theta =  \alpha + \mu \sin \omega t.
\end{equation*}
These results will be presented in separate papers.

The analysis in this paper is not sensitive to the particular form we have chosen for the
forcing. We work with the forcing function $p_{\mu, \rho, \omega}$ because the resulting
analysis is relatively transparent. A theorem analogous to our main theorem holds for a
general class of forcing functions.

\smallskip

This paper is organized as follows. We state our results precisely in Section~\ref{s1}. In
Section~\ref{s:modelas} we discuss a model of Afraimovich and Shilnikov.
Sections~\ref{s2}--\ref{s6} are devoted to the proof of the main theorem.

\medskip

\noindent {\bfseries \sffamily D.  Acknowledgment.} \ Our method is
motivated by a paper of V.S.  Afraimovich and L.P. Shil'nikov published almost thirty
years ago~\cite{AvSl1977}. Afraimovich and Shil'nikov observed that for
periodically-forced systems with dissipative homoclinic loops, the dissipation around the
fixed point could potentially put the flow-induced return maps into the category (in our
terminology) of rank one maps. In this paper we basically start from where they stopped,
turning an insightful observation into a theorem one can use to analyze concrete
equations.  We are deeply indebted to Afraimovich for bringing his previous work with
Shilnikov~\cite{AvSl1977} to our attention. See also~\cite{AvHsb2003}. We also thank
Kening Lu and Lai-Sang Young for motivating conversations related to this work, and
particularly Lai-Sang Young for connecting us to Afraimovich and his work with Shil'nikov.

\section{Statement of Results}\label{s1}

Let $(x, y) \in {\mathbb R}^2$ be the phase variables and $t$ be
the time. We start with an autonomous system
\begin{equation}\label{f1-s1.1}
\left\{
\begin{aligned}
\frac{dx}{dt} &= - \alpha x + f(x, y)\\
\frac{dy}{dt} &= \beta y + g(x, y)
\end{aligned}
\right.
\end{equation}
where $f$ and $g$ are real analytic at $(x, y) = (0, 0)$
and $f(0, 0) = g(0, 0) = \partial_x f(0, 0) = \partial_y f(0, 0) =
\partial_x g(0, 0) = \partial_y g(0, 0) = 0$. We assume that $\alpha$ and
$\beta$ satisfy a certain Diophantine nonresonance condition and that
$(x, y) = (0, 0)$ is a dissipative saddle point. Namely, we assume the following.

\vspace{0.3cm}
\refstepcounter{hrealm}
\label{li:h1}
\noindent
{\bfseries (\thehrealm) Nonresonant dissipative saddle.}
\begin{enumerate}[{\bfseries (a)}]
\item
\label{li:h1a}
There exist $d_1, d_2> 0$ such that for all
$m$, $n \in {\mathbb Z}^+$, we have
\begin{equation*}
|m \alpha - n \beta|
> d_1 (|m| + |n|)^{-d_2}.
\end{equation*}
\item
\label{li:h1b}
$0 < \beta < \alpha$.
\end{enumerate}
\vspace{0.3cm}
We also assume that the positive $x$-side of the local stable
manifold of $(0,0)$ and the positive $y$-side of the local unstable manifold
of $(0, 0)$ are included as part of a homoclinic solution which
we denote as $x = a(t)$, $y = b(t)$. Let
\begin{equation*}
\ell = \{ \ell(t)=(a(t), b(t)) \in {\mathbb R}^2 : t \in
{\mathbb R} \}.
\end{equation*}
We further assume that $f(x, y)$ and $g(x, y)$ are $C^4$ in a
sufficiently small neighborhood of $\ell$.

To the right side of equation ~(\ref{f1-s1.1}) we add a
time-periodic term to form a non-autonomous system
\begin{equation}\label{f2-s1.1}
\left\{
\begin{aligned}
\frac{dx}{dt} &= - \alpha x + f(x, y) - \mu (\rho h(x, y) + \sin
\omega t)\\
\frac{dy}{dt} &= \beta y + g(x, y) + \mu (\rho h(x, y) + \sin
\omega t)
\end{aligned}
\right.
\end{equation}
where $\mu$, $\rho$, and $\omega$ are parameters. We assume that $h(x, y)$ is analytic at
$(x, y) = (0, 0)$ and $C^4$ in a small neighborhood of the homoclinic loop $\ell$. The
parameter $\mu$ satisfies $0 \leqs \mu \ll 1$ and controls the magnitude of the forcing
term. The prefactor $\rho$ and the forcing frequency $\omega$ are much larger parameters,
the ranges of which we will make explicit momentarily. Observe that the same forcing
function is added to the equation for $y$ but subtracted from the equation for $x$.  We do
this to facilitate the application of our theorem to a certain concrete second order
system.  The analysis in this work is by no means limited to these particular forcing
functions.

To study ~(\ref{f2-s1.1}), we introduce an angular variable $\theta
\in \Sbb^{1}$ and write it as
\begin{equation}\label{f3-s1.1}
\left\{
\begin{aligned}
\frac{dx}{dt} &= - \alpha x + f(x, y) - \mu (\rho h(x, y) + \sin \theta) \\
\frac{dy}{dt} &= \beta y + g(x, y) + \mu (\rho h(x, y) + \sin \theta) \\
\frac{d \theta}{dt} &= \omega.
\end{aligned}
\right.
\end{equation}
We denote
\begin{equation*}
(u(t), v(t)) = \left\| \frac{d}{dt} \ell(t) \right\|^{-1} \frac{d}{dt}
\ell(t)
\end{equation*}
where $\ell(t) = (a(t), b(t))$ is the homoclinic loop of equation
~(\ref{f1-s1.1}). The vector $(u(t), v(t))$ is a unit vector
tangent to $\ell$ at $\ell(t)$. Define
\begin{equation}\label{f1a-s1.1}
\begin{aligned}
E(t) &= v^2(t) (- \alpha + \partial_x f(a(t), b(t))) +
u^2(t)(\beta +
\partial_y g(a(t), b(t))) \\
&\qquad {}- u(t) v(t) (\partial_y f(a(t), b(t)) + \partial_x g(a(t),
b(t))).
\end{aligned}
\end{equation}
The quantity $E(t)$ measures the rate of expansion of the solutions of equation
~(\ref{f1-s1.1}) in the direction normal to $\ell$ at $\ell(t)$
(see Section~\ref{s3.1}).  In matrix form, we have
\begin{equation*}
E(t) =
\begin{pmatrix}
v(t) & -u(t)
\end{pmatrix}
\begin{pmatrix}
-\al + \pdop{x} f(\ell (t)) & \pdop{y} f(\ell (t)) \\
\pdop{x} g(\ell (t)) & \be + \pdop{y} g(\ell (t))
\end{pmatrix}
\begin{pmatrix}
v(t) \\
-u(t)
\end{pmatrix}
\end{equation*}
Define
\begin{equation}\label{f2a-s1.1}
\begin{aligned}
A &= \int_{-\infty}^{\infty}(u(s) +  v(s)) h(a(s), b(s)) e^{-
\int_{0}^{s}
E(\tau) \, d\tau} \, ds  \\
C &= \int_{-\infty}^{\infty}(u(s) + v(s)) \cos (\omega s) e^{- \int_{0}^{s} E(\tau) \, d\tau} \, ds \\
S &= \int_{-\infty}^{\infty}(u(s) +  v(s)) \sin (\omega s) e^{-
\int_{0}^{s} E(\tau) \, d\tau} \, ds.
\end{aligned}
\end{equation}

The integrals $A$, $C$, and $S$ are all absolutely convergent (see
Lemma~\ref{lem3-s2.2b}).  They describe the relative positions of the stable and unstable
manifolds of the perturbed saddle. See Fig. 4.  The quantity $\rho A \mu$ measures the
average distance between the stable and unstable manifolds and $\mu (C^2 +
S^2)^{\frac{1}{2}}$ measures the magnitude of the oscillation of the unstable manifold
relative to the stable manifold.

\begin{picture}(7.5, 5.2)
\put(4,0){ \psfig{figure= 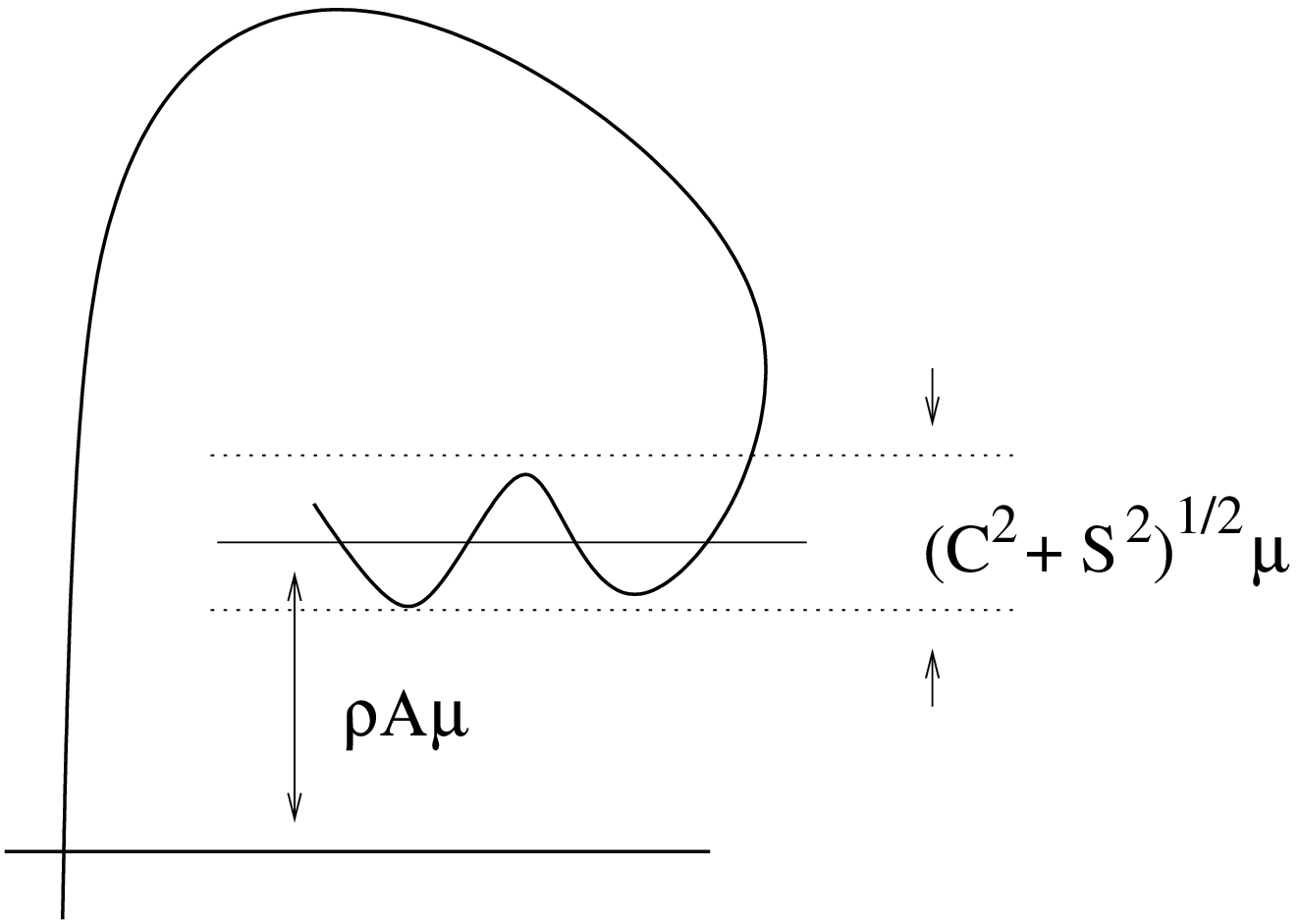, height = 4.5cm, width =
7cm} }
\end{picture}

\vskip .2in

\centerline{Fig. 4. \ The geometric meaning of the integrals $A$, $C$, and $S$.}

\vskip .2in

We assume that $A$, $C$, and $S$ satisfy the following
nondegeneracy conditions.

\vspace{0.3cm}
\refstepcounter{hrealm}
\label{li:h2}
\noindent
{\bfseries (\thehrealm) Nondegeneracy conditions on} $\mathbf{A}${\bfseries ,} $\mathbf{C}${\bfseries , and} $\mathbf{S}${\bfseries .}
\begin{enumerate}[{\bfseries (a)}]
\item \label{li:h2a} $A \neq 0$. \item \label{li:h2b} $C^2 + S^2
\neq 0$.
\end{enumerate}
\vspace{0.3cm}

Given equation ~(\ref{f2-s1.1}) satisfying~\bpref{li:h1} and~\bpref{li:h2}, we
let
\begin{equation*}
\rho_1 = - \frac{202}{99} \frac{\sqrt{C^2 + S^2}}{A}, \quad \rho_2
= - \frac{396}{101} \frac{\sqrt{C^2 + S^2}}{A}.
\end{equation*}
We also let
\begin{equation*}
I = \{ z \in {\mathbb R}, \ |z| < K \mu \}
\end{equation*}
for some $K > 1$ sufficiently large independent of $\mu$ and
\begin{equation*}
\Sigma  = \{\ell(0) + (v(0), - u(0)) z \in {\mathbb R}^2 : z \in I
\} \times \Sbb^{1}.
\end{equation*}
The following is the main theorem of this paper.
\begin{theorem}\label{th-s1.1} Assume that~\pref{f2-s1.1}
satisfies~\bpref{li:h1} and~\bpref{li:h2}\bpref{li:h2a}.  There
exists $\om_{0} > 0$ such that if $\om \in \RR$
satisfies~\bpref{li:h2}\bpref{li:h2b} and $|\om| > \om_{0}$, then
for every $\rh \in [\rh_{1}, \rh_{2}]$ we have the following.
\begin{enumerate}
\item For $\mu$ sufficiently small, equation~\pref{f3-s1.1} induces a well-defined return
  map $\mscr{F}_{\mu} : \Si \to \Si$.
\item There exists a set $\De_{\om, \rh}$ of values of $\mu$ with positive lower Lebesgue
  density at $\mu = 0$ such that for every $\mu \in \De_{\om, \rh}$, $\mscr{F}_{\mu}$
  admits a strange attractor that supports an ergodic SRB measure $\nu$.  Furthermore,
  Lebesgue almost every point on $\Si$ is generic with respect to $\nu$.
\end{enumerate}
\end{theorem}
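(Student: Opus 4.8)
The plan is to carry out the programme sketched in the introduction. Working in the extended system~\pref{f3-s1.1}, I would compute the two flow-induced maps $\mscr{M}\colon\Sigma^-\to\Sigma^+$ along the loop (outside a small box around the saddle) and $\mscr{N}\colon\Sigma^+\to\Sigma^-$ across that box, conjugate them with the passages between $\Si$ and $\Sigma^\pm$, and compose to get an explicit formula for $\mscr{F}_\mu=(\mscr{N}\circ\mscr{M})_\mu$. The claim is that, after the rescaling $z=\mu w$, the family $\{\mscr{F}_\mu\}$ is an admissible family of rank one maps in the sense of~\cite{WqYls2001,WqYls2008}, with singular limit ($\mu\to0$) a one-parameter family of circle maps, and that the hypotheses of that theory hold once $|\om|>\om_0$; Theorem~\ref{th-s1.1} then follows by invoking~\cite{WqYls2001,WqYls2008}.

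For the loop map, the flow of~\pref{f3-s1.1} outside the saddle box is a smooth $\mu$-perturbation of the flow of~\pref{f1-s1.1} along $\ell$. Linearising along $\ell$, the component normal to $\ell$ is transported with multiplier $\exp\!\big(\int E(\tau)\,d\tau\big)$ from~\pref{f1a-s1.1}, and a first-order computation in $\mu$ (whose absolute convergence is Lemma~\ref{lem3-s2.2b}) shows that a point entering $\Sigma^+$ does so at signed distance $d(\theta,w,\mu)=\mu\big(\mathcal{E}(\theta)+b_0w+O(\mu)\big)$ from the local stable manifold of the perturbed saddle, where $b_0$ is $O(1)$ and $\mathcal{E}(\theta)=\rh A+C\sin\theta+S\cos\theta=\rh A+\sqrt{C^2+S^2}\,\sin(\theta+\psi)$ with $A,C,S$ the integrals~\pref{f2a-s1.1}. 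The prescription $\rh\in[\rh_1,\rh_2]$ is exactly what forces $|\rh A|$ to lie strictly between $2\sqrt{C^2+S^2}$ and $4\sqrt{C^2+S^2}$, so that $\mathcal{E}$, hence $d$, has constant sign and stays bounded away from $0$ and $\infty$: the perturbed invariant manifolds are pulled apart and every incoming point misses the saddle cleanly (which already gives part~(1) once the box passage below is under control). Across the box I would use~\bpref{li:h1a} to build $C^4$ coordinates in which the flow is $C^4$-close to the linear flow $\dot x=-\al x,\ \dot y=\be y$; a point entering at distance $d$ spends time $T=-\be^{-1}\log d+O(1)$ there, its $x$-like coordinate is contracted to order $d^{\al/\be}$, and its angle advances by $\om T$. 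Composing everything and setting $c(\mu)=c_0-\tfrac{\om}{\be}\log\mu\pmod{2\pi}$, the return map takes the schematic form
\[
\bar\theta=\theta-\frac{\om}{\be}\log\big|\mathcal{E}(\theta)+b_0w+O(\mu)\big|+c(\mu),\qquad
\bar w=W_0(\bar\theta)+\mu^{\al/\be-1}\,\Psi(\theta,w,\mu),
\]
with $W_0$ the rescaled position of the perturbed unstable manifold and $\Psi$ bounded with bounded derivatives. Since $\al/\be>1$ by~\bpref{li:h1b}, the factor $\mu^{\al/\be-1}\to0$: the disk direction is crushed, the image of $\Si$ collapses onto a thin neighbourhood of the graph $\{w=W_0(\theta)\}$, and in the singular limit $\mu\to0$ the family degenerates onto the one-parameter family of circle maps $f_c(\theta)=\theta-\tfrac{\om}{\be}\log|\mathcal{E}(\theta)|+c$, $c\in\Sbb^1$, parametrised by $\mu$ through $c(\mu)$.

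It then remains to verify the rank one hypotheses. Differentiating, $f_c'(\theta)=1+\tfrac{\om}{\be}\cos(\theta+\psi)/(r-\sin(\theta+\psi))$ with $r=|\rh A|/\sqrt{C^2+S^2}\in(2,4)$; this has modulus $\gtrsim|\om|/\be\gg1$ away from the two near-solutions of $\cos(\theta+\psi)=0$, and near those two points $f_c$ has genuine critical points that are nondegenerate because $f_c''$ there is $\approx\mp\om/(\be(r\mp1))\neq0$. Thus $f_c$ is a non-uniformly expanding circle map with two nondegenerate critical points, uniformly in $c$; parameter transversality is immediate since $\partial_cf_c\equiv1$ while the critical set does not move with $c$; and the distortion and $C^2$ control relating $f_c$, the $\mu$-dependence, and the full map $\mscr{F}_\mu$ to the singular limit come from the analyticity of $f,g,h$ at the saddle, their $C^4$ regularity near $\ell$, and the quantitative bounds above. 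The one genuinely global input is the existence of a parameter $c^\ast$ (equivalently a $\mu^\ast$) for which $f_{c^\ast}$ satisfies the Misiurewicz condition, i.e. all critical orbits stay a definite distance from the critical set; for this I would first establish the analogous statement for the Afraimovich--Shilnikov model of Section~\ref{s:modelas}, where the strong expansion lets one steer the two critical values onto preimages of an expanding periodic orbit avoiding the critical neighbourhoods, and then transfer it to $f_c$ as a perturbation.

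Granting this, the theorems of~\cite{WqYls2001,WqYls2008} apply to $\{\mscr{F}_\mu\}$ on each window $\mu\in[\mu_0e^{-2\pi\be/|\om|},\mu_0]$, on which the map is uniformly close to its singular limit: near $c^\ast$ there is a positive-measure set of parameters for which $\mscr{F}_\mu$ has a strange attractor carrying a unique ergodic SRB measure $\nu$ with Lebesgue-a.e. point of $\Si$ generic for $\nu$ (together with the rest of the dynamical profile). Since $c(\mu)=c_0-\tfrac{\om}{\be}\log\mu$ traverses $\Sbb^1$ once per such window with the good-parameter proportion bounded below independently of $\mu_0$, the union $\De_{\om,\rh}$ of these good parameters has positive lower Lebesgue density at $\mu=0$, which is part~(2). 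The technical heart and main obstacle is the derivation of $\mscr{F}_\mu$: producing the logarithmic angular term and the $\mu^{\al/\be-1}$ contraction with \emph{explicit, uniformly controlled} remainders, which needs~\bpref{li:h1a} to linearise near the (in general resonant) saddle to the required order and a careful propagation of the non-autonomous forcing through the long saddle passage so that the $O(1)$ error in $T$ and the error in $\Psi$ do not spoil the Misiurewicz and distortion estimates; establishing the single Misiurewicz parameter for the limiting circle family is the second delicate point, handled through the reduction to the model of Section~\ref{s:modelas}.
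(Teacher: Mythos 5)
Your proposal is correct and follows essentially the same route as the paper: the decomposition into $\mscr{M}$ and $\mscr{N}$, the $\mu^{-1}$ rescaling, the logarithmic angular term with the $\mu^{\al/\be-1}$ collapse onto the unstable manifold, the reparametrization of $\mu$ by $a=c(\mu)\bmod 2\pi$ on windows, and the appeal to the rank one theory all match Sections~\ref{s2}--\ref{s4}. The one point of divergence is the Misiurewicz/admissibility step: the paper simply invokes Proposition~\ref{prop2-s4.2} for families of the form $\thet\mapsto\thet+\Phi(\thet,a)+a+\mscr{K}\Psi(\thet)$ with $\mscr{K}=\om/\be$ large, whereas you propose to establish it for the Afraimovich--Shilnikov model and ``transfer it as a perturbation''---note that the limit family here is not a perturbation of the model's limit family (the functions $\Psi$ differ), so the correct statement is that both are instances of that general proposition, whose proof is indeed the critical-orbit-steering argument you sketch.
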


\begin{picture}(5.5, 5.5)
\put(5,0){ \psfig{figure=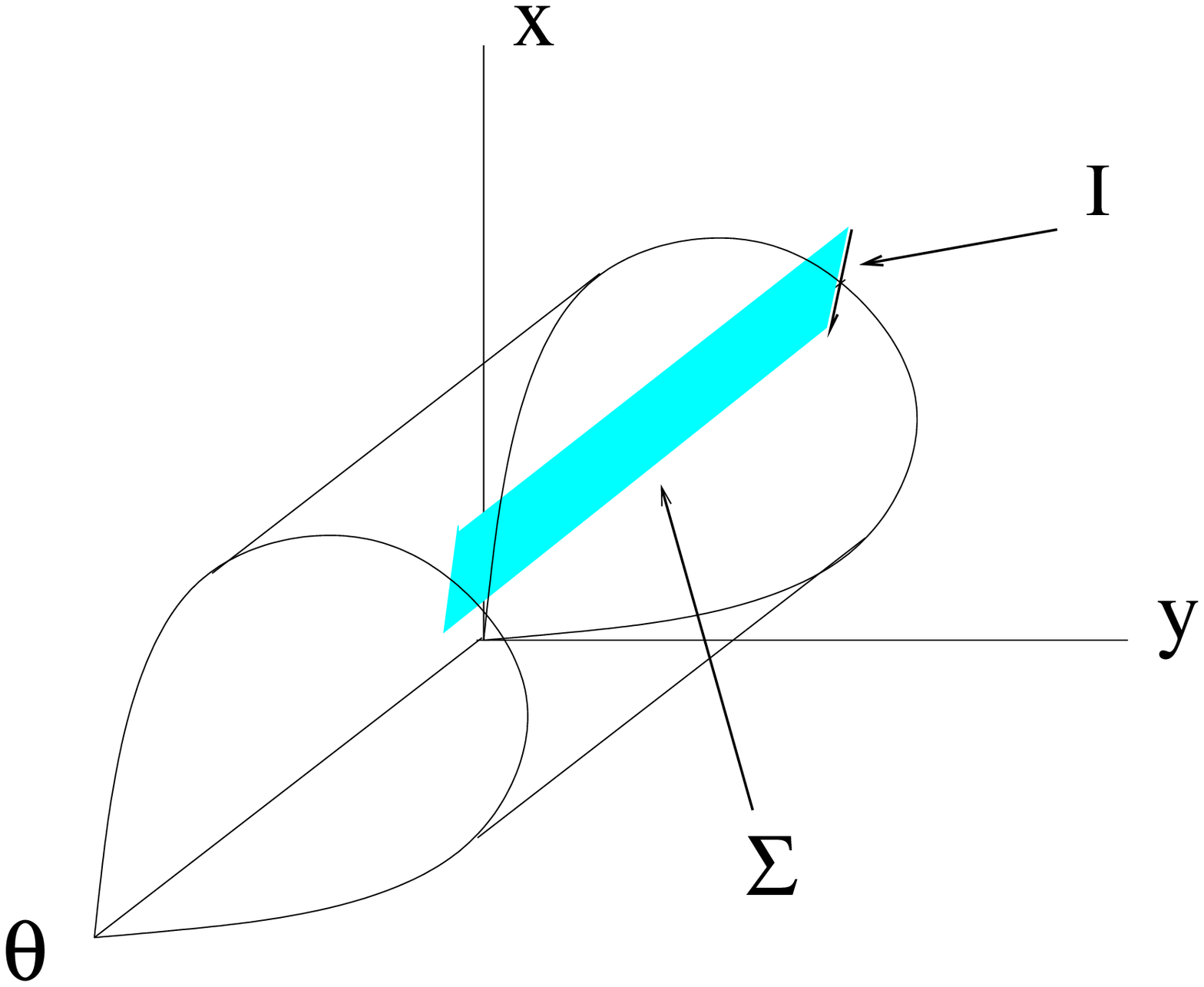,height = 5cm, width = 5cm}
}
\end{picture}

\vskip .2in

\centerline{Fig. 5. \ The Poincar\'e section $\Sigma$.}

\vskip .2in 

We recall that an $\mscr{F}$-invariant Borel
probability measure $\nu$ on $\Si$ is an {\bfseries \itshape SRB
measure} if $\mscr{F}$ has a positive Lyapunov exponent
$\nu$-almost everywhere and if the conditional measures of $\nu$
on unstable manifolds are absolutely continuous with respect to
the Riemannian measures on these unstable leaves. SRB measures
represent {\itshape visible statistical law} in chaotic systems.

\begin{remark}
As an important condition to be
verified,~\bpref{li:h2} does not cast doubt on the abundance of the type
of strange attractor proved to exist in this paper. By properly
adjusting the sign of $h(x, y)$ according to the sign of $u(s) +
v(s)$ on $\ell$, we can easily achieve $A \neq 0$.  Hypothesis~\bpref{li:h2}\bpref{li:h2b}
requires that the Fourier spectrum of the
function
\begin{equation*}
R(s) = (u(s) + v(s)) e^{-\int_0^s E(\tau) \, d \tau}
\end{equation*}
is not identically zero on the frequency range higher than
$\omega_0$. Since $R(s)$ decays exponentially as a function of
$s$, the Fourier transform $\hat R(\xi)$ is analytic in a strip
containing the real $\xi$-axis by the Paley-Wiener theorem.  It follows that $\hat R(\xi) = 0$ for
at most a discrete set of values of $\xi$ unless $R(s)$ is
identically zero.
\end{remark}

\section{A model of Afraimovich and Shilnikov}
\label{s:modelas}

In this section we study a model introduced by Afraimovich and
Shilnikov in~\cite{AvSl1977}. See also~\cite{AvHsb2003}. This
simple model allows us to illustrate the steps of the proof of the main theorem
without needing to deal with technical
complexity. The return maps of Afraimovich and Shilnikov are
derived in Section \ref{s3.1a}. In Section \ref{s3.2a} we prove
that these return maps are rank one maps in the sense of \cite{WqYls2001} and
\cite{WqYls2008}.

\subsection{Derivation of return maps}\label{s3.1a}
We begin by describing an unperturbed system of differential
equations. Let $f : \RR^{2} \to \RR$ and $g : \RR^{2} \to \RR$ be
$C^{\infty}$ functions and let $\al$, $\be \in \RR$ satisfy $0 <
\be < \al$. Define
\begin{equation}\label{e:asun}
\left\{
\begin{aligned}
\tdf{x}{t} &= -\al x + f(x,y)\\
\tdf{y}{t} &= \be y + g(x,y).
\end{aligned}
\right.
\end{equation}
We assume that the functions $f$ and $g$ satisfy $f(x,y) = g(x,y)
= 0$ for all $(x,y) \in B(\mathbf{0}, 2 \ve)$ where $0 < \ve < 1$.
This means that equation~\pref{e:asun} is linear in a neighborhood
of $\mathbf{0}$. We also assume that equation~\pref{e:asun} admits
a homoclinic solution $\ell = \{\ell(t) :
t \in \RR \}$ containing the segments $\{ (0,y) : 0 < y < 2 \ve \}$
and $\{ (x,0) : 0 < x < 2 \ve \}$.

Let $\Sbb^{1} = [0, 2 \pi)$ denote the unit circle and let $p, q :
\RR^{2} \times \Sbb^{1} \to \RR$ be $C^{\infty}$ functions such
that $p=q=0$ on $B (\mathbf{0}, 2 \ve) \times \Sbb^{1}$. We now
introduce the perturbed system
\begin{equation}\label{e:asfull}
\left\{
\begin{aligned}
\tdf{x}{t} &= \al x + f(x,y) + \mu p(x,y,\thet)\\
\tdf{y}{t} &= \be y + g(x,y) + \mu q(x,y,\thet)\\
\tdf{\thet}{t} &= \om.
\end{aligned}
\right.
\end{equation}
Here $\om \in \RR$ is the frequency of the forcing functions and
$\mu> 0$ represents the strength of the perturbation. We
assume that $\mu$ and $\ve$ satisfy $0 \leqs \mu \ll \ve < 1$.

The orbit $\ga = \{ (0,0,\thet) : \thet \in \Sbb^{1} \}$ is a hyperbolic
periodic orbit of equation \pref{e:asfull} for all $\mu$.  For
$\mu = 0$, $\Ga = \ell \times \Sbb^{1}$ is the stable manifold and
the unstable manifold of $\ga$. We define the Poincar\'{e}
sections
\begin{align*}
\Si^{-} &= \{ (x,y,\thet) : 0 \leqs x \leqs C_{1} \mu,  \; y = \ve, \;
\thet \in \Sbb^{1} \}\\
\Si^{+} &= \{ (x,y,\thet) : x = \ve, \; C_{2}^{-1} \mu \leqs y
\leqs C_{2} \mu, \; \thet \in \Sbb^{1} \}
\end{align*}
where $\mu \in [0, \mu_{0}]$, $C_{1} > 0$ is such that $C_{1}
\mu_{0} \ll \ve$, and $C_{2}$ is suitably chosen.  We study a
situation in which one can define flow-induced maps $\mscr{M} :
\Si^{-} \to \Si^{+}$ and $\mscr{N}: \Si^{+} \to \Si^{-}$ (see
Section \ref{s0}B and Fig. 3). The composition $\mscr{N} \circ
\mscr{M}$ produces a one-parameter family $\{ \mscr{F}_{\mu} =
\mscr{N} \circ \mscr{M} : \mu \in [0, \mu_{0}] \}$ of maps from
$\Si^{-}$ to $\Si^{-}$.

\smallskip

\noindent {\bf The map {\mathversion{bold} $\mscr{N} : \Si^{+} \to
\Si^{-}$}.} \ The flow from $\Si^{+}$ to $\Si^{-}$ is defined by
the differential equations
\begin{align}
\sublabon{equation}
\tdf{x}{t} &= -\al x
\label{e:Na}\\
\tdf{y}{t} &= \be y
\label{e:Nb}\\
\tdf{\thet}{t} &= \om.
\label{e:Nc}
\end{align}
\sublaboff{equation} Let $(\ve, \hat{y}, \hat{\thet}) \in
\Si^{+}$.  Let $T(\hat{y})$ denote the time at which the orbit
emanating from $(\ve, \hat{y}, \hat{\thet})$ intersects $\Si^{-}$.
Write $\mscr{N} (\ve, \hat{y}, \hat{\thet}) = (x_{1}, \ve,
\thet_{1})$. Integrating~\pref{e:Nb}, we have $\ve = e^{\be
T(\hat{y})} \hat{y}$, so $T(\hat{y}) = \frac{1}{\be} \log (\ve
\hat{y}^{-1})$.  Integrating~\pref{e:Na} yields $x_{1} = e^{-\al
T(\hat{y})} \ve = \ve^{1 - \tfrac{\al}{\be}}
\hat{y}^{\tfrac{\al}{\be}}$.  The local map $\mscr{N}$ is
therefore given by
\begin{equation}\label{e:Nfin}
\left\{
\begin{aligned}
x_{1} &= \ve^{1 - \tfrac{\al}{\be}} \hat{y}^{\tfrac{\al}{\be}}\\
\thet_{1} &= \hat{\thet} + \frac{\om}{\be} \log (\ve
\hat{y}^{-1}).
\end{aligned}
\right.
\end{equation}

\smallskip

\noindent {\bf The map {\mathversion{bold} $\mscr{M} : \Si^{-} \to
\Si^{+}$}.} \ Let $(x_{0}, \ve, \thet_{0}) \in \Si^{-}$.  Write
$\mscr{M} (x_{0}, \ve, \thet_{0}) = (\ve, \hat{y}, \hat{\thet})$.
We assume that for $\mu \in [0, \mu_{0}]$,
\begin{align*}
\hat{y} &= \la x_{0} + \mu \vp (x_{0}, \thet_{0})\\
\hat{\thet} &= \thet_{0} + \xi_{1} + \mu \psi (x_{0}, \thet_{0}).
\end{align*}
Here $0 < \la < 1$ and $\xi_{1} > 0$ are fixed.  The functions $\vp$
and $\psi$ are $C^{\infty}$ functions on $\Si^{-}$.  We assume that
$\vp (x_{0}, \thet_{0}) > 0$ for all $(x_{0}, \thet_{0}) \in \Si^{-}$.
This ensures that the second scenario of Fig. 1, namely the scenario
in which the stable and unstable manifolds are pulled apart by the
periodic forcing. More precisely, we assume that $p$ and $q$ are such
that
\begin{align*}
\psi (x_{0}, \thet_{0}) &= \xi_{2}\\
\vp (x_{0}, \thet_{0}) &= B (1 + A \sin \thet_{0}).
\end{align*}
Here $\xi_{2} \in \RR$, $B > 0$, and $0 < A < 1$. The global map $\mscr{M}$ is
therefore given by
\begin{equation}\label{e:Mfin}
\left\{
\begin{aligned}
\hat{y} &= \la x_{0} + \mu B (1 + A \sin \thet_{0})\\
\hat{\thet} &= \thet_{0} + \xi_{1} + \mu \xi_{2}.
\end{aligned}
\right.
\end{equation}
Let us not worry about the viability of these assumptions, enduring
for the moment the possibility that, at worst, no differential
equation satisfies all of our assumptions.

\smallskip

\noindent{\bf The map {\mathversion{bold} $\mscr{F}_{\mu} =
\mscr{N} \circ \mscr{M} : \Si^{-} \to \Si^{-}$}.} \ Let $(x_{0},
\ve, \thet_{0}) \in \Si^{-}$.  Computing $\mscr{F}_{\mu} (x_{0},
\ve, \thet_{0}) = (x_{1}, \ve, \thet_{1})$ using~\pref{e:Nfin}
and~\pref{e:Mfin}, we have
\begin{align*}
x_{1} &= \ve^{1 - \tfrac{\al}{\be}} \big[ \la x_{0} +
\mu B (1 + A \sin (\thet_{0}) \big]^{\tfrac{\al}{\be}}\\
\thet_{1} &= \thet_{0} + \xi_{1} + \mu \xi_{2}
+ \frac{\om}{\be} \log \left( \frac{\ve}{\la x_{0} +
\mu B (1 + A \sin (\thet_{0}))} \right).
\end{align*}
Using the spatial rescaling $x \mapsto \mu X$, we obtain
\begin{align}
\sublabon{equation}
X_{1} &= \ve^{1 - \tfrac{\al}{\be}} \mu^{\tfrac{\al}{\be} - 1} \big[ \la X_{0}
+ B (1 + A \sin (\thet_{0}) \big]^{\tfrac{\al}{\be}}
\label{e:FmuX}\\
\thet_{1} &= \thet_{0} + \xi_{1} + \mu \xi_{2}
+ \frac{\om}{\be} \log \left( \frac{\ve \mu^{-1}}{\la X_{0}
+ B (1 + A \sin (\thet_{0}))} \right).
\label{e:Fmuthet}
\end{align}
\sublaboff{equation}

Using this formula for $\mscr{F}_{\mu}$, Afraimovich and Shil'nikov
~\cite{AvSl1977, AvHsb2003} conclude that $\mscr{F}_{\mu}$ has a
horseshoe for large $\omega$.

\subsection{Theory of rank one attractors} \label{s3.2a}
In this subsection we first introduce admissible rank one maps following~\cite{WqYls2008}
and we then prove that $\{ \mscr{F}_{\mu} \}$ is an admissible family of rank one maps
using the techniques of~\cite{WqYls2002}.

\smallskip

\noindent {\bfseries \sffamily A.  Misuirewicz maps and admissible
  1D families.} \ The definition of an admissible family of 1D maps is rather long and
technical.  It could therefore present a nontrivial hurdle for the reader. We feel
obligated to present this definition for completeness.  Readers wishing to skip the
material on admissible 1D families can safely jump to Proposition~\ref{prop2-s4.2}.
Proposition~\ref{prop2-s4.2} contains the only result from the 1D aspect of rank one
theory that we need for the results of this paper.

We start with Misuirewicz maps. For $f \in C^{2} (\Sbb^{1}, \Sbb^1)$, let $C= C(f) =
\{f'=0\}$ denote the critical set of $f$ and let $C_\delta$ denote the
$\delta$-neighborhood of $C$ in $\Sbb^1$. For $x \in \Sbb^1$, let $d(x,C) = \min_{\hat x
  \in C} |x-\hat x|$.

\begin{definition}
\label{def-1.1} We say that $f \in C^2(\Sbb^1, \Sbb^1)$ is a
Misuirewicz map and we write $f \in \mscr{E}$ if the following
hold for some $\delta_0>0$.

\begin{enumerate}[(1)]
\item
\label{d:mis1}
{\mathversion{bold} $\text{Outside of } C_{\de_{0}}.$}
There exist
$\lambda_0 > 0$, $M_0 \in {\mathbb Z}^+$, and $0< c_0 \leq 1$ such that
\begin{enumerate}[(a)]
\item
\label{d:mis1a}
for all $n \geq M_0$, if $x, f(x), \cdots, f^{n-1}(x) \not \in
C_{\delta_0}$, then $|(f^n)'(x)| \geqs e^{\lambda_0 n}$;
\item
\label{d:mis1b}
if $x, f(x), \cdots, f^{n-1}(x) \not \in C_{\delta_0}$ and
$f^n(x) \in C_{\delta_0}$ for any $n$, then $|(f^n)'(x)| \geq c_0
e^{\lambda_0 n}$.
\end{enumerate}
\item
\label{d:mis2}
{\mathversion{bold} $\text{Inside } C_{\de_{0}}.$}
\begin{enumerate}[(a)]
\item
\label{d:mis2a}
We have $f''(x) \neq 0$ for all $x \in C_{\delta_0}$.
\item
\label{d:mis2b}
For all $\hat x \in C$ and $n > 0$, $d(f^n(\hat x), C) \geq
\delta_0$.
\item
\label{d:mis2c}
For all $x \in C_{\delta_0}\setminus C$, there exists
$p_0(x)>0$ such that $f^j(x) \not \in C_{\delta_0}$ for all
$j<p_0(x)$ and $|(f^{p_0(x)})'(x)| \geq$ $c_0^{-1} e^{\frac{1}{3}
\lambda_0 p_0(x)}$.
\end{enumerate}
\end{enumerate}
\end{definition}

We remark that Misurewicz maps are among the simplest maps with nonuniform expansion. The
phase space is divided into two regions, $C_{\delta_0}$ and $\Sbb^1 \setminus
C_{\delta_0}$. Condition~\pref{d:mis1} in Definition~\ref{def-1.1} says that on $\Sbb^1
\setminus C_{\delta_0}$, $f$ is essentially uniformly expanding. Condition~\pref{d:mis2c}
says that for $x \in C_{\delta_0} \setminus C$, even though $|f'(x)|$ is small, the orbit
of $x$ does not return to $C_{\delta_0}$ again until its derivative has regained a
definite amount of exponential growth.  In particular, if $n$ is the first return time of
$x \in C_{\delta_0}$ to $C_{\delta_0}$, then $|(f^n)'(x)| \geq c_{0}^{-1} e^{\frac{1}{3}
  \lambda_0 n}$.

We now define {\bfseries \itshape admissible families} of 1D maps. Let $F : \Sbb^1 \times
[a_{1}, a_{2}] \to \Sbb^1$ be a $C^{2}$ map. The map $F$ defines a one-parameter family
$\{ f_{a} \in C^{2} (\Sbb^1, \Sbb^1) : a \in [a_{1}, a_{2}] \}$ via $f_{a} (x) =
F(x,a)$. We assume that there exists $a^{*} \in (a_{1}, a_{2})$ such that $f_{a^{*}} \in
\mscr{E}$. For each $c \in C(f_{a^{*}})$, there exists a continuation $c(a) \in C(f_{a})$
provided $a$ is sufficiently close to $a^{*}$.

Let $C(f_{a^{*}}) = \{ c^{(1)} (a^{*}), \ldots, c^{(q)} (a^{*}) \}$, where $c^{(i)}
(a^{*}) < c^{(i+1)} (a^{*})$ for $1 \leqslant i \leqslant q-1$.  For $c(a^{*}) \in
C(f_{a^{*}})$, we define $\beta(a^{*}) = f_{a^{*}} (c(a^{*}))$.  For all parameters $a$
sufficiently close to $a^{*}$, there exists a unique continuation $\beta(a)$ of
$\beta(a^{*})$ such that the orbits
\begin{equation*}
\{ f_{a^{*}}^{n} (\beta(a^{*})) : n \geqslant 0 \} \text{ and } \{
f_{a}^{n} (\beta(a)) : n \geqslant 0 \}
\end{equation*}
have the same itineraries with respect to the partitions of $\Sbb^1$ induced by
$C(f_{a^{*}})$ and $C(f_{a})$.  This means that for all $n \geqslant 0$, $f_{a^{*}}^{n}
(\beta(a^{*})) \in (c^{(j)} (a^{*}), c^{(j+1)} (a^{*}))$ if and only if $f_{a}^{n}
(\beta(a)) \in (c^{(j)} (a), c^{(j+1)} (a))$ (here $\psup{c}{q+1} =
\psup{c}{1}$). Moreover, the map $a \mapsto \beta(a)$ is differentiable (see Proposition
4.1 in~\cite{WqYls2006}).

\begin{definition}\label{D:admissible}
Let $F : \Sbb^1 \times [a_{1}, a_{2}] \to \Sbb^1$ be a $C^{2}$
map. The associated one-parameter family $\{ f_{a} : a \in [a_{1},
a_{2}] \}$ is {\bfseries \itshape admissible} if
\begin{enumerate}[(1)]
\item
there exists $a^{*} \in (a_{1}, a_{2})$ such that $f_{a^{*}}
\in \mscr{E}$;
\item
for all $c \in C(f_{a^{*}})$, we have
\begin{equation}
\xi (c) = \left. \frac{d}{da} (f_{a} (c (a)) - \beta(a))
\right|_{a=a^{*}} \neq 0.
\end{equation}
\end{enumerate}
\end{definition}

The next proposition contains all that we need from the 1D aspect of rank one theory for
this paper.

\begin{proposition}[\cite{WqYls2002, WqYls2003, LkWqYls2008}]\label{prop2-s4.2}
Let $\Psi(\theta): \Sbb^1 \to {\mathbb R}$ be a $C^3$ function
with non-degenerate critical points and let $\Phi(\theta, a): \Sbb^1
\times [a_0, a_1] \to {\mathbb R}$ be such that
\begin{equation*}
\|\Phi(\theta, a)\|_{C^3 (\mbb{S}^{1} \times [a_{0}, a_{1}])} < \frac{1}{100}.
\end{equation*}
We define a one parameter family of circle maps $\{f_a: a \in [0,
2 \pi] \}$ by
\begin{equation*}
f_a(\theta) = \theta + \phi(\theta, a) + a + \mscr{K}
\Psi(\theta)
\end{equation*}
where $\mscr{K}$ is a constant.  There exists $K$, determined by $\Psi$ alone, such that
if $\mscr{K} > K$, then $\{ f_a \}$ is an admissible family of 1D maps.
\end{proposition}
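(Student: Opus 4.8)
\ The plan is to recognize $\{f_a\}$ as a family of the type already treated for periodically kicked limit cycles and Hopf bifurcations in~\cite{WqYls2002, WqYls2003, LkWqYls2008} --- there $\mscr{K}$ plays the role of the large ``shear'' parameter --- and to run the one-dimensional machinery of those papers, checking its hypotheses for the present family. Write $\hat C=\{\Psi'=0\}=\{\hat\theta_1,\dots,\hat\theta_q\}$; since $\Psi$ is $C^3$ with non-degenerate critical points this set is finite, $|\Psi''|$ is bounded below on a fixed neighbourhood $V$ of $\hat C$, and $|\Psi'|$ is bounded below on $\Sbb^1\setminus V$, all bounds ``universal'' (depending on $\Psi$ alone). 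Since $f_a'(\theta)=1+\partial_\theta\Phi+\mscr{K}\Psi'(\theta)$ and $\|\partial_\theta\Phi\|<\tfrac1{100}$, for $\mscr{K}$ large each zero of $\Psi'$ produces a single critical point $c_i(a)$ of $f_a$ with $|c_i(a)-\hat\theta_i|=O(\mscr{K}^{-1})$ and $|c_i'(a)|=O(\mscr{K}^{-1})$, so $C(f_a)=\{c_1(a),\dots,c_q(a)\}$. Off a small fixed neighbourhood $C_{\delta_0}$ of $C(f_a)$ one has $|f_a'|\geq\Lambda:=\mscr{K}\eta-2$ for a universal $\eta>0$, which exceeds any prescribed $e^{\lambda_0}$ once $\mscr{K}$ is large; this yields the expansion estimate (1) of Definition~\ref{def-1.1} for every $a$, with $M_0=1$, $c_0=1$ and $\lambda_0$ as large as we like. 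On $C_{\delta_0}$, $f_a''=\partial_\theta^2\Phi+\mscr{K}\Psi''$ is bounded below, giving non-flatness (2a); and for $d(x,C(f_a))=d\leq\delta_0$ the quadratic estimates $|f_a'(x)|\asymp\mscr{K}d$ and $|f_a(x)-f_a(c_i(a))|\asymp\mscr{K}d^2$ hold with universal constants. None of this requires a distinguished parameter.

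The heart of the matter is to produce a Misurewicz parameter $a^*$, i.e.\ to verify the non-recurrence condition (2b) and the recovery-of-expansion condition (2c). Let $\beta_i(a)=f_a(c_i(a))$. Because $\partial_\theta f_a(c_i(a))=0$, only the explicit term $a$ and $\partial_a\Phi$ contribute to the derivative: $\tfrac{d}{da}\beta_i(a)=1+\partial_a\Phi(c_i(a),a)\in(\tfrac{99}{100},\tfrac{101}{100})$, so each critical value moves monotonically and sweeps $\Sbb^1$ as $a$ traverses $[0,2\pi]$. I would then invoke the parameter-exclusion argument of Benedicks--Carleson/Jakobson type carried out in~\cite{WqYls2002, WqYls2003, LkWqYls2008}: beginning with $[0,2\pi]$, inductively delete the small parameter sets on which some critical orbit $\{f_a^n(c_i(a))\}$ violates the non-recurrence/growth requirements before regaining a definite amount of exponential expansion; the near-unit speed of $\beta_i$, together with the expansion $\Lambda$ and the usual distortion control, makes it possible to bound the measure removed at each stage by a summable quantity, so a Misurewicz parameter $a^*$ survives (shrinking $\delta_0$ at the end if necessary). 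At $a^*$, (2b) holds by construction, and (2c) follows from the standard binding argument: an orbit issuing from $x\in C_{\delta_0}\setminus C(f_{a^*})$ shadows the (now non-recurrent, uniformly expanding) orbit of the nearby $c_i(a^*)$, stays outside $C_{\delta_0}$ during the binding period, and exits having regained $|(f_{a^*}^{p_0})'(x)|\geq c_0^{-1}e^{\frac13\lambda_0 p_0}$, the loss of the factor $\tfrac13$ being afforded by the universally large $\lambda_0$ via the quadratic estimate above. This step --- manufacturing the Misurewicz parameter --- is the main obstacle; everything else is bookkeeping.

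Finally I would check the transversality condition (2) of Definition~\ref{D:admissible} at $a^*$. Fix $c=c_i(a^*)$. By definition $\beta(a)$ is the point whose $f_a$-orbit shares the itinerary of the orbit of $\beta(a^*)=f_{a^*}(c)$ relative to $C(f_a)$; since $a^*$ is Misurewicz these itinerary intervals lie at distance $\geq\delta_0$ from the critical set, so on each of them $f_a$ is a diffeomorphism with derivative $\geq\Lambda$. Writing $h_n(a,\cdot)$ for the corresponding inverse branches, $\beta(a)=h_0(a,h_1(a,h_2(a,\cdots)))$ is an infinite composition of maps contracting the $\theta$-variable by $\leq\Lambda^{-1}$, with $|\partial_a h_n|=|(\partial_a f_a)/(\partial_\theta f_a)|\circ h_n\leq\tfrac{101}{100}\Lambda^{-1}$; differentiating the infinite composition gives $|\beta'(a^*)|\leq\tfrac{101}{100}\Lambda^{-1}(1-\Lambda^{-1})^{-1}<\tfrac12$ for $\mscr{K}$ large. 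Since $\tfrac{d}{da}f_a(c(a))|_{a^*}=1+\partial_a\Phi(c,a^*)\geq\tfrac{99}{100}$, exactly as in the previous paragraph,
\[
\xi(c)=\frac{d}{da}\bigl(f_a(c(a))-\beta(a)\bigr)\Big|_{a=a^*}\geq\frac{99}{100}-\frac12>0
\]
for every $c\in C(f_{a^*})$, which is the required transversality. Combined with the preceding step this exhibits $\{f_a\}$ as an admissible family once $\mscr{K}>K$, with threshold $K$ determined by $\Psi$ alone (through $q$ and the universal lower bounds on $|\Psi'|$ and $|\Psi''|$, together with the auxiliary choices of $\delta_0$ and $\lambda_0$).
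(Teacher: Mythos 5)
Your proposal is correct and follows the same route as the paper, which simply cites \cite{WqYls2002, WqYls2003, LkWqYls2008} and notes that the proof of the special case $\Phi \equiv 0$ in \cite{WqYls2002} extends to the general statement; your sketch is a faithful reconstruction of that argument (critical points near those of $\Psi$, uniform expansion of order $\mscr{K}$ off $C_{\delta_0}$, a summable parameter exclusion using the near--unit speed of the critical values to produce the Misurewicz parameter $a^{*}$, and transversality via the contraction of the inverse branches defining $\beta(a)$). The only cosmetic point is that the exclusion needed here is the simple one--shot removal of all returns of critical orbits to $C_{\delta_0}$ rather than a full Benedicks--Carleson induction, but this does not affect correctness.
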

The special case of this proposition in which $\Phi(\theta, a) = 0$ was first proved in
~\cite{WqYls2002}. That proof can easily be extended to prove Proposition
~\ref{prop2-s4.2}. See also Proposition 2.1 in ~\cite{WqYls2003} and Appendix C in
~\cite{LkWqYls2008}.

\smallskip

\noindent {\bfseries \sffamily B.  Admissible families of rank one
  maps.} \ We now move to the 2D part of the setting of~\cite{WqYls2001}
and~\cite{WqYls2008}.  Let $I$ be an interval.  Let $B_{0} \subset \mbb{R}$ be a set with
a limit point at $0$.  A 2-parameter $C^3$ family $\{ F_{a, b}(X, \theta) : a \in [a_{0},
a_{1}], \; b \in B_{0} \}$ of 2D diffoemorphisms defined on $\Si= I \times \Sbb^1$ is an
admissible rank one family if the following hold.

\smallskip

\refstepcounter{cscan} \label{c1} \bpref{c1} There exists a $C^2$
function $F_{a, 0} (X, \theta)$ of  $(a, X, \theta)$ such that, as
$b \to 0$,
\begin{equation*}
\|F_{a, b}(X, \theta) - (0, F_{a, 0} (X, \theta))\|_{C^3 ([a_{0},
a_{1}] \times \Si)} \to 0.
\end{equation*}

\refstepcounter{cscan} \label{c2} \bpref{c2} $\{ f_a(\theta) =
F_{a, 0} (0, \theta) : a \in [a_{0}, a_{1}] \}$ is an admissible
1D family.

\refstepcounter{cscan} \label{c3}
\bpref{c3} For all $a \in
[a_{0}, a_{1}]$, at the critical points of the 1D map
$f_a(\theta)$ we have
\begin{equation*}
\left. \frac{\partial}{\partial X}F_{a, 0} (X, \theta) \right|_{X = 0} \neq 0.
\end{equation*}

\smallskip

The following is the main result of~\cite{WqYls2001} and
~\cite{WqYls2008} for a given admissible rank one family $F_{a, b}$
of 2D maps.

\begin{proposition}[\cite{WqYls2001, WqYls2008}] \label{prop3-s3.2}
Let $F_{a, b}: \Si \to
\Si$ be an admissible rank one family.  There exists $\hat{b} > 0$ such that for all $|b|
< \hat{b}$, there exists a set $\Delta_b$ of values of $a$ with positive Lebesgue measure
such that for $a \in \Delta_b$, $F_{a, b}$ admits an ergodic SRB measure $\nu$. If we also
have $\lambda_0 > \ln 10$, where $\lambda_0$ is as in Definition \ref{def-1.1}, then $\nu$
is the only ergodic SRB measure\footnote{This is proved in~\cite{WqYls2002}.} that $F_{a,
  b}$ admits on $\Si$.
\end{proposition}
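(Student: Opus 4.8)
The plan is to follow the parameter-selection scheme of Benedicks and Carleson~\cite{BmCl1991}, adapted to the abstract rank one setting in~\cite{WqYls2001, WqYls2008}; I will only indicate the architecture of the argument. Fix $b$ with $|b|$ small. By condition (C1) the map $F_{a, b}$ is $C^3$-close to the singular map $(X, \theta) \mapsto (0, F_{a, 0}(X, \theta))$, whose image lies in the curve $\{ X = 0 \}$ carrying the 1D dynamics $f_a$. Since $\{ f_a \}$ is admissible (condition (C2)), $f_{a^*}$ is a Misurewicz map, so on $\Sbb^1 \setminus C_{\delta_0}$ the 1D map is essentially uniformly expanding and the critical orbits stay $\delta_0$-bounded away from $C(f_{a^*})$. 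For $b \neq 0$ the curve $\{ X = 0 \}$ is no longer invariant; one first builds, for each $a$, a \emph{critical set} $\mscr{C}_{a, b}$ --- a finite collection of points lying on the most-contracted directions of the iterates of a suitable admissible curve --- together with bound/recovery estimates that mimic, to leading order in $b$, the 1D picture near $C(f_a)$. Condition (C3), the nonvanishing of $\partial_X F_{a, 0}$ at the 1D critical points, is what guarantees that the 2D map folds transversally to $\{ X = 0 \}$, so that the critical points behave like nondegenerate turning points.

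Next I would run the inductive parameter exclusion. Starting from a small interval around $a^*$, at step $n$ one deletes the parameters for which some critical orbit violates, up to time $n$, one of the Benedicks--Carleson-type conditions: the Collet--Eckmann bound $|(f_a^n)'(f_a(c(a)))| \geq e^{\lambda_0 n / 3}$ (or its 2D analogue measured along the dominated splitting), the basic assumption that critical orbits keep a slowly shrinking distance from $\mscr{C}_{a, b}$, and the ``free'' / derivative-recovery assumption controlling the frequency of deep returns. The quantitative input that keeps the surviving set large is the transversality $\xi(c) \neq 0$ from Definition~\ref{D:admissible}: it forces the parameter derivative of the critical orbit to dominate its phase derivative (the ``$C^1$ comparison lemma''), so that each deletion removes only an exponentially small proportion of the current interval, and a Borel--Cantelli / density argument yields a set $\Delta_b$ of positive Lebesgue measure --- indeed of density tending to $1$ at $a^*$ as $b \to 0$ --- whose critical orbits satisfy all the conditions forever.

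For $a \in \Delta_b$ one then constructs the SRB measure. The good combinatorics furnish a hyperbolic product structure: a positive-measure set of points whose forward orbits enjoy long stretches of exponential expansion along a dominated unstable cone, and one obtains SRB measures as weak-$*$ limits of Ces\`aro averages of Lebesgue measure carried on unstable curves, whose absolute continuity along unstable leaves follows from bounded distortion controlled by the recovery estimates; ergodicity comes from the topological transitivity of the attractor established in the same framework, and Lebesgue-a.e.\ point of $\Si$ is generic because the basin has full measure. Finally, when $\lambda_0 > \ln 10$ the expansion is strong enough that the attractor supports no other ergodic SRB measure, which is the additional uniqueness statement proved in~\cite{WqYls2002}. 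The main obstacle is, as always in this circle of ideas, the inductive parameter exclusion together with the matching distortion and recovery estimates for the $b$-perturbed critical set; once the good parameters are in hand, the SRB construction, ergodicity, and uniqueness are comparatively soft.
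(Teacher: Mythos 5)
This proposition is not proved in the paper at all: it is imported from~\cite{WqYls2001, WqYls2008} (with the uniqueness clause credited to~\cite{WqYls2002}), so there is no in-paper argument to compare yours against. Judged on its own terms, your proposal is a reasonably faithful road map of the Wang--Young proof --- singular limit, construction of a critical set along most-contracted directions, Benedicks--Carleson-style parameter exclusion driven by the transversality $\xi(c) \neq 0$ of Definition~\ref{D:admissible}, and SRB measures obtained as limits of pushed-forward Lebesgue measure on unstable curves --- but it is a road map, not a proof. Every genuinely hard step is named and then deferred: the inductive definition and $b$-dependent correction of the critical set, the derivative-recovery and bounded-distortion estimates, the comparison between parameter and phase derivatives, and the measure estimate for the excluded parameters. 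These constitute essentially all of the (very long) arguments of~\cite{WqYls2001, WqYls2008}, and nothing in your sketch can be checked or falsified at this resolution. Citing the result, as the paper does, is legitimate; presenting this outline as a proof is not.

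Two smaller inaccuracies. First, your closing claim that Lebesgue-almost every point of $\Si$ is generic ``because the basin has full measure'' is not part of this proposition: that is Proposition~\ref{prop4-s3.2}, which requires the additional global distortion hypothesis (C4) and does not follow from (C1)--(C3) alone. Second, the assertion that $\Delta_b$ has density tending to $1$ at $a^*$ as $b \to 0$ is stronger than what the cited theorem delivers (positive Lebesgue measure, and in the application of this paper only positive \emph{lower} density at the limiting parameter); you should not build on it.
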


More is true if the global distortion bound~\bpref{c4} holds.

\smallskip

\refstepcounter{cscan} \label{c4}
\bpref{c4} There exists $C > 0$
such that for all $a \in [a_{0}, a_{1}]$, $b \in B_{0}$, $(X,
\thet) \in \Si$, and $(X', \thet') \in \Si$, we have
\begin{equation*}
\left|\frac{\det{D F_{a, b}(X, \theta)}}{\det{D F_{a, b}(X',
\theta')}}\right| < C.
\end{equation*}

\smallskip

\begin{proposition}[\cite{WqYls2001}]\label{prop4-s3.2}
  Let $F_{a,b}$ be an admissible rank one family satisfying~\bpref{c4} and suppose that
  $\la_{0} > \ln 10$, where $\la_{0}$ is as in Definition~\ref{def-1.1}.  Then for all
  $|b| < \hat{b}$ and $a \in \De_{b}$, Lebesgue almost every point in $\Si$ is generic
  with respect to the unique ergodic SRB measure on $\Si$.
\end{proposition}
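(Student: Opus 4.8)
The plan is to deduce the statement from the structural results established in the course of proving Proposition~\ref{prop3-s3.2}. Recall that for $|b| < \hat b$ and $a \in \Delta_b$ the map $F = F_{a,b}$ carries a hyperbolic structure of the type constructed in~\cite{WqYls2001, WqYls2008}: there is a compact region $R \subset \Sigma$ supporting a hyperbolic product set, the first-return map to $R$ admits a countable Markov partition with exponentially decaying return-time statistics, and the SRB measure $\nu$ is the measure of the associated tower. The distortion bound~\ref{c4} furnishes a constant $C$ with $|\det DF^{n}(z)| \leqslant C \, |\det DF^{n}(z')|$ whenever $z, z'$ lie on a common admissible curve; this is what will let us compare the $2$D Lebesgue measure $m$ on $\Sigma$ with the conditional, essentially $1$D, densities of $\nu$ along unstable leaves.

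Write $B(\nu) = \{ z \in \Sigma : \frac{1}{n}\sum_{k=0}^{n-1}\delta_{F^{k} z} \to \nu \}$ for the set of points generic with respect to $\nu$ (weak-$*$ convergence). I would show $m(\Sigma \setminus B(\nu)) = 0$ in two steps. First, a Hopf-type argument gives $m(B(\nu)) > 0$: since $\nu$ is ergodic (Proposition~\ref{prop3-s3.2}) and its conditional measures on unstable manifolds are absolutely continuous, applying the Birkhoff ergodic theorem on a positive-$\nu$-measure union of local unstable leaves and then using the absolute continuity produces a positive-$m$-measure set of points whose forward empirical averages converge to $\nu$. Second --- and this is the substantive point --- I would show that $m$-almost every $z \in \Sigma$ is eventually captured by the hyperbolic structure, i.e.\ the set $G$ of points whose forward orbit lands in $R$ and thereafter remains in the controlled regime of the inductive construction has full $m$-measure. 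The complement $\Sigma \setminus G$ is covered by the nested family of ``bad'' sets produced at successive stages of the construction in~\cite{WqYls2001}; the $m$-measure of the stage-$n$ bad set decays geometrically, and summability of these measures is exactly where the hypothesis $\lambda_{0} > \ln 10$ enters, the expansion factor $e^{\lambda_{0}}$ having to dominate the folding and branching complexity of the return map. Hence $m(\Sigma \setminus G) = 0$.

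Finally I would identify the limit. For $z \in G$ the forward orbit is admissible in the sense of the rank-one construction, hence has a positive Lyapunov exponent and a well-defined limiting empirical distribution; the distortion bound~\ref{c4}, together with the fact that such orbits are forward-asymptotic to the attractor along the unstable direction, forces this limit to be an ergodic SRB measure, and by the uniqueness clause of Proposition~\ref{prop3-s3.2} (again using $\lambda_{0} > \ln 10$) it must equal $\nu$. Thus $G \subseteq B(\nu)$ up to an $m$-null set, and combined with $m(\Sigma \setminus G) = 0$ this yields $m(\Sigma \setminus B(\nu)) = 0$, the assertion.

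The step I expect to be the main obstacle is the capture estimate $m(\Sigma \setminus G) = 0$. Unlike the Hopf argument and the bookkeeping with~\ref{c4}, this is not a soft statement: it requires re-examining the measure estimates of the inductive horseshoe construction of~\cite{WqYls2001} with $2$D Lebesgue measure on the phase space in place of the parameter measure, and it is precisely for the geometric decay of the stage-$n$ bad sets that $\lambda_{0} > \ln 10$ is needed. Everything else should follow fairly directly from the machinery already in place in~\cite{WqYls2001} and~\cite{WqYls2008}.
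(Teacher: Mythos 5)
The first thing to note is that the paper does not prove this proposition at all: it is imported verbatim from \cite{WqYls2001} (see also the footnote to Proposition~\ref{prop3-s3.2}, which attributes the uniqueness statement to \cite{WqYls2002}), and the citation is the entire ``proof.'' So your proposal is not an alternative to an argument in this paper; it is an attempt to reconstruct the proof that lives in the cited references. Judged on those terms, the overall architecture you describe --- a Hopf-type argument giving $m(B(\nu))>0$ from ergodicity plus absolute continuity of the conditional measures on unstable leaves, followed by a ``capture'' statement that Lebesgue-a.e.\ point enters the controlled part of the construction, followed by identification of the limit via uniqueness --- has the right shape and matches the strategy of \cite{WqYls2001, WqYls2002}.

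However, as a proof the proposal has a genuine gap, and it is the one you flag yourself: the capture estimate $m(\Sigma \setminus G) = 0$ is essentially the entire content of the theorem, and you give no argument for it beyond asserting that the stage-$n$ bad sets have geometrically decaying Lebesgue measure. That assertion is not obviously true and is not how the cited proof goes: the inductive exclusion procedure of \cite{WqYls2001} controls the measure of deleted \emph{parameters}, not of phase-space sets, and converting it into a statement about $2$D Lebesgue measure on $\Sigma$ is precisely the hard analytic work (it proceeds via absolute continuity of the stable foliation of the attractor together with the fact that the attractor attracts all of $\Sigma$). Relatedly, the hypothesis $\lambda_0 > \ln 10$ is not deployed for a Borel--Cantelli bound on bad sets; in this framework it is the condition ensuring the 1D Misiurewicz map is expanding enough that the attractor does not decompose into several pieces, i.e.\ it is what delivers \emph{uniqueness} of the ergodic SRB measure (the footnote to Proposition~\ref{prop3-s3.2} points to \cite{WqYls2002} for exactly this), and without uniqueness your final identification step collapses. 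So the proposal correctly lists the ingredients, but the step it defers is the theorem itself, and the mechanism proposed for that step is not the one that works.
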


\smallskip

\noindent
{\bfseries C.} {\mathversion{bold} $\{ \mscr{F}_{\mu} \}$} {\bfseries is an
  admissible family of rank one maps.} \ We show that $\{ \mscr{F}_{\mu} \}$ satisfies the
hypotheses~\bpref{c1}--\bpref{c4}.  Letting $\mu \to 0$ in~\pref{e:FmuX}, we see that
$X_{1} \to 0$ because $\al > \be$.  However, the term $\frac{\om}{\be} \log (\mu^{-1})
\pmod{2 \pi}$ fails to converge as $\mu \to 0$. The fact that $\thet_{1}$ is computed
modulo $2 \pi$ allows us to introduce the parameter $a$ and thereby obtain a two-parameter
family $\{ F_{a,b} \}$ with a well-defined 1D singular limit.

We regard $p = \log (\mu^{-1})$ as the fundamental parameter
associated with
$\{ \mscr{F}_{\mu} \}$.  Notice that we now have
$p \in [\log (\subsup{\mu}{0}{-1}), \infty)$.  Think of $\mu = e^{-p}$ as a
function of $p$.  Define $\ga : (0, \mu_{0}] \to \RR$ by
\begin{equation*}
\ga (\mu) = \frac{\om}{\be} \log (\mu^{-1}).
\end{equation*}
Let $N \in \NN$ satisfy $\frac{\om}{\be} \log (\subsup{\mu}{0}{-1}) < N$.
Let $(\mu_{n})$ be the decreasing sequence of values of $\mu$ such that
$\ga (\mu_{n}) = N + 2 \pi (n-1)$ for every $n \in \NN$.  We think of $\mu$ as a
measure of dissipation and we therefore set $b_{n} = \mu_{n}$.
For $a \in \Sbb^{1}$ and $n \in \NN$, define
\begin{align*}
\mu (n,a) &= \ga^{-1} (\ga (\mu_{n}) + a)\\
p(n,a) &= \log (\mu (n,a)^{-1}) = \log (\subsup{\mu}{n}{-1})
+ \frac{\be}{\om} a.
\end{align*}
The map $F_{a,b_{n}}$ is defined by $F_{a,b_{n}} =
\mscr{F}_{p(n,a)}$.

The family $\{ F_{a,b_{n}} \}$ has a well-defined singular limit.
As $n \to \infty$, $F_{a,b_{n}}$ converges in the $C^{3}$ topology
to the map $F_{a,0}$ defined by
\begin{align*}
\subasup{F}{a,0}{1} (X_{0}, \thet_{0}) &= 0\\
\subasup{F}{a,0}{2} (X_{0}, \thet_{0}) &= \thet_{0} + \xi_{1} +
\frac{\om}{\be} \log (\ve) + a - \frac{\om}{\be} \log (\la X_{0} +
B (1 + A \sin (\thet_{0}))).
\end{align*}
This proves~\bpref{c1}.

Restricting $\subasup{F}{a,0}{2}$ to the circle $\{ (X_{0},
\thet_{0}): X_{0} = 0 \}$, we obtain the one-parameter family of
circle maps
\begin{equation*}
f_{a} (\thet) = \thet + \xi_{1} + \frac{\om}{\be} \log (\ve) + a -
\frac{\om}{\be} \log (B (1 + A \sin (\thet))).
\end{equation*}
It follows directly from Proposition~\ref{prop2-s4.2} that $f_a$ is an admissible family
of 1D maps provided $\omega \beta^{-1}$ is sufficiently large. This proves~\bpref{c2}.
Hypotheses~\bpref{c3} and~\bpref{c4} follow from direct computation.

We have shown that the family $\{ F_{a, b_{n}} \}$ is an admissible rank one family and
therefore Propositions~\ref{prop3-s3.2} and ~\ref{prop4-s3.2} apply. We conclude that if
$|\om|$ is sufficiently large, then there exists a set $\De_{\om}$ of positive Lebesgue
measure such that for $\mu \in \De_{\om}$, $\mscr{F}_{\mu}$ admits a strange attractor on
$\Si^{-}$ with an ergodic SRB measure $\nu$ and Lebesgue almost every point on $\Si^-$ is
generic with respect to $\nu$. Furthermore, the set $\De_{\om}$ has positive lower
Lebesgue density at $0$, meaning that
\begin{equation*}
\varliminf_{s \to 0^{+}} \frac{|\De_{\om} \cap [0,s]|}{s} > 0
\end{equation*}
where $| \cdot |$ denotes Lebesgue measure.

\section{Standard forms around the homoclinic loop}\label{s2}

In this section we introduce a sequence of coordinate changes to
transform equation ~(\ref{f3-s1.1}) into certain standard forms.
In Section~\ref{s2.1} we work in a sufficiently small neighborhood
$U_{\varepsilon}$ of $(0, 0)$ in the $(x, y)$-plane. In
Section~\ref{s3.1} we work in a small neighborhood around the
entire length of the homoclinic loop $\ell$ outside of
$U_{\frac{1}{4}\varepsilon^2}$. In Section~\ref{s2.3} we define
the Poincar\'e sections $\Sigma^{\pm}$ which we will use to
compute the flow-induced maps. Points on $\Sigma^{\pm}$ are
represented differently by various sets of variables introduced in
Sections~\ref{s2.1} and~\ref{s3.1}. We discuss the issue of
coordinate conversion in Section~\ref{s2.3}.

In the rest of this paper, $\alpha$, $\beta$, $\rho \in [\rho_1,
\rho_2]$ and $\omega > \omega_0$ (it suffices to assume $\om > 0$)
are all regarded as fixed
constants. The size of the neighborhood on which all of the
coordinate transformations in Section~\ref{s2.1} are performed is
determined by a small number $\varepsilon > 0$. The quantity
$\varepsilon$ is also regarded as a fixed constant. We regard
$\mu$ as the only parameter of equation ~(\ref{f3-s1.1}).

\vspace{0.1cm}

\noindent {\bfseries Two small scales.} The quantities $\mu \ll
\varepsilon \ll 1$ represent two small scales of different magnitude.
The quantity $\varepsilon$ represents the size of a small neighborhood of $(x,
y) = (0, 0)$ in which the local analysis of Section~\ref{s2.1} is
valid. Define
\begin{equation*}
  U_{\varepsilon} = \{ (x, y): x^2 + y^2 < 4 \varepsilon^2 \} \text{ and }
  \mscr{U}_{\varepsilon} = U_{\varepsilon} \times \mbb{S}^{1}.
\end{equation*}
Let $L^+$ and $-L^-$ be the respective times at which the homoclinic solution $\ell(t)$
enters $U_{\frac{1}{2}\varepsilon}$ in the positive and negative directions.  The
quantities $L^+$ and $L^-$ are completely determined by $\varepsilon$ and $\ell$. The
parameter $\mu$ ($\mu \ll \varepsilon$) controls the magnitude of the time-periodic
perturbation.

\vspace{0.1cm}

\noindent {\bfseries Notation.} Quantities that are independent of
phase variables, time and $\mu$ are regarded as constants and $K$
is used to denote a generic constant, the precise value of which
is allowed to change from line to line. On occasion, a specific
constant is used in different places. We use subscripts to denote
such constants as $K_0, K_1, \cdots$. We will also distinguish
between constants that depend on $\varepsilon$ and those
that do not by making such dependencies explicit. A constant that
depends on $\varepsilon$ is written as $K(\varepsilon)$. A
constant written as $K$ is independent of $\varepsilon$.

\subsection{Standard form near the fixed point} \label{s2.1}

In this subsection we study equation ~(\ref{f3-s1.1}) in a
sufficiently small neighborhood of $(0, 0)$ in the $(x, y)$-plane.
We introduce a sequence of coordinate changes to transform
equation ~(\ref{f3-s1.1}) into a certain standard form.
Table~\ref{ta:trans} summarizes the purpose of each coordinate transformation.

\begin{center}
\begin{table}[ht]
\caption{Transformations near the fixed point.}
\label{ta:trans}
\begin{tabular}{|c|c|}
\hline
Transformation & Purpose\\
\hline
$(x,y) \to (\xi, \et)$ & linearize the flow defined by~\pref{f1-s1.1} in a neighborhood of $(0,0)$\\
\hline
$(\xi, \et) \to (X,Y)$ & standardize the location of the hyperbolic periodic orbit\\
\hline
$(X,Y) \to (\mathbf{X}, \mathbf{Y})$ & flatten the local invariant manifolds \\
\hline
$(\mathbf{X}, \mathbf{Y}) \to (\mbb{X}, \mbb{Y})$ & rescale by the factor $\mu^{-1}$\\
\hline
\end{tabular}
\end{table}
\end{center}

\vspace{0.2cm}

\refstepcounter{forms} \label{sss:forma} \noindent {\bfseries
\sffamily \ref{sss:forma}. First coordinate change:} $(x, y) \to
(\xi, \eta)$. \ Let $(\xi, \eta)$ be such that
\begin{equation}\label{f1-s2.1a}
\xi = x + q_1(x, y), \quad \eta = y + q_2(x, y)
\end{equation}
where $q_1(x, y)$ and $q_2(x, y)$ are analytic terms of order at least
two in $x$ and $y$. Formula ~(\ref{f1-s2.1a}) defines a
near-identity coordinate transformation $(x, y) \to (\xi, \eta)$,
the inverse of which we write as
\begin{equation}\label{f2-s2.1a}
x = \xi + Q_1(\xi, \eta), \quad y = \eta + Q_2(\xi, \eta).
\end{equation}
\begin{proposition}\label{prop-2.1}
Assume that $\alpha$ and $\beta$ satisfy the
nonresonance condition~\bpref{li:h1}\bpref{li:h1a}.  Then there exists a neighborhood
$U$ of $(0, 0)$, the size of which is completely determined by
equation ~(\ref{f1-s1.1}) and $d_1$ and $d_2$ in~\bpref{li:h1}\bpref{li:h1a}, such that on
$U$ there exists an analytic coordinate transformation
~(\ref{f1-s2.1a}) that transforms equation ~(\ref{f1-s1.1}) into the
linear system
\begin{equation*}
\frac{d \xi}{dt} = - \alpha \xi, \quad \frac{d \eta}{dt} = \beta
\eta.
\end{equation*}
\end{proposition}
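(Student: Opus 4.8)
The statement is the classical Poincaré–Siegel/Sternberg linearization theorem for an analytic planar saddle under the Diophantine non-resonance condition \bpref{li:h1}\bpref{li:h1a}, so the plan is to follow the standard majorant (power series) argument adapted to this two-dimensional setting. First I would recast the problem: writing $f$ and $g$ as convergent power series starting at order two, the normalizing transformation \pref{f1-s2.1a} is sought as formal power series $q_1, q_2$ whose coefficients are determined recursively by the homological equations obtained from substituting \pref{f1-s2.1a} into \pref{f1-s1.1} and demanding that the transformed system be exactly linear. At order $(m,n)$ in $(x,y)$, the coefficient of $q_1$ is divided by $(m\alpha - n\beta) + \alpha$ and that of $q_2$ by $(m\alpha - n\beta) - \beta$ (up to relabeling); because $\alpha, \beta > 0$ and \bpref{li:h1}\bpref{li:h1a} bounds $|m\alpha - n\beta|$ below by $d_1(|m|+|n|)^{-d_2}$ whenever $m,n \in \mathbb{Z}^+$, none of these small divisors vanishes, and the ones that could be small are polynomially bounded below. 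So the formal series is uniquely determined.

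The second and main step is to prove convergence of these formal series on a neighborhood $U$ whose size depends only on \pref{f1-s1.1}, $d_1$, and $d_2$. Here I would use a majorant series estimate in the style of Siegel: introduce the majorant $\Phi(x,y) = \sum \phi_{mn} x^m y^n$ dominating both $f$ and $g$ (so that $\Phi$ is analytic on some fixed polydisk of radius $r_0$ determined by \pref{f1-s1.1}), and derive from the recursion a scalar majorant relation of the form $\Psi \preceq$ (quadratic expression in $\Psi$) with the small-divisor loss $(|m|+|n|)^{d_2}$ absorbed via the elementary inequality $(m+n)^{d_2} \leqslant C(d_2)\, \kappa^{m+n}$ for any $\kappa > 1$, which only shrinks the radius of convergence by a fixed factor $\kappa^{-1}$ and is harmless since $d_2$ is fixed. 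Solving the resulting quadratic majorant equation by the implicit function theorem / Lagrange inversion gives an explicit positive radius of convergence depending only on $r_0$, $d_1$, $d_2$, and the $C^0$-size of $f,g$ on the polydisk — hence only on equation \pref{f1-s1.1} and the constants in \bpref{li:h1}\bpref{li:h1a}, as claimed.

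The third step is bookkeeping: having established convergence of $q_1, q_2$ on a polydisk of controlled radius, one checks that \pref{f1-s2.1a} is a near-identity biholomorphism onto its image (the linear part is the identity, so the inverse \pref{f2-s2.1a} exists and is analytic on a possibly slightly smaller polydisk, again of controlled size), and that in the new coordinates the vector field is exactly $\dot\xi = -\alpha\xi$, $\dot\eta = \beta\eta$ by construction of the recursion. Taking $U$ to be a round neighborhood of $(0,0)$ contained in this polydisk finishes the proof.

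I expect the main obstacle to be the convergence estimate in step two: one must organize the majorant recursion so that the accumulated small-divisor factors $\prod (|m|+|n|)^{d_2}$ along the tree of substitutions do not destroy convergence. The resolution is the standard one — the number of such factors in a degree-$N$ term grows only linearly in $N$, so replacing each factor by $\kappa^{|m|+|n|}$ and tracking the total degree keeps everything inside a geometric series; the quadratic (rather than higher-order) nature of the nonlinearity $q_i$ in the transformation is what makes the majorant equation explicitly solvable. No genuinely new idea beyond Siegel's method is needed, but the dependence of $U$ only on the data of \pref{f1-s1.1} and $d_1, d_2$ must be extracted carefully from the estimates rather than stated abstractly.
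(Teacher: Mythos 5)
The paper does not actually prove this proposition: its ``proof'' is a one-line citation to the literature, since the statement is the classical Siegel linearization theorem for an analytic saddle whose eigenvalue ratio satisfies a Diophantine condition. Your overall architecture --- set up the homological equations, solve them formally using the nonresonance condition, then prove convergence by majorants with a radius depending only on $d_1$, $d_2$, and the analyticity data of $f,g$ --- is the correct classical route, and your first and third steps are sound (the divisors are $(m-1)\alpha-n\beta$ for $q_1$ and $m\alpha-(n-1)\beta$ for $q_2$, modulo your acknowledged relabeling, and the degenerate index cases $m-1=0$ or $n-1=0$ are harmless because $\alpha,\beta>0$).

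The gap is in step two, and it is exactly the gap that makes Siegel's theorem a theorem rather than an exercise. You propose to absorb each reciprocal divisor, bounded by $d_1^{-1}(m+n)^{d_2}$, via $(m+n)^{d_2}\leqslant C\kappa^{m+n}$ and a shrinkage of the radius. But the degree-$N$ coefficient of $q_i$ does not carry a single such factor: unrolling the recursion, it carries a product of reciprocal divisors attached to the internal nodes of a substitution tree, and the degrees at those nodes do not sum to $O(N)$. Along a chain $N\to N-1\to\cdots\to 2$ (which does occur, since the right-hand side of the homological equation couples $q_1^{(N)}$ to $q_1^{(N-1)}$ through the quadratic terms $f\,\partial_x q_1+g\,\partial_y q_1$) the accumulated factor is $\prod_{k=2}^{N}d_1^{-1}k^{d_2}\sim d_1^{-N}(N!)^{d_2}$, which grows super-geometrically and is not controlled by any fixed shrinkage $\kappa^{-N}$ of the radius; equivalently, $\kappa^{\sum_j(m_j+n_j)}$ over the tree can be $\kappa^{\Theta(N^2)}$. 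The worst-case-at-every-node bound is therefore fatal, and the missing ingredient is Siegel's small-divisor counting lemma (or Brjuno's refinement, or a Newton/KAM iteration in place of the direct power-series recursion): two divisors at nearby degrees cannot both be extremely small, because their difference is again a quantity bounded below by the Diophantine condition, and this rarity of genuinely small divisors is what reduces the accumulated product to $C^N$. Once that lemma is in place, your remaining steps, including the claim that $U$ depends only on equation (\ref{f1-s1.1}) and on $d_1,d_2$, go through as you describe.
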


\begin{proof}
See~\cite{HmPcSm1977} for a proof.
\end{proof}

We now use the coordinate transformation of Proposition
~\ref{prop-2.1} to transform equation ~(\ref{f3-s1.1}). Observe that
by definition, $q_1(x, y)$ and $q_2(x, y)$ satisfy
\begin{gather}
\sublabon{equation}
(1 + \partial_x q_1(x, y))(-\alpha x + f(x, y)) + \partial_y
q_1(x, y)(\beta y + g(x, y)) = -\alpha \xi
\label{e:q1}\\
(1 + \partial_y q_2(x, y))(\beta y + g(x, y)) + \partial_x q_2(x,
y)(-\alpha x + f(x, y)) = \beta \eta. \label{e:q2}
\end{gather}
\sublaboff{equation}
We derive the form of ~(\ref{f3-s1.1}) in terms of $\xi$ and $\eta$. We
have
\begin{align*}
\frac{d \xi}{dt} &= (1 + \partial_x q_1(x, y))(-\alpha x + f(x,
y) - \mu (\rho h(x, y)+ \sin \theta) )\\
&\qquad {}+ \partial_y q_1(x, y) (\beta y + g(x, y) + \mu (\rho h(x, y)+ \sin \theta)) \\
&= -\alpha \xi - \mu (1 + \partial_x q_1(x, y)- \partial_y
q_1(x, y) ) (\rho h(x, y)+\sin \theta)
\end{align*}
where~\pref{e:q1} is used for the second
equality. Similarly, we have
\begin{align*}
\frac{d \eta}{dt} &= (1 + \partial_y q_2(x, y))(\beta y + g(x,
y) + \mu (\rho h(x, y)+ \sin \theta)) \\
&\qquad {}+ \partial_x q_2(x, y) (-\alpha x + f(x, y)
- \mu (\rho h(x, y)+ \sin \theta)) \\
&= \beta \eta  + \mu(1+ \partial_y q_2(x, y) - \partial_x q_2(x,
y))(\rho h(x, y)+ \sin \theta).
\end{align*}
Writing the functions of $x$ and $y$ as functions of $\xi$ and $\eta$ using
~(\ref{f2-s2.1a}), the form of ~(\ref{f3-s1.1}) in terms of $\xi$
and $\eta$ is given by
\begin{equation}\label{f4-s2.1a}
\left\{
\begin{aligned}
\frac{d\xi}{dt} &= - \alpha \xi - \mu (1+ h_1(\xi,
\eta)) (\rho H(\xi, \eta)+ \sin \theta) \\
\frac{d\eta}{dt} &= \beta \eta  + \mu (1 + h_2(\xi,
\eta))(\rho H(\xi, \eta)+ \sin \theta) \\
\frac{d \theta}{dt} &= \omega
\end{aligned}
\right.
\end{equation}
where $h_1(\xi, \eta) = \partial_x q_1(x, y) - \partial_y q_1(x,
y)$, $h_2(\xi, \eta) =
\partial_y q_2(x, y) - \partial_x q_2(x, y)$ are such that $h_1(0, 0) = h_2(0, 0) = 0$ and
$H(\xi, \eta) = h(x, y)$.

\vspace{0.2cm}

\refstepcounter{forms} \label{sss:formb} \noindent {\bfseries
\sffamily \ref{sss:formb}.  Second coordinate change:} $(\xi,
\eta) \to (X, Y)$. \ With the forcing added, the hyperbolic fixed
point $(x, y) = (0, 0)$ of equation ~(\ref{f1-s1.1}) is perturbed
to become a hyperbolic periodic solution of~(\ref{f2-s1.1}) period $2 \pi
\omega^{-1}$. We denote this periodic solution
in $(\xi, \eta, \theta)$-coordinates as $\xi = \mu \phi(\theta;
\mu)$, $\eta = \mu \psi(\theta; \mu)$.
\begin{proposition}\label{prop-2.2}
For equation ~(\ref{f4-s2.1a}), there exists a unique solution of
the form
\begin{equation*}
\xi = \mu \phi(\theta; \mu), \quad \eta = \mu \psi(\theta; \mu), \quad
\theta = \omega t
\end{equation*}
satisfying
\begin{equation*}
\phi(\theta; \mu) = \phi(\theta + 2 \pi; \mu), \quad \psi(\theta;
\mu) = \psi(\theta + 2 \pi; \mu).
\end{equation*}
The $C^3$ norms of the functions $\phi(\theta; \mu)$ and
$\psi(\theta; \mu)$, regarded as functions of $\theta$ and $\mu$,
are bounded by a constant $K$.
\end{proposition}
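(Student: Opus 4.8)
The plan is to produce the periodic orbit as the fixed point of a contraction obtained after rescaling out the factor $\mu$ in the forcing term. Substitute the ansatz $\xi = \mu\phi(\theta)$, $\eta = \mu\psi(\theta)$, $\theta = \omega t$ into~\pref{f4-s2.1a}. Along $\theta = \omega t$ one has $\frac{d}{dt} = \omega\frac{d}{d\theta}$, and since $h_1(0,0) = h_2(0,0) = 0$ the common factor $\mu$ cancels after substitution, leaving the system
\begin{equation*}
\left\{
\begin{aligned}
\omega\phi' + \alpha\phi &= -\bigl(1 + h_1(\mu\phi,\mu\psi)\bigr)\bigl(\rho H(\mu\phi,\mu\psi) + \sin\theta\bigr) =: F_1(\theta,\phi,\psi;\mu),\\
\omega\psi' - \beta\psi &= \bigl(1 + h_2(\mu\phi,\mu\psi)\bigr)\bigl(\rho H(\mu\phi,\mu\psi) + \sin\theta\bigr) =: F_2(\theta,\phi,\psi;\mu),
\end{aligned}
\right.
\end{equation*}
which is \emph{regular} as $\mu \to 0$. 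A $2\pi\omega^{-1}$-periodic solution of~\pref{f4-s2.1a} of the required form is precisely a $2\pi$-periodic solution $(\phi(\cdot;\mu),\psi(\cdot;\mu))$ of this system (for $\mu=0$ understood as the natural limit), so it suffices to produce the latter together with the stated norm bound.

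Next I would invert the linear part on the space of $2\pi$-periodic functions. Because $\alpha,\beta > 0$ (only $\alpha\beta \neq 0$ is used here, not the full strength of the Diophantine condition~\bpref{li:h1}\bpref{li:h1a}; this is where hyperbolicity of the saddle enters), the operators $\mathcal{L}_1 = \omega\partial_\theta + \alpha$ and $\mathcal{L}_2 = \omega\partial_\theta - \beta$ are invertible on $C^0(\mathbb{S}^{1})$ via the one-sided Green's representations
\begin{equation*}
(\mathcal{L}_1^{-1}g)(\theta) = \frac{1}{\omega}\int_{-\infty}^{\theta} e^{-\alpha(\theta-s)/\omega}\, g(s)\, ds, \qquad (\mathcal{L}_2^{-1}g)(\theta) = -\frac{1}{\omega}\int_{\theta}^{+\infty} e^{-\beta(s-\theta)/\omega}\, g(s)\, ds,
\end{equation*}
and since $\partial_\theta$ commutes with $\mathcal{L}_i$ one obtains $\|\mathcal{L}_1^{-1}\|_{C^k \to C^k} \leq \alpha^{-1}$ and $\|\mathcal{L}_2^{-1}\|_{C^k \to C^k} \leq \beta^{-1}$ for every $k \geq 0$. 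The periodic problem is then equivalent to the fixed-point equation $(\phi,\psi) = \mathcal{T}_\mu(\phi,\psi)$, where $\mathcal{T}_\mu(\phi,\psi) = \bigl(\mathcal{L}_1^{-1}F_1(\cdot,\phi,\psi;\mu),\ \mathcal{L}_2^{-1}F_2(\cdot,\phi,\psi;\mu)\bigr)$.

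The crucial structural point is that, because $h_1(0,0) = h_2(0,0) = 0$ and the arguments of $h_i,H$ carry a factor $\mu$, the dependence of $F_i$ on $(\phi,\psi)$ is $O(\mu)$-Lipschitz on every fixed ball of $C^{3}(\mathbb{S}^{1})^{2}$ — with Lipschitz constant $\mu\,P(\text{radius})$ for a polynomial $P$ determined by the $C^4$-norms near $0$ of $h$ and of the linearizing coordinate change — whereas the $(\phi,\psi)$-independent part of $F_i$ is just $\mp(\rho H(0,0) + \sin\theta)$; note that near the fixed point $h_1,h_2,H$ are in fact analytic, hence $C^\infty$. Hence I would first fix a radius $R$, depending only on $\alpha$, $\beta$, $\rho$ and $H(0,0)$, so that $\mathcal{T}_\mu$ maps the closed ball $\bar{B}_R \subset C^{3}(\mathbb{S}^{1})^{2}$ into itself, and then fix $\mu_0$ small enough that $\mathcal{T}_\mu$ is a $\tfrac12$-contraction on $\bar{B}_R$ for all $\mu \in [0,\mu_0]$ (shrinking $\mu_0$ also keeps $(\mu\phi,\mu\psi)$ inside the neighbourhood of $0$ on which $h_i,H$ are defined). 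The Banach fixed-point theorem then gives, for each $\mu \in [0,\mu_0]$, a unique fixed point $(\phi(\cdot;\mu),\psi(\cdot;\mu)) \in \bar{B}_R$ — the unique periodic solution of size $O(\mu)$ — with $C^3(\mathbb{S}^{1})$-norms bounded by $R$.

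It remains to upgrade this to a bound in $C^3$ jointly in $(\theta,\mu)$. The map $\mathcal{G}\bigl((\phi,\psi),\mu\bigr) = (\phi,\psi) - \mathcal{T}_\mu(\phi,\psi)$ is $C^\infty$ from $C^{3}(\mathbb{S}^{1})^{2} \times [0,\mu_0]$ into $C^{3}(\mathbb{S}^{1})^{2}$ (the Nemytskii operators built from the analytic functions $h_i,H$ are $C^\infty$ maps between $C^3$ function spaces, and $\mathcal{L}_i^{-1}$ are bounded linear), and at the fixed point $D_{(\phi,\psi)}\mathcal{G} = \mathrm{Id} - D\mathcal{T}_\mu$ is invertible with $\|(D_{(\phi,\psi)}\mathcal{G})^{-1}\| \leq 2$ since $\|D\mathcal{T}_\mu\| \leq \tfrac12$. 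By the implicit function theorem the curve $\mu \mapsto (\phi(\cdot;\mu),\psi(\cdot;\mu))$ is $C^\infty$ as a map into $C^{3}(\mathbb{S}^{1})^{2}$; differentiating $\mathcal{G} = 0$ in $\mu$ up to three times and inserting the uniform bound on $(D_{(\phi,\psi)}\mathcal{G})^{-1}$ together with the (explicit, uniformly bounded) $\mu$-derivatives of $\mathcal{T}_\mu$ gives bounds on $\partial_\mu^j(\phi,\psi)$, $j=1,2,3$, uniform in $\mu$. Since a $C^3$ curve into $C^3(\mathbb{S}^{1})$ defines a jointly $C^3$ function on $\mathbb{S}^{1}\times[0,\mu_0]$, this yields $\|\phi\|_{C^3(\mathbb{S}^{1}\times[0,\mu_0])},\|\psi\|_{C^3(\mathbb{S}^{1}\times[0,\mu_0])} \leq K$. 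The main obstacle, such as it is, is precisely this last step: the $\mu$-derivative bounds are \emph{not} small (they involve $\rho$ and $Dh(0,0)$) and are \emph{not} produced by the contraction, so the constant $K$ cannot be read off from it and must be extracted by differentiating the fixed-point identity; the rest is careful bookkeeping, chiefly verifying that the $\mu$-rescaling genuinely removes the singular prefactor and that the $(\phi,\psi)$-dependence of the nonlinearity is $O(\mu)$ uniformly.
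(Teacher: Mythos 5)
Your proposal is correct and follows essentially the same route as the paper: reduce to the $2\pi$-periodic problem for $(\phi,\psi)$, invert the linear part by a Green's/variation-of-constants representation (your semi-infinite kernels resum to the paper's finite-interval formula with the prefactor $(1-e^{-2\pi\alpha\omega^{-1}})^{-1}$), apply the contraction mapping theorem, and obtain the $C^3$ bounds by differentiating the fixed-point identity in $\theta$ and $\mu$. The only quibble is cosmetic: the factor $\mu$ cancels because the forcing carries an explicit $\mu$ prefactor and the linear part is homogeneous, not because $h_1(0,0)=h_2(0,0)=0$; that vanishing is what you correctly use later to make the $(\phi,\psi)$-dependence $O(\mu)$-Lipschitz.
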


\begin{proof}
Write $\phi = \phi(\theta; \mu)$, $\psi =
\psi(\theta; \mu)$. The functions $\phi$ and $\psi$ should satisfy
\begin{equation}\label{f2-s2.1b}
\begin{aligned}
\omega \frac{d \phi}{d \theta} &= - \alpha \phi - (1 + h_1(\mu
\phi, \mu \psi)) (\rho H(\mu \phi, \mu \psi)+ \sin \theta) \\
\omega \frac{d \psi}{d \theta} &= \beta \psi  +(1 +  h_2(\mu
\phi, \mu \psi)) (\rho H(\mu \phi, \mu \psi)+ \sin \theta).
\end{aligned}
\end{equation}
From ~(\ref{f2-s2.1b}) it follows that
\begin{align*}
\phi(\theta; \mu) &= e^{- \alpha \omega^{-1} (\theta - \theta_0)}
\phi(\theta_0; \mu) -\omega^{-1} \int_{\theta_0}^{\theta}
e^{\alpha\omega^{-1} (s - \theta)} [1 + h_1(\mu \phi(s; \mu),
\mu \psi(s; \mu))] \times \\
&\qquad [\rho H(\mu \phi(s; \mu), \mu \psi(s; \mu))+
\sin s] \, ds \\
\psi(\theta; \mu) &= e^{\beta \omega^{-1} (\theta - \theta_0)}
\psi(\theta_0; \mu) + \omega^{-1} \int_{\theta_0}^{\theta}
e^{-\beta \omega^{-1}(s - \theta)}[1+ h_2(\mu \phi(s; \mu), \mu
\psi(s; \mu))] \times \\
&\qquad [\rho H(\mu \phi(s; \mu), \mu \psi(s; \mu))+ \sin s] \, ds.
\end{align*}
To solve for $\phi$ and $\psi$ we let $\theta = \theta_0 + 2 \pi$
and set $\phi(\theta_0 + 2 \pi; \mu) = \phi(\theta_0; \mu)$,
$\psi(\theta_0 + 2 \pi; \mu) = \psi(\theta_0; \mu)$, obtaining
\begin{equation}\label{f3-s2.1b}
\begin{aligned}
\phi(\theta; \mu) &= \frac{- \omega^{-1}}{1 - e^{- 2 \alpha
\omega^{-1} \pi}} \int_0^{2 \pi} e^{\alpha\omega^{-1} (s - 2 \pi)}
[1 + h_1(\mu \phi(s+\theta; \mu),
\mu \psi(s+\theta; \mu))] \times \\
&\qquad [\rho H(\mu \phi(s+ \theta; \mu), \mu \psi(s+\theta;
\mu))+ \sin (s+\theta)] \, ds \\
\psi(\theta; \mu) &= \frac{\omega^{-1}}{1-e^{2 \beta \omega^{-1}
\pi}} \int_{0}^{2 \pi} e^{-\beta \omega^{-1}(s - 2 \pi)}[1+
h_2(\mu \phi(s+\theta; \mu), \mu
\psi(s+ \theta; \mu))] \times \\
&\qquad [\rho H(\mu \phi(s+\theta; \mu), \mu \psi(s+\theta; \mu))+
\sin (s+ \theta)] \, ds.
\end{aligned}
\end{equation}
The existence and uniqueness of $\phi(\theta; \mu)$ and
$\psi(\theta; \mu)$ follows directly from an application of the
contraction mapping theorem to ~(\ref{f3-s2.1b}). The asserted
bound on partial derivatives with respect to $\theta$ and $\mu$
follows from differentiating ~(\ref{f3-s2.1b}) with respect to
$\theta$ and $\mu$.
\end{proof}

We now introduce new variables $(X, Y)$ by defining
\begin{equation}\label{f1-s2.1b}
X  = \xi - \mu \phi(\theta; \mu), \quad Y = \eta - \mu
\psi(\theta; \mu).
\end{equation}
We have
\begin{align*}
\frac{dX}{dt} &= -\alpha X - \alpha \mu \phi - \mu \omega \frac{d
\phi}{d \theta} - \mu (1+ h_1(X + \mu \phi, Y + \mu \psi))(\rho H(X+\mu
\phi, Y + \mu \psi)+ \sin \theta)
\\
\frac{dY}{dt} &= \beta Y + \beta \mu \psi - \mu \omega \frac{d
\psi}{d \theta} + \mu (1 + h_2(X + \mu \phi, Y + \mu \psi))(\rho H(X + \mu
\phi, Y + \mu \psi)+ \sin \theta).
\end{align*}
Using ~(\ref{f2-s2.1b}), the form of ~(\ref{f3-s1.1}) in terms of
$X$, $Y$ and $\theta$ is given by
\begin{equation}\label{f5-s2.1b}
\left\{
\begin{aligned}
\frac{dX}{dt} &= -\alpha X + \mu F(X, Y, \theta; \mu)    \\
\frac{dY}{dt} &= \beta Y + \mu G(X, Y, \theta; \mu) \\
\frac{d \theta}{dt} &= \omega
\end{aligned}
\right.
\end{equation}
where
\begin{align*}
F(X, Y, \theta; \mu) &= -[h_1 (X + \mu \phi, Y + \mu \psi) -
h_1(\mu \phi, \mu \psi)] (\rho H(X + \mu \phi, Y + \mu \psi) + \sin \theta)\\
&\qquad {}- \rho (1+h_1(\mu \phi,
\mu \psi)) (H(X + \mu \phi, Y+
\mu \psi) - H(\mu \phi, \mu \psi)) \\
G(X, Y, \theta; \mu) &= [h_2 (X + \mu \phi, Y + \mu \psi) -
h_2(\mu
\phi, \mu \psi)](\rho H(X+\mu \phi, Y + \mu \psi) + \sin \theta)\\
&\qquad {}+ \rho(1+ h_2(\mu \phi,
\mu \psi))(H(X + \mu \phi, Y+ \mu \psi) - H(\mu \phi, \mu \psi))
\end{align*}
are such that $F(0, 0, \theta; \mu) = G(0, 0, \theta; \mu) = 0$.
Observe that in the new coordinates $(X, Y, \theta)$, the solution
$\xi = \mu \phi(\theta; \mu)$, $\eta = \mu \psi(\theta;\mu)$ is
represented by $X = Y = 0$. We remark that on
\begin{equation*}
\{(X, Y, \theta; \mu) : \| (X, Y) \| < \varepsilon, \; \theta \in
\mbb{S}^{1}, \; 0 \leqs \mu \leqs \mu_0 \},
\end{equation*}
\begin{enumerate}
\item
$F(X, Y, \theta; \mu)$ and $G(X, Y, \theta; \mu)$ are analytic
functions bounded by $K \varepsilon$;
\item
it follows from Proposition ~\ref{prop-2.2} that the
$C^3$ norms of both $F$ and $G$ as functions of $(X, Y, \theta)$
and $\mu$ are bounded by a constant $K$.
\end{enumerate}

\vspace{0.2cm}

\refstepcounter{forms} \label{sss:formc} \noindent {\bfseries
\sffamily \ref{sss:formc}. Third coordinate change:} $(X, Y) \to
({\bf X, \bf Y})$. \ The periodic solution $(X, Y, \theta) = (0,
0, \omega t)$ of equation ~(\ref{f5-s2.1b}) has a local unstable
manifold, which we write as
\begin{equation*}
X = \mu W^u(Y, \theta; \mu),
\end{equation*}
and a local stable manifold, which we write as
\begin{equation*}
Y = \mu W^s(X, \theta; \mu).
\end{equation*}

\begin{proposition}\label{prop-2.3}
There exists $\varepsilon > 0$ and $\mu_0 = \mu_0(\varepsilon)
> 0$ such that $W^u(Y, \theta; \mu)$ and $W^s(X,
\theta; \mu)$ are analytically defined on
\begin{equation*}
(-\varepsilon, \varepsilon) \times \mbb{S}^{1} \times  [0, \mu_0]
\end{equation*}
and satisfy
\begin{equation*}
W^u(0, \theta; \mu)=0, \quad W^s(0, \theta; \mu) = 0.
\end{equation*}
The $C^3$ norms of $W^u(Y, \theta; \mu)$ and $W^s(X, \theta; \mu)$,
regarded as functions of all three of their arguments, are
bounded by a constant $K$.
\end{proposition}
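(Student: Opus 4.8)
The plan is to realize the local unstable manifold as the fixed point of a Lyapunov--Perron integral operator, and then to extract analyticity and the uniform $C^{3}$ bound from a single contraction performed in a space of bounded holomorphic functions. I will carry out the argument for $W^{u}$; the case of $W^{s}$ is symmetric under time reversal. A point $(X_{0}, Y_{0}, \theta_{0})$ lies on the local unstable manifold of the periodic orbit $X = Y = 0$ of~\eqref{f5-s2.1b} exactly when the orbit through it, written $(X(t), Y(t), \theta_{0} + \omega t)$, stays in $\{ \|(X,Y)\| < \varepsilon \}$ for all $t \leq 0$; by hyperbolicity such an orbit then converges to the periodic orbit as $t \to -\infty$. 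Splitting along the hyperbolic directions --- the expanding $Y$-direction carries the initial data at $t = 0$, the contracting $X$-direction must decay in backward time and so carries no homogeneous term --- this is equivalent to the system
\[
X(t) = \mu \int_{-\infty}^{t} e^{-\alpha(t - s)} F(X(s), Y(s), \theta_{0} + \omega s; \mu) \, ds,
\]
\[
Y(t) = e^{\beta t} \eta + \mu \int_{0}^{t} e^{\beta(t - s)} G(X(s), Y(s), \theta_{0} + \omega s; \mu) \, ds,
\]
where $\eta = Y(0)$, after which one sets $W^{u}(\eta, \theta_{0}; \mu) = \mu^{-1} X(0) = \int_{-\infty}^{0} e^{\alpha s} F(X(s), Y(s), \theta_{0} + \omega s; \mu) \, ds$. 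The cancellation of the prefactor $\mu$ in this last formula is what will make a bound independent of $\mu$ possible.

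First I would solve the pair of integral equations by the contraction mapping principle in the Banach space of bounded continuous maps $(X, Y) : (-\infty, 0] \to \mathbb{R}^{2}$ with the supremum norm, restricted to a ball of radius $O(\varepsilon)$. Since $\int_{-\infty}^{t} e^{-\alpha(t-s)} \, ds = \alpha^{-1}$ and $\int_{0}^{t} e^{\beta(t-s)} \, ds \leq \beta^{-1}$ for $t \leq 0$, and since $F, G$ and their first derivatives are bounded on the relevant region by $K$ (the remarks following~\eqref{f5-s2.1b}), the operator maps the ball into itself and is a contraction with rate at most $\mu K (\alpha^{-1} + \beta^{-1}) < 1$, provided $\mu_{0} = \mu_{0}(\varepsilon)$ is chosen small. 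This produces, for each $(\eta, \theta_{0})$ with $|\eta| < \varepsilon$, a unique bounded orbit and hence $W^{u}$. Taking $\eta = 0$, the zero orbit solves the system because $F$ and $G$ vanish on $X = Y = 0$, so uniqueness forces $W^{u}(0, \theta; \mu) = 0$; the companion computation gives $W^{s}(0, \theta; \mu) = 0$.

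For the regularity statement I would rerun exactly this contraction with complexified arguments: $F$ and $G$ extend holomorphically in $(X, Y)$, they extend in $\theta$ to a complex annulus because they are real-analytic in $\theta$ (the periodic solution $\phi(\theta; \mu), \psi(\theta; \mu)$ of Proposition~\ref{prop-2.2} is a periodic solution of an analytic ODE, hence analytic in $\theta$ --- a shade more than that proposition records), and they extend in $\mu$ to a complex neighborhood of $[0, \mu_{0}]$ because the fixed-point equation~\eqref{f3-s2.1b} depends analytically on $\mu$. Every estimate above involves only absolute values, so the contraction persists in the space of bounded holomorphic maps over a fixed product neighborhood of the parameters $(\eta, \theta_{0}, \mu)$; its fixed point, a locally uniform limit of holomorphic iterates, is holomorphic, and therefore $W^{u}$ is analytic in $(Y, \theta)$ and in $\mu$. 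Because $W^{u}$ is then holomorphic and bounded by a $\mu$-independent constant on a fixed complex neighborhood, Cauchy's estimates bound all of its derivatives --- in particular its $C^{3}$ norm --- uniformly in $\mu \in [0, \mu_{0}]$. (One could instead differentiate the integral equations up to order three, obtain linear integral equations for the derivatives that are again contractions with the same rate, and bootstrap.) The map $W^{s}$ is obtained by the mirror-image construction, replacing $t \leq 0$ by $t \geq 0$ and interchanging the decaying and data-carrying directions, so that $Y(t) = -\mu \int_{t}^{\infty} e^{\beta(t-s)} G \, ds$, $X(t) = e^{-\alpha t} X(0) + \mu \int_{0}^{t} e^{-\alpha(t-s)} F \, ds$, and $W^{s}(X_{0}, \theta_{0}; \mu) = -\int_{0}^{\infty} e^{-\beta s} G(X(s), Y(s), \theta_{0} + \omega s; \mu) \, ds$.

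The existence and smoothness of the manifolds is classical Lyapunov--Perron theory, so the real work is bookkeeping: the hard part will be to verify that analyticity and $\mu$-uniformity can be maintained simultaneously --- that the complex neighborhood on which $F$ and $G$ extend, the contraction rate $\mu K(\alpha^{-1} + \beta^{-1})$, and the sup bound $K$ on $F, G$ are all genuinely independent of $\mu \in [0, \mu_{0}]$. This is precisely the content of the uniform $C^{3}$ bounds secured in Propositions~\ref{prop-2.1} and~\ref{prop-2.2}, which is why those bounds were stated the way they were.
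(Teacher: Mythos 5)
Your proposal is correct and is essentially the argument the paper intends: the paper's proof simply complexifies $X$, $Y$, $\theta$, and $\mu$ in~(\ref{f5-s2.1b}) and invokes the standard contraction-mapping construction of the local invariant manifolds (citing~\cite{HmPcSm1977}), which is exactly the Lyapunov--Perron fixed-point scheme you carry out, with the uniform $C^3$ bound following from the $\mu$-independent bounds on $F$ and $G$ and Cauchy estimates on the holomorphic fixed point.
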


\begin{proof}
We regard $X$, $Y$, $\theta$, and $\mu$ in
equation ~(\ref{f5-s2.1b}) as complex variables. The existence and
smoothness of local stable and unstable manifolds follows from the
standard argument based on the contraction mapping theorem. See
~\cite{HmPcSm1977} for instance.
\end{proof}

By definition, $W^u(Y, \theta; \mu)$ satisfies
\begin{equation}\label{f1-s2.1c}
\begin{aligned}
-\alpha W^u &(Y, \theta; \mu) + F(\mu W^u(Y, \theta; \mu), Y,
\theta; \mu) =
\omega \partial_{\theta} W^u(Y, \theta; \mu) \\
&+ \partial_Y W^u(Y, \theta; \mu) (\beta Y + \mu
G(\mu W^u(Y, \theta; \mu), Y, \theta; \mu)).
\end{aligned}
\end{equation}
Similarly, $W^s(X, \theta; \mu)$ satisfies
\begin{equation}\label{f2-s2.1c}
\begin{aligned}
\beta W^s &(X, \theta; \mu) + G(X, \mu W^s(X, \theta; \mu),
\theta; \mu) =
\omega \partial_{\theta} W^s(X, \theta; \mu) \\
&+ \partial_X W^s(X, \theta; \mu) (-\alpha X + \mu
F(X, \mu W^s(X, \theta; \mu), \theta; \mu)).
\end{aligned}
\end{equation}

Define the new variables $\mathbf{X}$ and $\mathbf{Y}$ by
\begin{equation}\label{f2a-s2.1c}
{\bf X} = X - \mu W^u(Y, \theta; \mu), \quad {\bf Y} = Y - \mu
W^s(X, \theta; \mu).
\end{equation}
By using ~(\ref{f5-s2.1b}), ~(\ref{f1-s2.1c}), and ~(\ref{f2-s2.1c}),
the form of ~(\ref{f3-s1.1}) in terms of $({\bf X}, {\bf Y},
\theta)$ is given by
\begin{equation}\label{f3-s2.1c}
\left\{
\begin{aligned}
\frac{d {\bf X}}{d t}  &= (-\alpha + \mu {\bf F}({\bf X}, {\bf Y}, \theta; \mu)) {\bf X} \\
\frac{d {\bf Y}}{d t}  &= (\beta + \mu {\bf G}({\bf X}, {\bf Y},
\theta; \mu)) {\bf
Y}\\
\frac{d \theta}{dt} &= \omega.
\end{aligned}
\right.
\end{equation}
where ${\bf F}$ and ${\bf G}$ are analytic functions of ${\bf X}$, ${\bf
Y}$, $\theta$, and $\mu$ defined on $U_{\varepsilon} \times \mbb{S}^{1}
\times [0, \mu_0]$.  The $C^3$ norms of ${\bf F}$ and ${\bf G}$
are bounded by a constant $K$.
Tracing back to the variables $(\xi, \eta)$, we have
\begin{align}
\sublabon{equation}
{\bf X} &= \xi - \mu \left( \phi(\theta; \mu) +
W^u(\eta - \mu \psi(\theta; \mu), \theta; \mu)\right)
\label{e:convertX}\\
{\bf Y} &= \eta - \mu \left(\psi(\theta; \mu) + W^s(\xi - \mu
\phi(\theta; \mu), \theta; \mu)\right).
\label{e:convertY}
\end{align}
\sublaboff{equation}

\vspace{0.2cm}

\refstepcounter{forms} \label{sss:formd} \noindent {\bfseries
\sffamily \ref{sss:formd}.  Fourth coordinate change:} $({\bf X},
{\bf Y}) \to ({\mathbb X}, {\mathbb Y})$. \ The final coordinate
change is a rescaling of ${\bf X}$ and ${\bf Y}$ by the factor
$\mu^{-1}$. Let
\begin{equation}\label{f1a-s2.1d}
{\mathbb X} = \mu^{-1} {\bf X}, \quad {\mathbb Y} = \mu^{-1} {\bf
Y}.
\end{equation}
We write equation ~(\ref{f3-s2.1c}) in ${\mathbb X}$ and ${\mathbb Y}$
as
\begin{equation}\label{f1-s2.1d}
\left\{
\begin{aligned}
\frac{d {\mathbb X}}{d t}  &= (-\alpha + \mu {\mathbb F}({\mathbb
X}, {\mathbb Y}, \theta;
\mu)) {\mathbb X} \\
\frac{d {\mathbb Y}}{d t}  &= (\beta + \mu {\mathbb G}({\mathbb
X}, {\mathbb Y}, \theta; \mu)) {\mathbb
Y}\\
\frac{d \theta}{dt} &= \omega
\end{aligned}
\right.
\end{equation}
where
\begin{equation*}
{\mathbb F}({\mathbb X}, {\mathbb Y}, \theta; \mu)  = {\bf F}(\mu
{\mathbb X}, \mu {\mathbb Y}, \theta; \mu), \quad {\mathbb
G}({\mathbb X}, {\mathbb Y}, \theta; \mu)  = {\bf G}(\mu {\mathbb
X}, \mu {\mathbb Y}, \theta; \mu)
\end{equation*}
are analytic functions of ${\mathbb X}$, ${\mathbb Y}$, $\theta$, and
$\mu$ defined on
\begin{equation*}
{\mathbb D} = \{ ({\mathbb X}, {\mathbb Y}, \theta, \mu) : \mu
\in [0, \mu_0], \; ({\mathbb X}, {\mathbb Y}, \theta) \in
\mscr{U}_{\varepsilon} \}
\end{equation*}
where
\begin{equation*}
\mscr{U}_{\varepsilon} = \{ ({\mathbb X}, {\mathbb Y},
\theta): \|({\mathbb X}, {\mathbb Y})\| < 2 \varepsilon
\mu^{-1}, \; \theta \in \mbb{S}^{1} \}.
\end{equation*}

\begin{remark}
We remind the reader that all constants
represented by $K$ in Section~\ref{s2.1} are independent of
$\varepsilon$ and $\mu$.
\end{remark}

\subsection{A standard form around the homoclinic loop}\label{s3.1}
In this subsection we derive a standard form for equation
~(\ref{f3-s1.1}) around the homoclinic loop of equation
~(\ref{f1-s1.1}) outside of ${\mathcal
U}_{\frac{1}{4}\varepsilon^2}$. Some elementary estimates are also
included.

\vspace{0.2cm}

\refstepcounter{derivaeq} \label{sss:derivationeq} \noindent
{\bfseries \sffamily \ref{sss:derivationeq}. Derivation of
equations.} \ Let us regard $t$ in $\ell(t) =(a(t), b(t))$ not as
time, but as a parameter that parametrizes the curve $\ell$ in
$(x, y)$-space. We replace $t$ by $s$ and write this homoclinic
loop as $\ell(s) = (a(s), b(s))$. We have
\begin{equation}\label{f1-s3.1}
\begin{aligned}
\frac{d a(s)}{ds} &= -\alpha a(s) +  f(a(s), b(s))\\
\frac{d b(s)}{ds} &= \beta b(s) + g(a(s), b(s)).
\end{aligned}
\end{equation}
Define
\begin{equation*}
(u(s), v(s)) = \left\| \frac{d}{ds} \ell (s) \right\|^{-1} \frac{d}{ds} \ell (s).
\end{equation*}
We have
\begin{equation}\label{f2-s3.1}
\begin{aligned}
u(s) &= \frac{-\alpha a(s) + f(a(s), b(s))}{\sqrt{(-\alpha a(s) +
f(a(s), b(s)))^2 + (\beta b(s) + g(a(s)
b(s)))^2}}, \\
v(s) &= \frac{\beta b(s) + g(a(s), b(s))}{\sqrt{(-\alpha a(s) +
f(a(s), b(s)))^2 + (\beta b(s) +  g(a(s), b(s)))^2}}.
\end{aligned}
\end{equation}
Let
\begin{equation*}
\mbf{e} (s) = (v(s), -u(s)).
\end{equation*}
The vector $\mbf{e} (s)$ is the inward unit normal vector to $\ell$ at $\ell (s)$.  We now
introduce the new variable $z$ such that
\begin{equation*}
(x, y) = \ell(s) + z \mbf{e} (s).
\end{equation*}
That is,
\begin{equation}\label{f3-s3.1}
x = x(s, z) = a(s) + v(s) z, \quad y =y(s, z)= b(s) - u(s) z.
\end{equation}

We derive the form of ~(\ref{f3-s1.1}) in terms of the new
variables $(s, z)$ defined through ~(\ref{f3-s3.1}).
Differentiating ~(\ref{f3-s3.1}), we obtain
\begin{equation}\label{f4-s3.1}
\begin{aligned}
\frac{dx}{dt} &= (- \alpha a(s) + f(a(s), b(s)) + v'(s) z)
\frac{ds}{dt} + v(s)
\frac{dz}{dt} \\
\frac{dy}{dt} &= (\beta b(s) + g(a(s), b(s)) - u'(s) z)
\frac{ds}{dt} - u(s) \frac{dz}{dt}
\end{aligned}
\end{equation}
where $u'(s) = \frac{d u(s)}{ds}$ and $v'(s) = \frac{d v(s)}{ds}$.
Denote
\begin{align*}
F(s, z) &= - \alpha (a(s) + z v(s)) + f(a(s) + z v(s), b(s) - z u(s)) \\
G(s, z) &= \beta (b(s) - z u(s)) + g(a(s) + z v(s), b(s) - z
u(s)) \\
{\mathbb H}(s, z) &= h(a(s) + z v(s), b(s) - z u(s)).
\end{align*}
Using ~(\ref{f3-s1.1}) and ~(\ref{f4-s3.1}), we have
\begin{align*}
\frac{ds}{dt} &= \frac{v(s) G(s, z) + u(s) F(s, z) + \mu (v(s) -
u(s)) (\rho {\mathbb H}(s, z)+ \sin \theta)}{\sqrt{F(s, 0)^2 +
G(s, 0)^2} + z(u(s)
v'(s) - v(s) u'(s))} \\
\frac{dz}{dt} &= v(s)F(s, z)- u(s) G(s, z) - \mu (u(s)+v(s))
(\rho {\mathbb H}(s, z)+ \sin \theta).
\end{align*}
We rewrite these equations as
\begin{equation}\label{f5-s3.1}
\begin{aligned}
\frac{ds}{dt} &= 1 + z w_1(s, z, \theta; \mu) +
\frac{\mu (v(s) - u(s))(\rho {\mathbb H}(s, 0)+ \sin \theta)}{\sqrt{F(s, 0)^2 + G(s, 0)^2}} \\
\frac{dz}{dt} &= E(s) z + z^2 w_2(s, z) - \mu (u(s) + v(s))
(\rho {\mathbb H}(s, z) +\sin \theta)\\
\frac{d \theta}{dt} &= \omega
\end{aligned}
\end{equation}
where
\begin{align*}
E(s) &= v^2 (s) (-\alpha + \partial_x f(a(s), b(s))) + u^2(s)
(\beta + \partial_y g(a(s), b(s))) \\
&\qquad {}- u(s) v(s) (\partial_y f(a(s), b(s)) + \partial_x g(a(s),
b(s))) \\
{\mathbb H}(s, 0) &= h(a(s), b(s)).
\end{align*}
Equation ~(\ref{f5-s3.1}) is defined on
\begin{equation*}
\{ s \in [-2 L^-, 2 L^+], \; \mu \in [0, \mu_0], \; \theta \in
\mbb{S}^{1}, \; |z| < K_0(\varepsilon) \mu \},
\end{equation*}
where $K_0(\varepsilon)$ is independent of $\mu$. The $C^3$ norms
of the functions $w_1(s, z, \theta; \mu)$ and $w_2(s, z)$ are bounded
by a constant $K(\varepsilon)$.

Finally, we rescale the variable $z$ by letting
\begin{equation}\label{f6-s3.1}
Z = \mu^{-1} z.
\end{equation}
We arrive at the equations
\begin{align}
\sublabon{equation}
\frac{ds}{dt} &= 1 + \mu \tilde w_1(s, Z, \theta; \mu)
\label{e:gnfa}\\
\frac{dZ}{dt} &= E(s) Z + \mu \tilde w_2(s, Z, \theta; \mu)
- (u(s) + v(s)) (\rho {\mathbb H}(s, 0)+ \sin \theta)
\label{e:gnfb}\\
\frac{d\theta}{dt} &= \omega
\label{e:gnfc}
\end{align}
\sublaboff{equation}
defined on
\begin{equation*} 
{\bf D} = \{ (s, Z, \theta; \mu) : s \in [-2L^-, 2L^+], \; |Z| \leqs K_{0} (\varepsilon),
\; \theta \in \mbb{S}^{1}, \; \mu \in [0, \mu_0] \}.
\end{equation*}
We assume that $\mu_0$ is sufficiently small so that
\begin{equation*}
\mu \ll \min_{s \in [-2L^-, 2L^+]} (F(s, 0)^2 + G(s, 0)^2).
\end{equation*}
The $C^3$ norms of the functions $\tilde w_1$ and $\tilde w_2$ are
bounded by a constant $K(\varepsilon)$ on ${\bf D}$.

System~\pref{e:gnfa}--\pref{e:gnfc} is the one we need. The function $E(s)$
appears in the integrals $A$, $C$, and $S$ in~\bpref{li:h2}.

\begin{remark}
Observe that all of the generic
constants that have appeared thus far in this subsection have the
form $K(\varepsilon)$.
\end{remark}

\vspace{0.2cm} \refstepcounter{derivaeq} \label{sss:techest}
\noindent  {\bfseries \sffamily \ref{sss:techest}. Technical
estimates.} \ We adopt the following conventions in comparing the
magnitude of two functions $f(s)$ and $g(s)$. We write $f(s) \prec
g(s)$ if there exists $K > 0$ independent of $s$ such that $
|f(s)|< K |g(s)|$ as $s \to \infty$ (or $-\infty$). We write $f(s)
\sim g(s)$ if in addition we have $|f(s)| > K^{-1} |g(s)|$. We
also write $f(s) \approx g(s)$ if
\begin{equation*}
\frac{f(s)}{g(s)} \to 1
\end{equation*}
as $s \to \infty$ (or $-\infty$).

Recall that $\ell(s) = (a(s), b(s))$ is the homoclinic solution
for the hyperbolic fixed point $(0, 0)$ of equation
~(\ref{f1-s1.1}). The vector $(u(s), v(s))$ is the unit tangent
vector to $\ell$ at $\ell(s)$.

\begin{lemma} \label{lem1-s2.2b}
As $s \to +\infty$, we have
\begin{enumerate}
\item $a(s) \sim e^{-\alpha s}$, $a(-s) \prec e^{- 2\beta s}$
\item $b(s) \prec e^{-2 \alpha s}$, $b(-s) \sim e^{- \beta s}$
\item $u(s) \approx -1$, $u(-s) \prec e^{-\beta s}$ \item $v(s)
\prec e^{-\alpha s}$, $v(-s) \approx 1$.
\end{enumerate}
\end{lemma}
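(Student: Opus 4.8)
The plan is to push everything down to the linearized picture near the saddle supplied by Proposition~\ref{prop-2.1}. Work in the analytic near-identity coordinates $(\xi,\eta)$ of~\pref{f1-s2.1a} on the neighborhood $U$ of $(0,0)$, in which the flow of~\pref{f1-s1.1} becomes $\dot\xi=-\alpha\xi$, $\dot\eta=\beta\eta$, so that the local stable and unstable manifolds of $(0,0)$ are exactly $\{\eta=0\}$ and $\{\xi=0\}$. By hypothesis the positive-$x$ side of $W^{s}_{\mathrm{loc}}$ and the positive-$y$ side of $W^{u}_{\mathrm{loc}}$ lie on $\ell$; since $\ell(s)\to(0,0)$ as $s\to\pm\infty$, there is $T_{0}>0$ with $\ell(s)\in U$ for $|s|\geqslant T_{0}$, lying on $W^{s}_{\mathrm{loc}}$ for $s\geqslant T_{0}$ and on $W^{u}_{\mathrm{loc}}$ for $s\leqslant -T_{0}$. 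Translating to $(\xi,\eta)$-coordinates, and noting that a near-identity change carries the positive-$x$ half of $\{\eta=0\}$ to the positive-$\xi$ half and the positive-$y$ half of $\{\xi=0\}$ to the positive-$\eta$ half, the loop satisfies $\xi(s)=c_{+}e^{-\alpha s}$, $\eta(s)=0$ for $s\geqslant T_{0}$, and $\xi(s)=0$, $\eta(s)=c_{-}e^{\beta s}$ for $s\leqslant -T_{0}$, with constants $c_{+},c_{-}>0$.

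Next I would transport these back to $(x,y)=(a(s),b(s))$ through the inverse transformation~\pref{f2-s2.1a}, $x=\xi+Q_{1}(\xi,\eta)$, $y=\eta+Q_{2}(\xi,\eta)$, using only that $Q_{1},Q_{2}$ vanish to second order at the origin. For $s\to+\infty$ this yields $a(s)=c_{+}e^{-\alpha s}+O(e^{-2\alpha s})$ and $b(s)=Q_{2}(c_{+}e^{-\alpha s},0)=O(e^{-2\alpha s})$, i.e.\ $a(s)\sim e^{-\alpha s}$ and $b(s)\prec e^{-2\alpha s}$; symmetrically, for $s\to-\infty$, $b(s)=c_{-}e^{\beta s}+O(e^{2\beta s})$ and $a(s)=Q_{1}(0,c_{-}e^{\beta s})=O(e^{2\beta s})$, i.e.\ $b(-s)\sim e^{-\beta s}$ and $a(-s)\prec e^{-2\beta s}$. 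This gives items (1) and (2).

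For (3) and (4) I would substitute these estimates into~\pref{f1-s3.1}. Since $f$ and $g$ vanish to second order, as $s\to+\infty$ we have $\|(a(s),b(s))\|\sim e^{-\alpha s}$ and hence $f(a(s),b(s)),\,g(a(s),b(s))=O(e^{-2\alpha s})$, so $a'(s)=-\alpha a(s)+O(e^{-2\alpha s})=-\alpha c_{+}e^{-\alpha s}(1+o(1))$ while $b'(s)=\beta b(s)+O(e^{-2\alpha s})=O(e^{-2\alpha s})$. Therefore $\|\ell'(s)\|=|a'(s)|\sqrt{1+(b'(s)/a'(s))^{2}}=\alpha c_{+}e^{-\alpha s}(1+o(1))$, which gives $u(s)=a'(s)/\|\ell'(s)\|\to-1$ and $v(s)=b'(s)/\|\ell'(s)\|=O(e^{-\alpha s})$, that is, $u(s)\approx-1$ and $v(s)\prec e^{-\alpha s}$. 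The case $s\to-\infty$ is the same computation with the roles of $(a,\alpha)$ and $(b,\beta)$ exchanged: now $b'(s)=\beta c_{-}e^{\beta s}(1+o(1))$ dominates $a'(s)=O(e^{2\beta s})$, so $v(s)\to1$ and $u(s)=O(e^{\beta s})$, i.e.\ $v(-s)\approx1$ and $u(-s)\prec e^{-\beta s}$.

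I do not expect a genuine obstacle here; the only point deserving care is the first step, namely arguing that the homoclinic orbit enters the linearizing chart and lies exactly on the invariant line $\{\eta=0\}$ (resp.\ $\{\xi=0\}$) for $|s|$ large, together with the strict positivity of the prefactors $c_{\pm}$. Once the solution is pinned to these lines, every estimate follows mechanically from the second-order vanishing of $f$, $g$, $Q_{1}$, and $Q_{2}$; in particular the $\prec$ bounds require no description of the invariant manifolds sharper than that of being graphs of functions vanishing to second order.
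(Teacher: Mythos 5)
Your proposal is correct and follows the same underlying idea as the paper, whose proof is just the one-sentence observation that $\ell(s)$ approaches the origin along the $x$-axis at rate $e^{-\alpha s}$ forward in time and along the $y$-axis at rate $e^{\beta s}$ backward in time. You have simply made this assertion rigorous by pinning the orbit to the invariant axes in the linearizing chart of Proposition~\ref{prop-2.1} and pushing the resulting exact exponentials through the second-order remainders $Q_{1}$, $Q_{2}$, $f$, $g$, which is exactly the justification the paper leaves implicit.
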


\begin{proof}
We are simply restating the fact that
$\ell(s) \to (0, 0)$ with an exponential rate $-\alpha$ in the
positive $s$-direction along the $x$-axis and with an exponential
rate $\beta$ in the negative $s$-direction along the $y$-axis.
\end{proof}

\begin{lemma}\label{lem2-s2.2b}
Let $E(s)$ be as in ~(\ref{f1a-s1.1}).  As $L^{\pm} \to +\infty$,
we have
\begin{enumerate}[(a)]
\item \label{li:Einta} $\int_{-L^-}^0 (E(s) + \alpha) \, ds \prec
1$ \item \label{li:Eintb} $\int_0^{L^+} (E(s) - \beta) \, ds \prec
1$ \item \label{li:Eintc} $\int_{-L^-}^0 E(s) \, ds \approx -
\alpha L^-$ \item \label{li:Eintd} $\int_0^{L^+} E(s) \, ds
\approx \beta L^+$.
\end{enumerate}
\end{lemma}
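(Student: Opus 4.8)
The plan is to prove all four statements by exploiting the asymptotic description of the homoclinic loop provided by Lemma~\ref{lem1-s2.2b} together with the explicit formula~\pref{f1a-s1.1} for $E(s)$. The key observation is that in the positive $s$-direction the orbit $\ell(s)$ approaches $(0,0)$ tangent to the $x$-axis at exponential rate $\alpha$, so $(u(s),v(s)) \to (-1,0)$, while in the negative $s$-direction it departs along the $y$-axis at rate $\beta$, so $(u(-s),v(-s)) \to (0,1)$. Substituting these limits into~\pref{f1a-s1.1} and using that $\partial_x f$, $\partial_y f$, $\partial_x g$, $\partial_y g$ all vanish at the origin (so they are small near the ends of the loop), one sees that $E(s) \to -\alpha + \partial_x f(a(s),b(s))$ as $s \to +\infty$ and $E(s) \to \beta + \partial_y g(a(s),b(s))$ as $s \to -\infty$ (taking $s \to +\infty$ in $E(-s)$). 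This immediately suggests that $E(s) + \alpha$ is small for $s$ large negative and $E(s) - \beta$ is small for $s$ large positive, which are the quantitative content of parts~\bpref{li:Einta} and~\bpref{li:Eintb}.

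For parts~\bpref{li:Einta} and~\bpref{li:Eintb}, I would first establish the pointwise estimates $E(s) - \beta \prec e^{-\gamma s}$ as $s \to +\infty$ and $E(s) + \alpha \prec e^{-\gamma s}$ as $s \to -\infty$ (with $s$ replaced by $|s|$) for some $\gamma > 0$. To do this, expand $E(s)$ as given. There are two sources of decay: the trigonometric coefficients $v^2(s)$, $u(s)v(s)$ (which by Lemma~\ref{lem1-s2.2b} are $\prec e^{-\alpha s}$ and $\prec e^{-\alpha s}$ respectively as $s \to +\infty$, since $v(s) \prec e^{-\alpha s}$), and the partial-derivative terms $\partial_x f(\ell(s))$, $\partial_y g(\ell(s))$, etc., which vanish at the origin and hence, being $C^1$, are bounded by a constant times $\|\ell(s)\| \prec e^{-\alpha s}$ (for $s \to +\infty$; similarly $\prec e^{-\beta |s|}$ for $s \to -\infty$). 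Writing $u^2(s) = 1 - v^2(s)$ and using $u(s) \approx -1$, the dominant term $u^2(s)\beta$ contributes $\beta$ and everything else is exponentially small; collecting these gives the desired pointwise bound. Integrating an exponentially decaying function over $[0,L^+]$ (respectively $[-L^-,0]$) yields a bound uniform in $L^{\pm}$, proving~\bpref{li:Einta} and~\bpref{li:Eintb}. One must be slightly careful that the constants here are allowed to depend on $\varepsilon$ (or rather are genuine constants depending only on equation~\pref{f1-s1.1}), which is consistent with the conventions of the section.

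Parts~\bpref{li:Eintc} and~\bpref{li:Eintd} follow from~\bpref{li:Einta} and~\bpref{li:Eintb} by writing
\[
\int_{-L^-}^{0} E(s)\, ds = -\alpha L^- + \int_{-L^-}^{0} (E(s)+\alpha)\, ds,
\]
and similarly $\int_0^{L^+} E(s)\, ds = \beta L^+ + \int_0^{L^+}(E(s)-\beta)\, ds$. Since the correction integrals are $\prec 1$ (in fact bounded) while $\alpha L^- \to \infty$ and $\beta L^+ \to \infty$ as $L^{\pm} \to +\infty$, dividing through shows the ratios tend to $1$, which is exactly the meaning of $\approx$.

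The main obstacle I anticipate is making the pointwise decay estimate for $E(s) - \beta$ (and $E(s)+\alpha$) genuinely rigorous rather than merely heuristic: one needs to control the cross term $-u(s)v(s)(\partial_y f + \partial_x g)$ and the term $v^2(s)(-\alpha + \partial_x f)$, and to handle the fact that $u^2(s) = 1 - v^2(s)$ contributes a correction $-v^2(s)\beta$ to the leading term. All of these are products of quantities each known to decay exponentially by Lemma~\ref{lem1-s2.2b}, so the bound is routine once set up carefully, but bookkeeping the exponents (which combination of $\alpha$ and $\beta$ governs each term, and noting that the slowest decay still beats any polynomial) is where the care is needed. A secondary point worth checking is that the homoclinic solution is $C^4$ near $\ell$ (assumed in Section~\ref{s1}), which guarantees $u, v \in C^1$ and legitimizes treating $\partial_x f(\ell(s))$ etc.\ as $C^0$ functions decaying like $\|\ell(s)\|$.
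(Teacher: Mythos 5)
Your proof is correct and follows essentially the same route as the paper's, which merely asserts that $E(s)+\alpha$ (as $s\to-\infty$) and $E(s)-\beta$ (as $s\to+\infty$) are collections of exponentially decaying terms and then uses the identical decomposition $\int E = \mp\alpha L^- \text{ or } \beta L^+ + \int(E\pm\alpha\text{ or }-\beta)$ for parts (c) and (d); your version simply fills in the pointwise decay estimates the paper leaves implicit. One slip to fix: in your overview paragraph the limits are stated backwards --- substituting $(u,v)\to(-1,0)$ gives $E(s)\to\beta+\partial_y g(\ell(s))$ as $s\to+\infty$ and $(u,v)\to(0,1)$ gives $E(s)\to-\alpha+\partial_x f(\ell(s))$ as $s\to-\infty$ --- although the subsequent detailed argument (and the conclusion you draw from it) has the correct orientation.
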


\begin{proof}
Statements~\pref{li:Einta} and~\pref{li:Eintb} claim that the
integrals are convergent as $L^{\pm} \to \infty$.
For~\pref{li:Einta}, we observe that by adding $\alpha$ to $E(s)$,
we obtain $E(s) + \alpha$ as a collection of terms, each of which
decays exponentially as $s \to -\infty$ by Lemma
~\ref{lem1-s2.2b}. Similarly, taking $\beta$ away from $E(s)$, we
obtain $E(s) - \beta$ as a collection of terms, each of which
decays exponentially as $s \to \infty$.

For~\pref{li:Eintc} and~\pref{li:Eintd} we write
\begin{gather*}
\int_{-L^-}^0 E(s) \, ds = - \alpha L^- + \int_{-L^-}^0 (E(s) +
\alpha)
\, ds \\
\int_{0}^{L^+} E(s) \, ds = \beta L^+  + \int_{0}^{L^+} (E(s)-\beta) \, ds. \\
\end{gather*}
Statements~\pref{li:Eintc} and~\pref{li:Eintd} now follow from~\pref{li:Einta} and~\pref{li:Eintb}, respectively.
\end{proof}

\begin{lemma}\label{lem3-s2.2b}
All of the integrals defined in ~(\ref{f2a-s1.1}) are absolutely
convergent.
\end{lemma}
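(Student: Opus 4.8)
The plan is to reduce the absolute convergence of $A$, $C$, and $S$ to exponential decay of their integrands as $s \to \pm\infty$. On any compact $s$-interval the three integrands are continuous (the functions $u$, $v$ are continuous, $h$ is continuous, and $E$ is bounded so the exponential weight is finite), so the only issue is the behavior of the tails, and it suffices to produce integrable bounds there.

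First I would analyze the weight $e^{-\int_0^s E(\tau)\,d\tau}$. As $s \to +\infty$, write $\int_0^s E(\tau)\,d\tau = \beta s + \int_0^s (E(\tau) - \beta)\,d\tau$; by Lemma~\ref{lem2-s2.2b}\pref{li:Eintb} the second term stays bounded uniformly in $s$, so $e^{-\int_0^s E(\tau)\,d\tau} \prec e^{-\beta s}$. As $s \to -\infty$, set $s = -L^-$ with $L^- \to +\infty$; then $\int_0^s E(\tau)\,d\tau = -\int_{-L^-}^0 E(\tau)\,d\tau = \alpha L^- - \int_{-L^-}^0 (E(\tau) + \alpha)\,d\tau$, and Lemma~\ref{lem2-s2.2b}\pref{li:Einta} shows the last term is bounded, whence $e^{-\int_0^s E(\tau)\,d\tau} \prec e^{\alpha s}$ as $s \to -\infty$.

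Next I would bound the remaining factors. By Lemma~\ref{lem1-s2.2b}, $u(s) + v(s)$ is bounded on all of $\RR$ (indeed $u(s)+v(s) \to -1$ as $s \to +\infty$ and $u(s)+v(s)\to 1$ as $s \to -\infty$). Since $h$ is continuous near $(0,0)$ and $\ell(s) \to (0,0)$ as $s \to \pm\infty$, the factor $h(a(s), b(s))$ is bounded; and of course $|\cos(\omega s)|, |\sin(\omega s)| \leqs 1$. Combining these observations, each of the three integrands is $\prec e^{-\beta s}$ as $s \to +\infty$ and $\prec e^{\alpha s}$ as $s \to -\infty$. Since $\alpha, \beta > 0$, both tails are absolutely integrable, so $A$, $C$, and $S$ are absolutely convergent.

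The only point requiring any care — and it is minor — is correctly tracking the sign of $\int_0^s E(\tau)\,d\tau$ for $s < 0$, so that the exponential weight is seen to decay (not grow) at $-\infty$; once this is done, the estimate is a direct application of Lemmas~\ref{lem1-s2.2b} and~\ref{lem2-s2.2b}. I do not expect any substantial obstacle.
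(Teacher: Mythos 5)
Your proof is correct and takes essentially the same route as the paper's: both reduce absolute convergence to exponential decay of the weight $e^{-\int_0^s E(\tau)\,d\tau}$ at $\pm\infty$ by comparing $E$ to its asymptotic values $-\alpha$ and $\beta$. The only (cosmetic) difference is that you invoke the boundedness of $\int_0^s (E(\tau)-\beta)\,d\tau$ and $\int_s^0 (E(\tau)+\alpha)\,d\tau$ from Lemma~\ref{lem2-s2.2b} directly, obtaining the full rates $e^{-\beta s}$ and $e^{\alpha s}$, whereas the paper re-derives the tail bound from the pointwise smallness of $E+\alpha$ and $E-\beta$ on the far tails, settling for the rates $e^{\alpha s/2}$ and $e^{-\beta s/2}$; either way the tails are integrable.
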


\begin{proof}
Let us write
\begin{equation*}
\begin{split}
A = &\int_{-\infty}^{-L_0}(u(s) + v(s)) h(a(s), b(s))
e^{-\int_{0}^s E(\tau) d \, \tau} \, ds\\
&+ \int_{-L_0}^{L_0}(u(s) + v(s)) h(a(s), b(s))
e^{-\int_{0}^s E(\tau) d \, \tau} \, ds \\
&+ \int_{L_0}^{\infty} (u(s) + v(s)) h(a(s), b(s))
e^{-\int_{0}^s E(\tau) d \, \tau} \, ds.
\end{split}
\end{equation*}
We write the first integral as
\begin{equation*}
\int_{-\infty}^{-L_0} (u(s) + v(s)) h(a(s), b(s)) e^{\alpha s}
e^{-\int_{0}^s (E(\tau)+\alpha) d \, \tau} \, ds
\end{equation*}
and make $L_0$ sufficiently large so that $|E(\tau) + \alpha| <
\frac{1}{2} \alpha$ for all $\tau \in (-\infty, -L_0)$. This
integral is convergent since the integrand is $< K e^{\frac{1}{2}
\alpha s}$ for all  $s \in (- \infty, -L_0)$. For the convergence
of the third integral we rewrite it as
\begin{equation*}
\int_{L_0}^{\infty} (u(s) + v(s)) h(a(s), b(s)) e^{-\beta s}
e^{-\int_{0}^s (E(\tau) - \beta) d \, \tau} \, ds
\end{equation*}
and observe that $|E(\tau) - \beta| < \frac{\beta}{2}$ for $\tau
\in [L_0, \infty)$ provided that $L_0$ is sufficiently large.
The proofs for $C$ and $S$ are similar.
\end{proof}

\subsection{Poincar\'e sections and conversion of coordinates} \label{s2.3}
In this subsection we introduce the Poincar\'e sections
$\Sigma^{\pm}$. Since various sets of phase variables have
appeared in Sections ~\ref{s2.1} and ~\ref{s3.1}, we also need to
know how to explicitly convert coordinates from one set to another
on $\Sigma^{\pm}$.

\vspace{0.2cm}

\refstepcounter{psect} \label{sss:poincare} \noindent {\bfseries
\sffamily \ref{sss:poincare}. The Poincar\'e Sections} {\mathversion{bold} $\mathsf{\Sigma^{\pm}}$}. \ Recall
that $\{ \ell(s): s \in (-\infty, \infty) \}$ is the homoclinic
loop of equation ~(\ref{f1-s1.1}). Given $\varepsilon
> 0$ sufficiently small, let $L^+$ and $-L^-$ be such that
\begin{equation}\label{f1-s2.3a}
\begin{aligned}
\xi(-L^-) &= a(-L^-) + q_1(a(-L^-), b(-L^-)) = 0 \\
\eta(-L^-) &= b(-L^-) + q_2(a(-L^-), b(-L^-)) = \varepsilon \\
\xi(L^+) &= a(L^+) + q_1(a(L^+), b(L^+))= \varepsilon \\
\eta(L^+) &= b(L^+) + q_2(a(L^+), b(L^+)) = 0
\end{aligned}
\end{equation}
where $\xi$ and $\eta$ are the variables defined through
~(\ref{f1-s2.1a}). Let
\begin{equation*}
\wh{K}_0 = \max_{\substack{\theta \in \mbb{S}^{1}\\ \mu \in [0, \mu_0]}}
\{ |\phi(\theta;
\mu)|, \, |\psi(\theta; \mu)| \}
\end{equation*}
where $\phi(\theta; \mu)$ and $\psi(\theta; \mu)$ are as in
Section~\ref{s2.1}\ref{sss:formb}. We define two sections in
$\mscr{U}_{\varepsilon}$, denoted $\Sigma^-$ and $\Sigma^+$, as
follows.
\begin{equation}\label{f2-s2.3}
\begin{aligned}
\Sigma^- &= \{ (x, y, \theta) : s = -L^-, \; |z| \leqs (\wh{K}_0
+1) \mu, \;
\theta \in \mbb{S}^{1} \} \\
\Sigma^+ &= \{ (x, y, \theta) : s = L^+, \; \frac{1}{10} (-\rho A)
(\wh{K}_0 + 1) e^{\frac{1}{2} \beta L^+} \mu \leqs z \\
& \ \ \ \ \ \ \ \leqs 10 (-\rho A) (\wh{K}_0 + 1) e^{2 \beta L^+}
\mu, \; \theta \in \mbb{S}^{1} \}
\end{aligned}
\end{equation}
where $s$ and $z$ are as in ~(\ref{f3-s3.1}). We construct the
flow-induced map $\mscr{F}_{\mu}$ in two steps.

\begin{enumerate}[(1)]
\item
Starting from $\Sigma^-$, the solutions of equation
~(\ref{f3-s1.1}) move out of $\mscr{U}_{\varepsilon}$,
following the homoclinic loop of equation ~(\ref{f1-s1.1}) to
eventually hit $\Sigma^+$. This defines a flow-induced map from
$\Sigma^-$ to $\Sigma^+$, which we denote as $\mscr{M}:
\Sigma^- \to \Sigma^+$. We will prove that $\mscr{M}(\Sigma^-)
\subset \Sigma ^+$.
\item
Starting from $\Sigma^+$, the solutions of equation
~(\ref{f3-s1.1}) stay inside of $\mscr{U}_{\varepsilon}$,
carrying $\Sigma^+$ into $\Sigma^-$. This map we denote as
$\mscr{N}$.
\end{enumerate}

We define $\mscr{F}_{\mu} = \mscr{N} \circ \mscr{M}$.
Observe that the variables $(s, Z, \theta)$ of Section~\ref{s3.1}
are suitable for computing $\mscr{M}$ and $({\mathbb X},
{\mathbb Y}, \theta)$ are suitable for computing $\mscr{N}$.
To properly compose $\mscr{N}$ and $\mscr{M}$, we
need to know how to convert from $(s, Z, \theta)$ to $({\mathbb
X}, {\mathbb Y}, \theta)$ on $\Sigma^{\pm}$ and vice-versa.

\vspace{0.1cm}

\noindent
{\bfseries The new parameter} $\mbf{p}$. As stated earlier, we
regard $\mu$ as the only parameter of system ~(\ref{f3-s1.1}). We
make a coordinate change on this parameter by letting $p = \ln
\mu$ and we regard $p$, not $\mu$, as our bottom-line parameter. In
other words, we regard $\mu$ as a shorthand for $e^p$ and all
functions of $\mu$ are thought of as functions of $p$.
Observe that $\mu \in (0, \mu_0]$ corresponds to $p \in (-\infty,
\ln \mu_0]$. This is a {\bfseries \itshape very important conceptual point}
because by regarding a function $F(\mu)$ of $\mu$ as a function of
$p$, we have
\begin{equation*}
\partial_p F(\mu) = \mu \partial_{\mu} F(\mu).
\end{equation*}
Therefore, thinking of $F(\mu)$ as a function of $p$ produces a
$C^3$ norm that is completely different from the one obtained by
thinking of $F(\mu)$ as a function of $\mu$.

\begin{notation}
In order to apply the theory of rank
one maps~\cite{WqYls2001, WqYls2008}, we need to control the
$C^3$ norm of $\mscr{F}_{\mu}$. In particular, we must
estimate the $C^3$ norms of certain quantities with respect to
various sets of variables on relevant domains. The derivation of
the flow-induced maps $\{ \mscr{F}_{\mu} \}$ involves a composition
of maps and multiple coordinate changes. To facilitate the
presentation, from this point on we adopt specific conventions for
indicating controls on magnitude. For a given constant, we write
${\mathcal O}(1)$, ${\mathcal O}(\varepsilon)$, or ${\mathcal
O}(\mu)$ to indicate that the magnitude of the constant is bounded
by $K$, $K \varepsilon$, or $K(\varepsilon) \mu$, respectively. For
a function of a set $V$ of variables on a specific domain, we
write ${\mathcal O}_V(1), {\mathcal O}_V(\varepsilon)$ or
${\mathcal O}_V(\mu)$ to indicate that the $C^3$ norm of the
function on the specified domain is bounded by $K$, $K \varepsilon$,
or $K(\varepsilon) \mu$, respectively. We choose to specify the
domain in the surrounding text rather than explicitly involving it
in the notation. For example, $ {\mathcal O}_{{\mathbb X}_0,
{\mathbb Y}_0, \theta, \mu}(\varepsilon)$ represents a function of
${\mathbb X}_0$, ${\mathbb Y}_0$, $\theta$, and $\mu$, the $C^3$ norm of
which is bounded above by $K \varepsilon$ on a domain explicitly
given in the surrounding text. Similarly, ${\mathcal O}_{Z,
\theta, p}(\mu)$ represents a function of $Z$, $\theta$, and $p$, the
$C^3$ norm of which is bounded above by $K(\varepsilon) \mu$.
\end{notation}

\vspace{0.2cm}

\refstepcounter{psect} \label{sss:conv1} \noindent {\bfseries
\sffamily \ref{sss:conv1}.  Conversion on} {\mathversion{bold} $\mathsf{\Sigma^-}$}. \ The section $\Sigma^-$
is defined by $s = -L^-$. A point $q \in \Sigma^-$ is uniquely
determined by a pair $(Z, \theta)$. First we compute the
coordinates ${\mathbb X}$ and ${\mathbb Y}$ for a point given in
$(Z, \theta)$-coordinates on $\Sigma^-$. Recall that $p = \ln
\mu$.
\begin{proposition}\label{prop-s2.3b}
For $\mu \in (0, \mu_0]$ and $(Z, \theta) \in \Sigma^-$, we have
\begin{align*}
{\mathbb X} &= (1 + {\mathcal O}_{\theta, p}(\varepsilon) + \mu
{\mathcal O}_{Z, \theta, p}(1)) Z - {\mathcal O}_{\theta, p}(1)
\\
{\mathbb Y} &= \mu^{-1} \varepsilon +  {\mathcal O}_{Z, \theta,
p}(1).
\end{align*}
\end{proposition}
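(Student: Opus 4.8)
The plan is to trace the definitions of $\mbb{X}$ and $\mbb{Y}$ backwards through the chain of coordinate changes $(\mbb{X},\mbb{Y}) \to (\mbf{X},\mbf{Y}) \to (X,Y) \to (\xi,\eta)$ of Section~\ref{s2.1}, and then express $(\xi,\eta)$ on $\Sigma^-$ in terms of the loop coordinates $(s,z) = (-L^-, z)$ via~\pref{f3-s3.1}, finally substituting $z = \mu Z$ from~\pref{f6-s3.1}. Concretely, $\mbb{X} = \mu^{-1}\mbf{X}$ and $\mbb{Y} = \mu^{-1}\mbf{Y}$ by~\pref{f1a-s2.1d}, and~\pref{e:convertX}--\pref{e:convertY} give
\begin{align*}
\mbb{X} &= \mu^{-1}\xi - \phi(\theta;\mu) - W^u(\eta - \mu\psi(\theta;\mu), \theta; \mu),\\
\mbb{Y} &= \mu^{-1}\eta - \psi(\theta;\mu) - W^s(\xi - \mu\phi(\theta;\mu), \theta; \mu).
\end{align*}
On $\Sigma^-$ we have from~\pref{f1-s2.3a} that $\xi(-L^-) = 0$ (the section sits on the stable manifold side), so for a general point $q = (s,z) = (-L^-, z)$ on $\Sigma^-$ with $|z| \leqs (\wh{K}_0+1)\mu$, the transformation~\pref{f1-s2.1a} together with a Taylor expansion around $\ell(-L^-)$ gives $\xi = v'(-L^-)\text{-type terms} \cdot z + O(z^2)$, i.e. $\xi = c\, z + O_{Z,\theta,p}(\mu^2)$ for a constant $c = 1 + O(\varepsilon)$ (since $\xi$ is a near-identity perturbation of $x$ and the normal direction $\mbf{e}(s)$ is close to the $\xi$-axis near the fixed point), while $\eta = \varepsilon + O(\varepsilon)\cdot z + O(z^2)$ by the second line of~\pref{f1-s2.3a} together with the same Taylor expansion.

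The key steps, in order, are: (1) write out the composition of~\pref{f1a-s2.1d}, \pref{e:convertX}--\pref{e:convertY}, and~\pref{f3-s3.1} explicitly; (2) use~\pref{f1-s2.3a} to pin down the base point, so that $\xi$ and $\eta - \varepsilon$ are both $O(\mu)$ on $\Sigma^-$, then Taylor expand~\pref{f1-s2.1a} in $z$ at $\ell(-L^-)$, keeping track of which coefficients are $1 + O(\varepsilon)$ versus $O(\varepsilon)$; (3) substitute $z = \mu Z$ and divide by $\mu$, so that $\mu^{-1}\xi$ becomes $(1 + O_{\theta,p}(\varepsilon) + \mu O_{Z,\theta,p}(1))Z$ and $\mu^{-1}\eta$ becomes $\mu^{-1}\varepsilon + O_{Z,\theta,p}(1)$; (4) absorb $\phi$, $\psi$, and the analytic functions $W^u$, $W^s$ — which by Propositions~\ref{prop-2.2} and~\ref{prop-2.3} have $C^3$ norms bounded by $K$ and vanish when their first argument vanishes — into the $O_{\theta,p}(1)$ and $O_{Z,\theta,p}(1)$ error terms, noting that $W^s(\xi - \mu\phi,\theta;\mu)$ has first argument $O(\mu)$ hence is itself $O(\mu)$, contributing only to $O_{Z,\theta,p}(\mu) \subset O_{Z,\theta,p}(1)$, whereas $W^u(\eta - \mu\psi,\theta;\mu)$ has first argument $\approx \varepsilon$ and contributes an $O_{\theta,p}(1)$ term.

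The main obstacle I anticipate is bookkeeping of the $C^3$ norms under the parameter reinterpretation $p = \ln\mu$: one must verify that each coefficient in the $z$-expansion of~\pref{f1-s2.1a}, and each of $\phi$, $\psi$, $W^u$, $W^s$, retains a $C^3$ bound (with respect to $\theta$ and $p$, not $\theta$ and $\mu$) of the claimed order after being composed and after $z$ is replaced by $\mu Z$. Since $\partial_p = \mu\partial_\mu$, differentiating an $O(\mu)$ quantity in $p$ still yields $O(\mu)$, and differentiating a quantity whose $\mu$-derivatives are $O(1)$ yields $O(\mu)$ in the $p$-variable; this is precisely what makes the error terms involving $\mu$ collapse to the stated orders. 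The geometric input — that $\xi$ vanishes on $\Sigma^-$ and that $\eta \approx \varepsilon$ there — is immediate from~\pref{f1-s2.3a}, so the real work is this norm accounting, which is routine but must be carried out carefully to justify replacing the explicit lower-order terms by the $\mathcal{O}$-symbols.
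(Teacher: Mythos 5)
Your proposal is correct and follows essentially the same route as the paper: trace $\mbb{X}, \mbb{Y}$ back through~\pref{f1a-s2.1d} and~\pref{e:convertX}--\pref{e:convertY} to $(\xi,\eta)$, use~\pref{f1-s2.3a} to cancel the constant terms so that $\xi = (1+\mcal{O}(\ve)+ z\,\mcal{O}_z(1))z$ and $\eta = \ve + (\mcal{O}(\ve)+z\,\mcal{O}_z(1))z$, substitute $z=\mu Z$, and absorb $\phi$, $\psi$, $W^u$, $W^s$ into the error terms exactly as you describe (with $W^u$ evaluated near $\ve$ giving the $\mcal{O}_{\thet,p}(1)$ term and $W^s$ evaluated at an $\mcal{O}(\mu)$ argument being harmless). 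One cosmetic slip: $\xi(-L^-)=0$, $\eta(-L^-)=\ve$ places $\ell(-L^-)$ near the local \emph{unstable} manifold, not the stable one, but this does not affect the computation.
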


\begin{proof}
By definition, $s =-L^-$ on $\Sigma^-$. Let $q \in \Sigma^-$ be
represented by $(z, \theta)$.  Using~\pref{f1-s2.3a}, we have
\begin{equation}\label{f1-s2.3b}
\begin{aligned}
a(-L^-) &= Q_1(0, \varepsilon) = {\mathcal O}(\varepsilon^2) \\
b(-L^-) &= \varepsilon + Q_2(0, \varepsilon) = \varepsilon +
{\mathcal O}(\varepsilon^2).
\end{aligned}
\end{equation}
We also have
\begin{equation}\label{f2-s2.3b}
u(-L^-) = {\mathcal O}(\varepsilon), \quad v(-L^-) = 1 - {\mathcal
O}(\varepsilon).
\end{equation}

We compute values of ${\bf X}$ and ${\bf Y}$ for $q$. Using
~(\ref{f1-s2.3a}) and ~(\ref{f1-s2.3b}),
\begin{align*}
\xi &= a(-L^-) + v(-L^-) z + q_1(a(-L^-)+ v(-L^-)z, \ b(-L^-)-u(-L^-) z)\\
&= v(-L^-) z + q_1(a(-L^-)+ v(-L^-)z, \, b(-L^-)-u(-L^-) z) - q_1(a(-L^-), b(-L^-))\\
&= (1 + {\mathcal O}(\varepsilon)+ z h_{\xi}(z)) z.
\end{align*}
Similarly, we have
\begin{align*}
\eta &= b(-L^-) - u(-L^-) z + q_2(a(-L^-)+ v(-L^-) z, \ b(-L^-)-u(-L^-) z)\\
&= \varepsilon - u(-L^-) z + q_2(a(-L^-)+ v(-L^-) z, \, b(-L^-)-u(-L^-) z)
- q_2(a(-L^-), b(-L^-))\\
&= \varepsilon  + ({\mathcal O}(\varepsilon)+z h_{\eta}(z)) z.
\end{align*}
The functions $h_{\xi}$ and $h_{\eta}$ are analytic on
$|z| < (\wh{K}_0 + 1)\mu$ and we have $h_{\xi}(z) = {\mathcal O}_{z} (1)$
and $h_{\eta}(z) = {\mathcal O}_z(1)$. Substituting $\xi$ and $\eta$
above into~\pref{e:convertX}, we obtain
\begin{align*}
{\bf X} &= (1 + {\mathcal O}(\varepsilon)+ z h_{\xi}(z)) z - \mu
\phi(\theta; \mu) - \mu W^u(\varepsilon - \mu \psi(\theta;
\mu) + ({\mathcal O}(\varepsilon)+z h_{\eta}(z)) z, \theta; \mu) \\
&= (1 + {\mathcal O}(\varepsilon)+ z h_{\xi}(z)) z - \mu
\phi(\theta; \mu) - \mu W^u(\varepsilon
- \mu \psi(\theta; \mu), \theta; \mu) \\
&\qquad {}- \mu W^u(\varepsilon - \mu \psi(\theta; \mu) +
({\mathcal O}(\varepsilon)+z h_{\eta}(z)) z, \theta; \mu) + \mu W^u(\varepsilon - \mu \psi(\theta; \mu), \theta;
\mu).
\end{align*}
This implies
\begin{equation}\label{fa1-s2.3b}
{\bf X} = (1 + {\mathcal O}_{\theta, \mu}(\varepsilon) + z \hat
h(z, \theta; \mu))z -\mu {\mathcal O}_{\theta, \mu}(1)
\end{equation}
where $\hat h(z, \theta; \mu)$ is analytic in $z$, $\theta$, and $\mu$ and
satisfies $\hat h = {\mathcal O}_{z, \theta, \mu}(1)$. Now
substitute
\begin{equation*}
{\bf X} = \mu {\mathbb X}, \quad z = \mu Z
\end{equation*}
into~\pref{fa1-s2.3b} and note that $|Z| < \wh{K}_0 + 1$. We obtain the
claimed formula for ${\mathbb X}$.

For the ${\mathbb Y}$-component, we substitute $\xi$ and $\eta$ above
into~\pref{e:convertY} to obtain
\begin{equation*}
{\bf Y} = \varepsilon + ({\mathcal O}(\varepsilon)+z
h_{\eta}(z))
z  - \mu \psi(\theta; \mu) - \mu W^s((1 + {\mathcal O}(\varepsilon)+ z h_{\xi}(z)) z
-\mu \phi(\theta; \mu), \theta; \mu).
\end{equation*}
Set ${\bf Y} = \mu {\mathbb Y}$ and $z = \mu Z$ and note that $|Z| <
\wh{K}_0 + 1$. We obtain the claimed formula for ${\mathbb Y}$.
\end{proof}

\begin{corollary}\label{coro1-s2.3b}
On $\Sigma^-$, we have
\begin{equation*}
Z = (1 + {\mathcal O}_{\theta, p}(\varepsilon) + \mu {\mathcal
O}_{{\mathbb X}, \theta, p}(1))({\mathbb X} + {\mathcal
O}_{\theta, p}(1)).
\end{equation*}
\end{corollary}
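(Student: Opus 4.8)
The plan is to invert the relation furnished by Proposition~\ref{prop-s2.3b}, which on $\Sigma^{-}$ reads
\[
{\mathbb X} = (1 + a(\theta, p) + \mu\, b(Z, \theta, p))\, Z - c(\theta, p),
\]
where for brevity $a = {\mathcal O}_{\theta, p}(\varepsilon)$, $b = {\mathcal O}_{Z, \theta, p}(1)$, and $c = {\mathcal O}_{\theta, p}(1)$. The point is that, for fixed $(\theta, p)$, the right-hand side is a near-identity affine-type function of $Z$, so it can be inverted to express $Z$ as a function of $({\mathbb X}, \theta, p)$ on the ${\mathbb X}$-range corresponding to $\Sigma^{-}$ (that is, on the image of $\{ |Z| < \widehat{K}_{0}+1 \}$ under the map above); it then remains to repackage the inverse in the asserted form.

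For the inversion I would rewrite the relation as the fixed-point equation
\[
Z = T(Z) := \frac{{\mathbb X} + c(\theta, p) - \mu\, b(Z, \theta, p)\, Z}{1 + a(\theta, p)}.
\]
Because $|a| \leqs K\varepsilon$ and $|\mu\, b|, |\mu\, \partial_{Z} b| \leqs K(\varepsilon)\mu_{0}$ with $|Z|$ bounded by $\widehat{K}_{0} + 1$, the denominator lies in $(\tfrac{1}{2}, \tfrac{3}{2})$ and $\partial_{Z} T = {\mathcal O}(\mu)$, so $T$ is a contraction and has a unique fixed point $Z = Z({\mathbb X}, \theta, p)$ (equivalently, $\partial_{Z}{\mathbb X} = 1 + {\mathcal O}_{\theta, p}(\varepsilon) + \mu\, {\mathcal O}_{Z, \theta, p}(1)$ is bounded away from $0$, so $Z \mapsto {\mathbb X}$ is a diffeomorphism onto its image). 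Differentiating the fixed-point equation, using $\partial_{p} = \mu\, \partial_{\mu}$, and invoking the $C^{3}$ bounds on $a$, $b$, $c$ from Proposition~\ref{prop-s2.3b} shows that $Z({\mathbb X}, \theta, p)$ has $C^{3}$ norm $\leqs K$ on the relevant domain; in particular $b(Z({\mathbb X}, \theta, p), \theta, p) = {\mathcal O}_{{\mathbb X}, \theta, p}(1)$.

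To extract the stated identity, note that ${\mathbb X} + c = (1 + a + \mu\, b(Z))\, Z$ gives, identically in $({\mathbb X}, \theta, p)$ with $Z = Z({\mathbb X}, \theta, p)$,
\[
Z = \frac{{\mathbb X} + c}{1 + a + \mu\, b(Z)}, \qquad
\frac{1}{1 + a + \mu\, b(Z)} = \frac{1}{1 + a} - \frac{\mu\, b(Z)}{(1 + a)(1 + a + \mu\, b(Z))}.
\]
The first term on the right equals $1 + \tfrac{-a}{1+a} = 1 + {\mathcal O}_{\theta, p}(\varepsilon)$, which is independent of ${\mathbb X}$ because $a$ does not depend on $Z$, and the second term equals $\mu\, {\mathcal O}_{{\mathbb X}, \theta, p}(1)$ by the previous step. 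Hence the coefficient of ${\mathbb X} + c$ is $1 + {\mathcal O}_{\theta, p}(\varepsilon) + \mu\, {\mathcal O}_{{\mathbb X}, \theta, p}(1)$, and taking ${\mathcal O}_{\theta, p}(1) := c$ yields precisely
\[
Z = \bigl(1 + {\mathcal O}_{\theta, p}(\varepsilon) + \mu\, {\mathcal O}_{{\mathbb X}, \theta, p}(1)\bigr)\bigl({\mathbb X} + {\mathcal O}_{\theta, p}(1)\bigr).
\]

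The only genuine work is the $C^{3}$ bookkeeping in the inversion step: one must check that solving the fixed-point equation and composing with $Z({\mathbb X}, \theta, p)$ does not degrade the ${\mathcal O}$-bounds, in particular that derivatives in $p = \ln\mu$ of the $\mu$-weighted pieces stay ${\mathcal O}(\mu)$, using $\partial_{p}(\mu\, b) = \mu\, b + \mu\, \partial_{p} b = \mu\, {\mathcal O}(1)$. This is routine once the contraction constant is known to be ${\mathcal O}(\mu)$ and all the data have $C^{3}$ norm $\leqs K$; the remainder is the elementary geometric-series manipulation above.
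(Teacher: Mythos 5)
Your proof is correct and follows essentially the same route as the paper: both invert the forward relation of Proposition~\ref{prop-s2.3b} and then repackage the inverse in the asserted form. The only cosmetic difference is that the paper performs the inversion on the unrescaled analytic identity~\pref{fa1-s2.3b} (writing $z$ as an analytic function of $\mathbf{W} = \mathbf{X} + \mu\,\mathcal{O}_{\theta,\mu}(1)$ and then rescaling), whereas you invert the rescaled relation directly by a contraction argument; the $C^{3}$ bookkeeping you sketch is exactly what the paper's terse ``this equality is invertible'' elides.
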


\begin{proof}
We start with ~(\ref{fa1-s2.3b}). This
equality is invertible and we have
\begin{equation}\label{f3-s2.3b}
z = (1 + {\mathcal O}_{\theta, \mu}(\varepsilon) + {\bf W} \tilde
h({\bf W}, \theta; \mu)) {\bf W}
\end{equation}
where
\begin{equation*}
{\bf W} = {\bf X} + \mu {\mathcal O}_{\theta, \mu}(1)
\end{equation*}
and $\tilde h({\bf W}, \theta; \mu)$ is analytic in ${\bf W}$,
$\theta$, and $\mu$ and satisfies $\tilde h = {\mathcal O}_{{\bf W},
\theta, \mu}(1)$. Writing ~(\ref{f3-s2.3b}) in terms of $Z$ and
${\mathbb X}$, we have
\begin{equation*}
Z = (1 + {\mathcal O}_{\theta, p}(\varepsilon) + \mu {\mathcal
O}_{{\mathbb X}, \theta, p}(1))({\mathbb X} + {\mathcal
O}_{\theta, p}(1)).
\end{equation*}
\end{proof}

\begin{corollary}\label{coro2-s2.3b}
On $\Sigma^-$, we have
\begin{equation*}
{\mathbb Y} = \mu^{-1} \varepsilon + {\mathcal O}_{{\mathbb X},
\theta, p}(1).
\end{equation*}
\end{corollary}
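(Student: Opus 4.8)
The plan is to obtain this corollary by substituting the formula for $Z$ in terms of $({\mathbb X}, \theta, p)$ supplied by Corollary~\ref{coro1-s2.3b} into the formula for ${\mathbb Y}$ in terms of $(Z, \theta, p)$ supplied by Proposition~\ref{prop-s2.3b}, and then tracking how the relevant $C^3$ norm behaves under this composition. Concretely, Proposition~\ref{prop-s2.3b} writes ${\mathbb Y} = \mu^{-1} \varepsilon + \Phi(Z, \theta, p)$ on $\Sigma^-$, where $\Phi = {\mathcal O}_{Z, \theta, p}(1)$, i.e.\ $\Phi$ is a $C^3$ function whose $C^3$ norm is bounded by a constant $K$ on the domain $\{ |Z| \leqs \wh{K}_0 + 1, \ \theta \in \mbb{S}^1, \ p \leqs \ln \mu_0 \}$.

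Next I would record that Corollary~\ref{coro1-s2.3b} expresses $Z$, on $\Sigma^-$, as a function $Z = \zeta({\mathbb X}, \theta, p)$ built from the blocks $1 + {\mathcal O}_{\theta, p}(\varepsilon) + \mu {\mathcal O}_{{\mathbb X}, \theta, p}(1)$ and ${\mathbb X} + {\mathcal O}_{\theta, p}(1)$. Each block is a $C^3$ function of its arguments whose $C^3$ norm is controlled by a constant (of the form $K$ or $K\varepsilon$, and the $\mu {\mathcal O}(1)$ term is in fact ${\mathcal O}(\mu)$ since $\mu = e^p \leqs \mu_0$); since sums and products of functions with controlled $C^3$ norms again have controlled $C^3$ norms, $\zeta$ itself is a $C^3$ function of $({\mathbb X}, \theta, p)$ with $C^3$ norm bounded by a constant $K$ on the corresponding domain. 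Moreover, by construction $Z = \zeta({\mathbb X}, \theta, p)$ takes values with $|Z| \leqs \wh{K}_0 + 1$, so the image of the map $({\mathbb X}, \theta, p) \mapsto (\zeta({\mathbb X}, \theta, p), \theta, p)$ lies in the domain on which $\Phi$ enjoys its $C^3$ bound.

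Finally I would compose: ${\mathbb Y} = \mu^{-1} \varepsilon + \Phi(\zeta({\mathbb X}, \theta, p), \theta, p)$ on $\Sigma^-$. By the chain rule, the $C^3$ norm of $({\mathbb X}, \theta, p) \mapsto \Phi(\zeta({\mathbb X}, \theta, p), \theta, p)$ is bounded by a polynomial expression in the $C^3$ norms of $\Phi$ and $\zeta$, hence by a constant $K$. Thus $\Phi(\zeta(\cdot), \cdot, \cdot) = {\mathcal O}_{{\mathbb X}, \theta, p}(1)$, which is exactly the assertion ${\mathbb Y} = \mu^{-1} \varepsilon + {\mathcal O}_{{\mathbb X}, \theta, p}(1)$.

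The only real work here is the bookkeeping of the $C^3$ norm under composition and under the change of parameter from $\mu$ to $p$. The point to watch is that all the implicit constants must remain independent of $\mu$ (equivalently of $p$), which is guaranteed because each of the $K$'s appearing in Section~\ref{s2.1} and in Corollary~\ref{coro1-s2.3b} is already $\mu$-independent, and because $\partial_p = \mu \partial_\mu$ together with $\mu \leqs \mu_0$ keeps the $p$-derivatives of the $\mu$-dependent terms under control; the $\theta$- and ${\mathbb X}$-derivatives are handled verbatim as in the proof of Proposition~\ref{prop-s2.3b}. I expect no genuine obstacle here — this is a direct computation in the sense used repeatedly in this section.
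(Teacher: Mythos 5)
Your proposal is correct and is essentially the paper's own argument: the paper's proof of this corollary consists precisely of regarding $\mathbb{Y}$ as a function of $(Z,\theta,p)$ via Proposition~\ref{prop-s2.3b} and then substituting $Z$ as a function of $(\mathbb{X},\theta,p)$ via Corollary~\ref{coro1-s2.3b}. Your additional bookkeeping of the $C^3$ norms under composition and under the parameter change $\mu \mapsto p$ is exactly the (unwritten) content of that one-line proof.
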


\begin{proof}
We first regard ${\mathbb Y}$ as a
function of $Z$, $\theta$, and $p$ using the formula for ${\mathbb Y}$ in
Proposition ~\ref{prop-s2.3b} and then regard $Z$ as a function of
${\mathbb X}$, $\theta$, and $p$ using Corollary ~\ref{coro1-s2.3b}.
\end{proof}

\begin{remark}
Terms of the form $\mu {\mathcal O}_{{\mathbb X}, \theta, p}(1)$
are not equivalent to terms of the form ${\mathcal O}_{{\mathbb
X}, \theta, p}(\mu)$. A term of the form $\mu {\mathcal
O}_{{\mathbb X}, \theta, p}(1)$ has $C^3$ norm bounded above by $K
\mu$ while a term of the form ${\mathcal O}_{{\mathbb X}, \theta,
p}(\mu)$ has $C^3$ norm bounded above by $K(\varepsilon) \mu$. In
estimates in Section~\ref{s2.3}\ref{sss:conv1}
and~\ref{s2.3}\ref{sss:conv2}, we always have the former, not the
latter.
\end{remark}

\vspace{0.2cm}

\refstepcounter{psect} \label{sss:conv2} \noindent {\bfseries
\sffamily \ref{sss:conv2}. Conversion on} {\mathversion{bold} $\mathsf{\Sigma^+}$}. \ On
$\Sigma^+$ we need to write ${\mathbb X}$ and ${\mathbb Y}$ in
terms of $Z$.

\begin{proposition}\label{prop-s2.3c}
On $\Sigma^+$ we have
\begin{align*}
{\mathbb X} &= \mu^{-1} \varepsilon + {\mathcal O}_{Z, \theta, p}(1)\\
{\mathbb Y} &= (1 + {\mathcal O}_{\theta, p}(\varepsilon) + \mu
{\mathcal O}_{Z, \theta, p}(1)) Z - {\mathcal O}_{\theta, p}(1).
\end{align*}
\end{proposition}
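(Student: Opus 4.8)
The plan is to prove Proposition~\ref{prop-s2.3c} by a computation that runs exactly parallel to the proof of Proposition~\ref{prop-s2.3b}, but with the two coordinate directions interchanged: whereas $\Sigma^{-}$ sits near $(0,\varepsilon)$ in the $(\xi,\eta)$--plane, the section $\Sigma^{+}$ sits near $(\varepsilon,0)$, so everywhere that $u$, $v$ and $\xi$, $\eta$ entered the earlier argument they trade places here. Consistent with this, the two identities to be proved are precisely those of Proposition~\ref{prop-s2.3b} with the roles of $\mathbb{X}$ and $\mathbb{Y}$ swapped.

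First I would record the boundary data at $s = L^{+}$. From the defining relations~\pref{f1-s2.3a}, namely $\xi(L^{+}) = \varepsilon$ and $\eta(L^{+}) = 0$, together with the inverse change of coordinates~\pref{f2-s2.1a}, one gets $a(L^{+}) = \varepsilon + \mathcal{O}(\varepsilon^{2})$ and $b(L^{+}) = \mathcal{O}(\varepsilon^{2})$, whence~\pref{f2-s3.1} gives $u(L^{+}) = -1 + \mathcal{O}(\varepsilon)$ and $v(L^{+}) = \mathcal{O}(\varepsilon)$ --- the analogue of~\pref{f1-s2.3b}--\pref{f2-s2.3b} with $u$ and $v$ exchanged. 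I would also note once and for all that on $\Sigma^{+}$ one has $z = \mathcal{O}(\mu)$, so $Z = \mu^{-1}z$ is bounded (and bounded away from $0$), and that after shrinking $\mu_{0}$ relative to $\varepsilon$ the points $(a(L^{+}) + v(L^{+})z,\, b(L^{+}) - u(L^{+})z)$ stay well inside the fixed neighbourhood of the origin on which $q_{1}$, $q_{2}$, $W^{u}$, $W^{s}$ are analytic; this is a point to watch because, unlike on $\Sigma^{-}$, the $z$--interval defining $\Sigma^{+}$ has size of order $e^{2\beta L^{+}}\mu$ rather than order $\mu$.

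Next, using~\pref{f3-s3.1} to express $(x,y)$ on $\Sigma^{+}$ through $(s,z) = (L^{+},z)$ and substituting into $\xi = x + q_{1}(x,y)$, $\eta = y + q_{2}(x,y)$, I would add and subtract $q_{1}(a(L^{+}),b(L^{+}))$, $q_{2}(a(L^{+}),b(L^{+}))$ and invoke~\pref{f1-s2.3a}, exactly as in the earlier proof. Since $v(L^{+}) = \mathcal{O}(\varepsilon)$ while $-u(L^{+}) = 1 + \mathcal{O}(\varepsilon)$, and $\partial q_{i} = \mathcal{O}(\varepsilon)$ near $(\varepsilon,0)$, this gives
\begin{equation*}
\xi = \varepsilon + \big(\mathcal{O}(\varepsilon) + z\, h_{\xi}(z)\big)z, \qquad \eta = \big(1 + \mathcal{O}(\varepsilon) + z\, h_{\eta}(z)\big)z,
\end{equation*}
with $h_{\xi}$, $h_{\eta}$ analytic and $\mathcal{O}_{z}(1)$ on the relevant disc. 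I would then feed these into the conversion formulas~\pref{e:convertX} and~\pref{e:convertY}, using $W^{u}(0,\theta;\mu) = W^{s}(0,\theta;\mu) = 0$ and the uniform $C^{3}$ bounds on $\phi$, $\psi$, $W^{u}$, $W^{s}$ from Propositions~\ref{prop-2.2} and~\ref{prop-2.3}. In~\pref{e:convertX} the argument of $W^{u}$ is $\eta - \mu\psi = \mathcal{O}(\mu)$, so the $W^{u}$ term is negligible and one gets $\mathbf{X} = \varepsilon + \mu\,\mathcal{O}_{Z,\theta,p}(1)$; in~\pref{e:convertY} the argument of $W^{s}$ is $\xi - \mu\phi \approx \varepsilon$, so the $W^{s}$ term contributes at order $\mu$, and since $\xi - \varepsilon$ and the $z^{2}h_{\eta}$ part of $\eta$ both vanish to first order at $z = 0$ the resulting $Z$--dependence is of the form $\mu Z\cdot\mathcal{O}_{Z,\theta,p}(1)$; altogether $\mathbf{Y} = \mu\big[(1 + \mathcal{O}_{\theta,p}(\varepsilon) + \mu\,\mathcal{O}_{Z,\theta,p}(1))Z - \mathcal{O}_{\theta,p}(1)\big]$. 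As in Section~\ref{s2.3}\ref{sss:conv1}, passing from $\mu$ to $p = \ln\mu$ only tightens these $C^{3}$ bounds because $\partial_{p} = \mu\,\partial_{\mu}$. Dividing by $\mu$ according to~\pref{f1a-s2.1d} yields the two claimed identities.

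There is no conceptual difficulty beyond that of Proposition~\ref{prop-s2.3b}; the work is entirely bookkeeping, and that is where the only real care is needed. The points to keep straight are: separating the genuinely $\mathcal{O}(\varepsilon)$--small contributions (from $v(L^{+})$ and from $\partial q_{i}$ near the origin) from the $\mathcal{O}(1)$ contributions of $\phi$ and of the manifold functions; checking that the argument $\xi - \mu\phi \approx \varepsilon$ of $W^{s}$ remains within (a slight enlargement of) its domain of definition; and ensuring that the analyticity radii of $h_{\xi}$, $h_{\eta}$, $W^{u}$, $W^{s}$ comfortably exceed $e^{2\beta L^{+}}\mu$, which is what forces the choice of $\mu_{0}$ small relative to $\varepsilon$.
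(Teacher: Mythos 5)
Your proposal is correct and follows essentially the same route as the paper: record the boundary data $a(L^{+})=\varepsilon+\mathcal{O}(\varepsilon^{2})$, $b(L^{+})=\mathcal{O}(\varepsilon^{2})$, $u(L^{+})=-1+\mathcal{O}(\varepsilon)$, $v(L^{+})=\mathcal{O}(\varepsilon)$, expand $\xi=\varepsilon+(\mathcal{O}(\varepsilon)+zk_{\xi}(z))z$ and $\eta=(1+\mathcal{O}(\varepsilon)+zk_{\eta}(z))z$, substitute into~\pref{e:convertX}--\pref{e:convertY}, and rescale by $\mu^{-1}$ --- exactly the mirror of Proposition~\ref{prop-s2.3b}, which is what the paper does. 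Your added attention to the larger $z$-range $z\prec e^{2\beta L^{+}}\mu$ on $\Sigma^{+}$ and the resulting domain/analyticity check is a point the paper leaves implicit, but it does not change the argument.
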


\begin{proof}
On $\Sigma^+$, $s =L^+$. We have
\begin{equation}\label{f1-s3.1b}
\begin{aligned}
a(L^+) &= \varepsilon + Q_1(\varepsilon, 0) = \varepsilon +
{\mathcal
O}(\varepsilon^2)\\
b(L^+) &= Q_2(\varepsilon, 0) = {\mathcal O}(\varepsilon^2),
\end{aligned}
\end{equation}
and
\begin{equation}\label{f2-s3.1b}
u(L^+) = -1 + {\mathcal O}(\varepsilon), \quad v(L^+) = {\mathcal
O}(\varepsilon).
\end{equation}
Let $(z, \theta) \in \Sigma^+$.  We compute the values of ${\bf
X}$ and ${\bf Y}$ for this point. Using ~(\ref{f1-s3.1}) and
~(\ref{f1-s2.1a}), we have
\begin{align*}
\xi &= a(L^+) + v(L^+) z+ q_1(a(L^+)+ v(L^+) z, \ b(L^+)-u(L^+) z)\\
&= \varepsilon + {\mathcal O}(\varepsilon) z + q_1(a(L^+)+ v(L^+)
z,
\ b(L^+)-u(L^+) z) - q_1(a(L^+), b(L^+) ) \\
&= \varepsilon  + ({\mathcal O}(\varepsilon)+z k_{\xi}(z)) z.
\end{align*}
Similarly, we have
\begin{align*}
\eta &= b(L^+) - u(L^+) z + q_2(a(L^+)+ v(L^+) z, \ b(L^+)-u(L^+) z) \\
&= -u(L^+) z + q_2(a(L^+)+ v(L^+) z,
\ b(L^+)-u(L^+) z) - q_2(a(L^+), b(L^+) ) \\
&= (1 + {\mathcal O}(\varepsilon)+ z k_{\eta}(z))z.
\end{align*}
We now write ${\bf X}$ and ${\bf Y}$ in terms of $z$
using~\pref{e:convertX} and~\pref{e:convertY}.
The rest of the proof is similar to that of
Proposition ~\ref{prop-s2.3b}.
\end{proof}

\begin{corollary}\label{coro1-s2.3c}
If $L^+$ is sufficiently large, then ${\mathbb Y} > 1$ on
$\Sigma^+$.
\end{corollary}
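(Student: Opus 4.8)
The plan is to read the result off directly from the $\mathbb{Y}$-formula of Proposition~\ref{prop-s2.3c} together with the definition of $\Sigma^{+}$ in~\pref{f2-s2.3}. On $\Sigma^{+}$ we have
\begin{equation*}
\mathbb{Y} = \left( 1 + \mathcal{O}_{\theta, p}(\varepsilon) + \mu \, \mathcal{O}_{Z, \theta, p}(1) \right) Z - \mathcal{O}_{\theta, p}(1),
\end{equation*}
so $\mathbb{Y}$ agrees with $Z$ up to a multiplicative factor close to $1$ and an additive term of bounded size. First I would fix $\varepsilon$ and $\mu_{0}$ small enough that the multiplicative factor is at least $\tfrac{1}{2}$ for all $(Z, \theta) \in \Sigma^{+}$ and all $\mu \in (0, \mu_{0}]$; this is legitimate because the two error terms have $C^{3}$ norms bounded by $K \varepsilon$ and $K \mu$ respectively, with $K$ independent of $\varepsilon$ (see the remark following Corollary~\ref{coro2-s2.3b}). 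Letting $K_{1}$ be a constant bounding the $\mathcal{O}_{\theta, p}(1)$ term, we obtain $\mathbb{Y} \geqs \tfrac{1}{2} Z - K_{1}$ on $\Sigma^{+}$.

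Next I would extract a lower bound on $Z = \mu^{-1} z$ from the definition of $\Sigma^{+}$. By~\pref{f2-s2.3}, on $\Sigma^{+}$ we have $z \geqs \tfrac{1}{10} (-\rho A)(\wh{K}_{0} + 1) e^{\frac{1}{2} \beta L^{+}} \mu$, hence
\begin{equation*}
Z \geqs \frac{1}{10} (-\rho A)(\wh{K}_{0} + 1) e^{\frac{1}{2} \beta L^{+}}.
\end{equation*}
Here I would record that $-\rho A > 0$: since $\rho \in [\rho_{1}, \rho_{2}]$ and both $\rho_{1}$ and $\rho_{2}$ are negative multiples of $A^{-1} \sqrt{C^{2} + S^{2}}$, the product $\rho A$ is negative for $\rho$ between them. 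Moreover $\wh{K}_{0} + 1 \geqs 1$ and $\wh{K}_{0}$ is independent of $\varepsilon$ by Proposition~\ref{prop-2.2}. Combining the two displays,
\begin{equation*}
\mathbb{Y} \geqs \frac{1}{20} (-\rho A)(\wh{K}_{0} + 1) e^{\frac{1}{2} \beta L^{+}} - K_{1} \qquad \text{on } \Sigma^{+}.
\end{equation*}

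Since $\beta > 0$ and $\tfrac{1}{20}(-\rho A)(\wh{K}_{0} + 1)$ is a positive constant that does not depend on $L^{+}$, the right-hand side exceeds $1$ as soon as $L^{+}$ is sufficiently large (equivalently, $\varepsilon$ sufficiently small, which only reinforces the multiplicative estimate of the first paragraph), which gives the claim. The only point requiring care is the constant bookkeeping: one must be certain that the additive error $K_{1}$ does not itself grow with $L^{+}$ (equivalently, does not blow up as $\varepsilon \to 0$) fast enough to defeat the exponential term $e^{\frac{1}{2} \beta L^{+}}$. As noted in the remark after Corollary~\ref{coro2-s2.3b}, the $\mathcal{O}(1)$ errors arising in the conversion on $\Sigma^{+}$ are of the $\varepsilon$-independent kind, so $K_{1}$ may be taken independent of $\varepsilon$ and the exponential term dominates. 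This is essentially the whole argument; no genuine obstacle arises.
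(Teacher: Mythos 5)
Your proposal is correct and is essentially the paper's argument, spelled out in full: the paper's proof is the single sentence ``this follows directly from the definition of $\Sigma^{+}$,'' and what you have written is exactly the bookkeeping behind that sentence (the lower bound $Z \geqslant \tfrac{1}{10}(-\rho A)(\wh{K}_{0}+1)e^{\frac{1}{2}\beta L^{+}}$ from~\pref{f2-s2.3}, fed into the $\mathbb{Y}$-formula of Proposition~\ref{prop-s2.3c}, with the $\mathcal{O}(1)$ additive error bounded independently of $\varepsilon$). Your attention to the sign of $-\rho A$ and to the $\varepsilon$-independence of the additive constant is exactly the right care to take, and no further comment is needed.
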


\begin{proof}
This follows directly from the definition
of $\Sigma^+$.
\end{proof}

\section{Explicit computation of $\mscr{M}$ and $\mscr{N}$}
\label{s3}

In this section we explicitly compute the flow-induced maps
$\mscr{M}: \Sigma^- \to \Sigma^+$ and $\mscr{N}: \Sigma^+
\to \Sigma^-$. The map $\mscr{M}: \Sigma^- \to \Sigma^+$ is
computed in Section ~\ref{s3.0}. In Section ~\ref{s2.2} we study the
time-t map of equation ~(\ref{f1-s2.1d}). The map $\mscr{N}:
\Sigma^+ \to \Sigma^-$ is computed in Section ~\ref{s3.3}.

\subsection{Computing {\mathversion{bold}$\mscr{M}: \Sigma^- \to
\Sigma^+$}}\label{s3.0} \ Recall that $s = -L^-$ on $\Sigma^-$.
Let $q_0 = (-L^-, Z_0, \theta_0) \in \Sigma^-$ and let $(s(t),
Z(t), \theta(t))$ be the solution of
system~\pref{e:gnfa}--\pref{e:gnfc} initiated at the point $(-L^-, Z_0,
\theta_0)$. Let $\tilde t$ be the time such that $s(\tilde t) =
L^+$. By definition, $\mscr{M}(q_0) = (L^+, Z(\tilde t),
\theta(\tilde t))$. In this subsection we derive a specific form
of $\mscr{M}$ using $({\mathbb X}, \theta)$-coordinates to
uniquely locate points on $\Sigma^-$ and $(Z, \theta)$-coordinates
to uniquely locate points on $\Sigma^+$. Define
\begin{equation*}
K_1(\varepsilon) = - \rho A_L e^{\int_{0}^{L^+} E(s) \, ds}
\end{equation*}
where
\begin{equation*}
A_L = \int_{-L^-}^{L^+}(u(s) + v(s)) h(a(s), b(s)) e^{-
\int_{0}^{s} E(\tau) \, d\tau} \, ds
\end{equation*}
is obtained by changing the integral bounds of the improper
integral $A$ in ~(\ref{f2a-s1.1}) to $-L^-$ and $L^+$. Also define
\begin{equation*}
P_L  = e^{\int_{-L^-}^{L^+} E(s) \, ds}.
\end{equation*}

\begin{lemma}\label{lemma-added}
\begin{equation*}
P_L \sim \varepsilon^{\frac{\alpha}{\beta} - \frac{\beta}{\alpha}}
\ll 1, \quad K_1(\varepsilon) \sim
\varepsilon^{-\frac{\beta}{\alpha}}
\gg 1.
\end{equation*}
\end{lemma}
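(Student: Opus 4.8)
The plan is to compute the two claimed asymptotics directly from the definitions of $P_L$ and $K_1(\varepsilon)$ together with the estimates on the integrals of $E(s)$ from Lemma~\ref{lem2-s2.2b} and the asymptotics of $a,b,u,v$ from Lemma~\ref{lem1-s2.2b}. First I would handle $P_L$. By definition $P_L = e^{\int_{-L^-}^{L^+} E(s)\, ds}$, and splitting the integral at $0$ gives
\[
\int_{-L^-}^{L^+} E(s)\, ds = \int_{-L^-}^{0} E(s)\, ds + \int_{0}^{L^+} E(s)\, ds.
\]
By Lemma~\ref{lem2-s2.2b}\pref{li:Eintc} the first term is $\approx -\alpha L^-$ and by \pref{li:Eintd} the second is $\approx \beta L^+$, so $\int_{-L^-}^{L^+} E(s)\, ds = -\alpha L^- + \beta L^+ + O(1)$, where the $O(1)$ comes from the convergent correction integrals in \pref{li:Einta} and \pref{li:Eintb}. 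Hence $P_L \sim e^{-\alpha L^- + \beta L^+}$.

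Next I would convert $L^+$ and $L^-$ into powers of $\varepsilon$. Recall $L^+$ and $-L^-$ are defined by the condition that $\ell$ reaches the $\varepsilon$-scale boundary of $U_\varepsilon$; in the notation of Section~\ref{s2.3}\ref{sss:poincare}, $\eta(-L^-) = b(-L^-) + O(\varepsilon^2) = \varepsilon$ and $\xi(L^+) = a(L^+) + O(\varepsilon^2) = \varepsilon$. By Lemma~\ref{lem1-s2.2b}, $a(L^+) \sim e^{-\alpha L^+}$ and $b(-L^-) \sim e^{-\beta L^-}$, so $e^{-\alpha L^+} \sim \varepsilon$ and $e^{-\beta L^-} \sim \varepsilon$; equivalently $L^+ = \tfrac{1}{\alpha} \ln \varepsilon^{-1} + O(1)$ and $L^- = \tfrac{1}{\beta} \ln \varepsilon^{-1} + O(1)$. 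Substituting, $-\alpha L^- + \beta L^+ = -\tfrac{\alpha}{\beta}\ln\varepsilon^{-1} + \tfrac{\beta}{\alpha}\ln\varepsilon^{-1} + O(1)$, so $P_L \sim \varepsilon^{\frac{\alpha}{\beta} - \frac{\beta}{\alpha}}$. Since $0 < \beta < \alpha$ we have $\tfrac{\alpha}{\beta} > 1 > \tfrac{\beta}{\alpha}$, hence the exponent is positive and $P_L \ll 1$, as claimed.

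For $K_1(\varepsilon) = -\rho A_L\, e^{\int_0^{L^+} E(s)\, ds}$, note first that $\rho$ is a fixed constant in $[\rho_1,\rho_2]$ and, by \pref{li:h2a}, $A \neq 0$; moreover $A_L \to A$ as $L^\pm \to \infty$ (this is exactly the content of Lemma~\ref{lem3-s2.2b}, absolute convergence of $A$), so $-\rho A_L \sim 1$. By Lemma~\ref{lem2-s2.2b}\pref{li:Eintd} together with $L^+ = \tfrac{1}{\alpha}\ln\varepsilon^{-1} + O(1)$, we get $\int_0^{L^+} E(s)\, ds \approx \beta L^+ = \tfrac{\beta}{\alpha}\ln\varepsilon^{-1} + O(1)$, so $e^{\int_0^{L^+} E(s)\, ds} \sim \varepsilon^{-\beta/\alpha}$. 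Therefore $K_1(\varepsilon) \sim \varepsilon^{-\beta/\alpha}$, and since $\beta/\alpha > 0$ this is $\gg 1$.

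The main obstacle is essentially bookkeeping: one must be careful that all the error terms genuinely collapse to $O(1)$ in the exponent (so that they become a bounded multiplicative factor, consistent with the $\sim$ notation) rather than growing with $\varepsilon$ or $L^\pm$. The key points are that the correction integrals $\int(E+\alpha)$ and $\int(E-\beta)$ are convergent (Lemma~\ref{lem2-s2.2b}\pref{li:Einta}--\pref{li:Eintb}), that $A_L$ has a nonzero limit (so $-\rho A_L$ is bounded above and below), and that the implicit relations defining $L^\pm$ only pin down $L^\pm$ up to an additive constant independent of $\varepsilon$ — which is exactly the precision the $\sim$ relations require. No single step is deep; the content is assembling the two earlier lemmas correctly.
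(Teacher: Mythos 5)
Your proof is correct and follows essentially the same route as the paper's (very terse) proof: both rest on Lemma~\ref{lem2-s2.2b} together with the relation $\varepsilon \sim e^{-\alpha L^+} \sim e^{-\beta L^-}$, which you derive from the definitions of $L^{\pm}$ and Lemma~\ref{lem1-s2.2b}. You have simply written out the bookkeeping (the $O(1)$ corrections from parts (a)--(b) of Lemma~\ref{lem2-s2.2b} and the boundedness of $-\rho A_L$ away from $0$ and $\infty$) that the paper leaves implicit.
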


\begin{proof} Both estimates follow directly from Lemma
\ref{lem2-s2.2b} and the fact that
\begin{equation*}
\varepsilon \sim e^{-\alpha L^+} \sim e^{- \beta L^-}.
\end{equation*}
\end{proof}

\begin{proposition}\label{prop-s3.0}
Let $({\mathbb X}_0, \theta_0) \in \Sigma^-$ and write $(\hat Z,
\hat \theta) = \mscr{M}({\mathbb X}_0, \theta_0)$. We have
\begin{equation}\label{f6-s3.1c}
\begin{aligned}
\hat \theta &= \theta_0 + \omega (L^+ + L^-)  + {\mathcal O}_{{\mathbb X}_0, \theta_0, p}(\mu) \\
\hat Z &=  K_1(\varepsilon)(1+c_1 \sin \theta_0 + c_2 \cos
\theta_0) + P_L ({\mathbb X}_0 + {\mathcal O}_{\theta_0, p}(1) +
{\mathcal O}_{{\mathbb X}_0, \theta_0, p}(\varepsilon) + {\mathcal
O}_{{\mathbb X}_0, \theta_0, p}(\mu))
\end{aligned}
\end{equation}
where $c_1$ and $c_2$ are constants satisfying
\begin{equation*}
\frac{1}{4}< \sqrt{c_1^2 + c_2^2} < \frac{1}{2}.
\end{equation*}
\end{proposition}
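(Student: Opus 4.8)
The plan is to solve the variational system~\pref{e:gnfa}--\pref{e:gnfc} along the orbit from $\Sigma^-$ to $\Sigma^+$ using the integrating-factor $e^{\int_0^t E(s(\tau))\,d\tau}$ that already appears in the definition of $A$, $C$, $S$, and then read off the two components of $\mscr{M}$. First I would estimate the travel time. From~\pref{e:gnfa} we have $\tdf{s}{t} = 1 + \mu \tilde w_1$ with $\tilde w_1 = {\mathcal O}_{s,Z,\theta,p}(1)$, so the time $\tilde t$ needed for $s$ to go from $-L^-$ to $L^+$ is $L^+ + L^- + {\mathcal O}(\mu)$ (more precisely $\tilde t = (L^++L^-)(1 + {\mathcal O}(\mu))$, with the dependence on initial data controlled in $C^3$). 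Integrating $\tdf{\thet}{t} = \om$ over this time immediately gives the $\hat\theta$ formula; the error term ${\mathcal O}_{{\mathbb X}_0,\theta_0,p}(\mu)$ comes from the variation of $\tilde t$ with $({\mathbb X}_0,\theta_0)$, and $C^3$ control follows from Proposition~\ref{prop-s3.0}'s inputs (the bounds on $\tilde w_1$, and Corollary~\ref{coro1-s2.3b} to convert the initial coordinate $Z_0$ to ${\mathbb X}_0$). Note also that along the orbit one may replace the time parameter by $s$ itself, since $s \mapsto t$ is a near-identity diffeomorphism; this is what lets the $E$-integrals, defined as integrals in the curve parameter $s$, enter the computation.

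Next I would handle the $Z$-equation~\pref{e:gnfb}, which is linear in $Z$ up to the $\mu \tilde w_2$ term:
\begin{equation*}
\tdf{Z}{t} = E(s) Z + \mu \tilde w_2(s,Z,\theta;\mu) - (u+v)(\rho{\mathbb H}(s,0)+\sin\theta).
\end{equation*}
Using the integrating factor, and writing everything as an integral in $s$ (absorbing the Jacobian $\tdf{s}{t} = 1 + {\mathcal O}(\mu)$ into an ${\mathcal O}(\mu)$ correction), I get
\begin{equation*}
\hat Z = e^{\int_{-L^-}^{L^+} E\,ds}\, Z_0 - \int_{-L^-}^{L^+}(u(s)+v(s))(\rho h(a(s),b(s)) + \sin(\theta(s)))\, e^{\int_s^{L^+} E(\tau)\,d\tau}\,ds + ({\rm error}),
\end{equation*}
where the error collects the $\mu\tilde w_2$ contribution and the Jacobian correction. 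The first term is $P_L Z_0$, which after converting $Z_0 \to {\mathbb X}_0$ via Corollary~\ref{coro1-s2.3b} becomes $P_L({\mathbb X}_0 + {\mathcal O}_{\theta_0,p}(1) + {\mathcal O}_{{\mathbb X}_0,\theta_0,p}(\varepsilon) + {\mathcal O}_{{\mathbb X}_0,\theta_0,p}(\mu))$. Pulling the common factor $e^{\int_0^{L^+}E\,d\tau}$ out of the second term leaves $e^{-\int_0^s E\,d\tau}$ inside, so the $\rho h$ part of the integral is exactly $-\rho A_L\, e^{\int_0^{L^+}E\,d\tau} = K_1(\varepsilon)$. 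In the $\sin\theta(s)$ part I substitute $\theta(s) = \theta_0 + \omega(s+L^-) + {\mathcal O}(\mu)$, expand $\sin(\theta_0+\omega(s+L^-))= \sin\theta_0\cos(\omega(s+L^-)) + \cos\theta_0\sin(\omega(s+L^-))$, and recognize the resulting $s$-integrals as (truncated, shifted) versions of $C$ and $S$ from~\pref{f2a-s1.1}; this produces the $K_1(\varepsilon)(c_1\sin\theta_0 + c_2\cos\theta_0)$ term with $c_1, c_2$ proportional to $C/( -\rho A_L)$ and $S/(-\rho A_L)$ up to the $\omega L^-$ phase shift and the truncation.

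The bound $\tfrac14 < \sqrt{c_1^2+c_2^2} < \tfrac12$ is where the choice $\rho \in [\rho_1,\rho_2]$ is used: by construction $\rho_1 = -\tfrac{202}{99}\sqrt{C^2+S^2}/A$ and $\rho_2 = -\tfrac{396}{101}\sqrt{C^2+S^2}/A$, so that $\sqrt{C^2+S^2}/(-\rho A)$ lies strictly between $\tfrac14$ and $\tfrac12$ with room to spare; since $A_L \to A$, $C_L \to C$, $S_L \to S$ as $\varepsilon \to 0$ (hence $L^\pm \to \infty$) by Lemma~\ref{lem3-s2.2b} and the convergence of the truncated integrals, for $\varepsilon$ small enough the perturbed quantity $\sqrt{c_1^2+c_2^2}$ stays inside $(\tfrac14,\tfrac12)$. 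The main obstacle — and the part demanding real care rather than bookkeeping — is tracking the $C^3$ norms in the parameter $p = \ln\mu$ throughout: every occurrence of $\mu$ differentiated in $p$ picks up a factor $\mu$ (the "very important conceptual point" emphasized in Section~\ref{s2.3}), so one must verify that the error terms genuinely have the claimed ${\mathcal O}_{\cdots}(\mu)$ or ${\mathcal O}_{\cdots}(\varepsilon)$ type (in particular that $\tilde t$ and $\theta(s)$ depend on $(\mathbb{X}_0,\theta_0,p)$ with $C^3$-controlled derivatives), and that differentiating the integrating factor $e^{\int E\,ds}$ and the bounds $P_L$, $K_1(\varepsilon)$ from Lemma~\ref{lemma-added} does not destroy the estimates. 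The cleanest way to organize this is to set up the $\mscr{M}$-computation as a fixed-point/Gronwall argument for the orbit map in the $C^3$ topology with $p$ as one of the variables, exactly as in the earlier propositions, and then substitute into the explicit integral formulas above.
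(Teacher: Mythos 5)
Your proposal is correct and follows essentially the same route as the paper: pass from $t$ to the curve parameter $s$ via the near-identity time change, solve the linear $Z$-equation by the integrating factor $e^{\int E}$, split the forcing integral into the $\rho A_L$ piece (giving $K_1(\varepsilon)$) and the truncated $C_L$, $S_L$ pieces (giving $c_1, c_2$ after the $\omega L^-$ phase shift), and obtain the bound $\tfrac14<\sqrt{c_1^2+c_2^2}<\tfrac12$ from the definition of $[\rho_1,\rho_2]$ together with the convergence $A_L\to A$, $C_L\to C$, $S_L\to S$, finishing with the $Z_0\to\mathbb{X}_0$ conversion of Corollary~\ref{coro1-s2.3b}. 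The only detail the paper adds that you omit is the verification (via the choice of $K_0(\varepsilon)$ in~\pref{f4-s3.1c}) that orbits launched from $\Sigma^-$ remain in the domain of validity of the standard form until $s=L^+$, but this is bookkeeping rather than a missing idea.
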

\begin{proof}
Using~\pref{e:gnfc},
we have
\begin{equation*}
\theta(t) = \theta_0 + \omega t.
\end{equation*}
Integrating~\pref{e:gnfa} and~\pref{e:gnfb}, for $t \in[-2 L^-, 2
L^+]$ we have
\begin{equation*}
s(t) = -L^- + t + {\mathcal O}_{t, Z_0, \theta_0, p}(\mu).
\end{equation*}
Inverting the last equality, we obtain
\begin{equation*}
t(s) = s + L^- + {\mathcal O}_{s, Z_0, \theta_0, p}(\mu).
\end{equation*}
Substituting $\theta(t)$ and $t(s)$ into~\pref{e:gnfb}, we obtain
\begin{equation}\label{sharp}
\frac{dZ}{ds} = E(s) Z - (u(s) +  v(s))(\rho {\mathbb H}(s, 0) +
\sin(\theta_0 +\omega L^- + \omega s)) + {\mathcal O}_{s, Z_0,
\theta_0, p}(\mu).
\end{equation}
Note that in~\pref{sharp}, $(s, Z_0, \theta_0, p)$ is such that $s
\in [-2L^-, 2L^+]$, $(Z_0, \theta_0) \in \Sigma^-$, and $p = \ln
\mu \in (-\infty, \ln \mu_0]$. Using~(\ref{sharp}), we obtain
\begin{equation}\label{fa1-s3.1c}
Z(s) =P_s \cdot (Z_0 - \Phi_s(\theta_0) + {\mathcal O}_{s, Z_0,
\theta_0, p}(\mu))
\end{equation}
where
\begin{equation}\label{fadd-s3.0}
\begin{gathered}
P_s = e^{\int_{-L^-}^{s} E(\tau) \, d\tau} \\
\Phi_s(\theta) = \int_{-L^-}^{s}(u(\tau) +  v(\tau)) (\rho
{\mathbb H}(\tau, 0)+ \sin(\theta + \omega L^- + \omega \tau))
\cdot e^{-\int_{-L^-}^\tau E(\hat \tau) \, d\hat \tau} \, d\tau.
\end{gathered}
\end{equation}

From ~(\ref{fa1-s3.1c}), it follows that
\begin{equation}\label{f3-s3.1c}
\begin{aligned}
\hat \theta &= \theta_0 + \omega (L^+ + L^-)  + {\mathcal O}_{Z_0, \theta_0, p}(\mu) \\
\hat Z &= P_L(Z_0 - \Phi_{L^+}(\theta_0) + {\mathcal O}_{Z_0,
\theta_0, p}(\mu)).
\end{aligned}
\end{equation}

We want to write the right-hand side of ~(\ref{f3-s3.1c}) in
$({\mathbb X}_0, \theta_0)$-coordinates. Using Corollary ~\ref{coro1-s2.3b}, we
have
\begin{equation}\label{f5-s3.1c}
\begin{aligned}
\hat \theta &= \theta_0 + \omega (L^+ + L^-)  + {\mathcal O}_{{\mathbb X}_0, \theta_0, p}(\mu) \\
\hat Z &=  P_L \left({\mathbb X}_0  - \Phi_{L^+}(\theta_0) +
{\mathcal O}_{\theta_0, p}(1) + {\mathcal O}_{{\mathbb X}_0,
\theta_0, p}(\varepsilon) + {\mathcal O}_{{\mathbb X}_0, \theta_0,
p}(\mu)\right).
\end{aligned}
\end{equation}
Let $K_2$ be such that
\begin{equation*}
|{\mathbb X}_0 + {\mathcal O}_{\theta_0, p}(1) + {\mathcal
O}_{{\mathbb X}_0, \theta_0, p}(\varepsilon) + {\mathcal
O}_{{\mathbb X}_0, \theta_0, p}(\mu)| < K_2
\end{equation*}
on $\Sigma^-$ and observe that by letting
\begin{equation}\label{f4-s3.1c}
K_0(\varepsilon) = \max_{\substack{\theta \in \mbb{S}^{1}\\ s \in
[-2L^-, 2L^+]}} 2 \left| P_s (K_2 - \Phi_s(\theta)) \right|,
\end{equation}
we conclude from ~(\ref{fa1-s3.1c}) that all solutions of
system~\pref{e:gnfa}--\pref{e:gnfc} initiated inside of $\Sigma^-$
will stay inside of
\begin{equation*}
\{(s, Z, \theta) : s \in [-2L^-, 2L^+], \; |Z| < K_0(\varepsilon)
\}
\end{equation*}
before reaching $s = L^+$. To finish the proof of
Proposition~\ref{prop-s3.0}, it now suffices for us to prove the
following lemma.

\begin{lemma}\label{prop1-s3.2}
For $\rho \in [\rho_1, \rho_2]$, we have
\begin{equation*}
-P_L \Phi_{L^+}(\theta) = K_1(\varepsilon) (1 + c_1 \sin \theta +
c_2 \cos \theta)
\end{equation*}
where $c_1$ and $c_2$ are constants satisfying
\begin{equation*}
\frac{1}{4} < \sqrt{c_1^2 + c_2^2} < \frac{1}{2}.
\end{equation*}
\end{lemma}
\begin{proof}[Proof of Lemma~\ref{prop1-s3.2}]
Recall that in
~(\ref{fadd-s3.0}), ${\mathbb H}(s, 0) = h(a(s), b(s))$. We have
\begin{align*}
P_L \Phi_{L^+}(\theta) &= e^{\int_{0}^{L^+} E(\tau) \, d\tau}
\cdot \int_{-L^-}^{L^+} (u(s) + v(s)) (\rho h(a(s), b(s)) \\
&\hspace{4cm} {}+ \sin
(\theta + \omega L^- + \omega s)) e^{- \int_{0}^{s}
E(\tau) \, d\tau} \, ds \\
&= e^{\int_{0}^{L^+} E(\tau) \, d\tau} \cdot (\rho A_L + (C_L \cos
\omega L^- - S_L \sin \omega L^- ) \sin \theta \\
&\hspace{3.2cm} {}+ (S_L \cos \omega
L^- + C_L \sin \omega L^-) \cos \theta)
\end{align*}
where
\begin{align*}
A_L &= \int_{-L^-}^{L^+} (u(s) + v(s))h(a(s), b(s)) e^{-
\int_{0}^{s}
E(\tau) \, d\tau} \, ds  \\
C_L &= \int_{-L^-}^{L^+} (u(s) + v(s)) \cos (\omega s) e^{- \int_{0}^{s} E(\tau) \, d\tau} \, ds \\
S_L &= \int_{-L^-}^{L^+} (u(s) + v(s)) \sin (\omega s) e^{-
\int_{0}^{s} E(\tau) \, d\tau} \, ds.
\end{align*}
Observe that $A$, $C$, and $S$ in~\bpref{li:h2} are obtained by
letting $L^{\pm} = \infty$ in $A_L$, $C_L$, and $S_L$. We now
write
\begin{equation}\label{f1-prop3.1}
P_L \Phi_{L^+}(\theta) = \rho A_L e^{\int_{0}^{L^+} E(\tau) \,
d\tau} \cdot (1+  c_1 \sin \theta + c_2 \cos \theta)
\end{equation}
where
\begin{align*}
c_1 &= \frac{(C_L \cos \omega L^- - S_L \sin \omega L^-)}{A_L \rho} \\
c_2 &= \frac{(S_L \cos \omega L^- + C_L \sin \omega L^-)}{A_L
\rho}.
\end{align*}
We have
\begin{equation*}
c_1^2 + c_2^2 = \frac{(C_L^2 + S_L^2)}{A_L^2 \rho^2}.
\end{equation*}
Using~\bpref{li:h2}, for $L^{\pm}$ sufficiently large we have
\begin{gather*}
|A_L -A| < \frac{1}{100} |A| \\
\left| \sqrt{C_L^2 + S_L^2} -
\sqrt{C^2 + S^2} \right| < \frac{1}{100} \sqrt{C^2 + S^2}.
\end{gather*}
Therefore, for $\rho \in [\rho_1, \rho_2]$, where
\begin{equation}\label{f2-prop3.1}
\rho_1 = - \frac{202}{99} \frac{\sqrt{C^2 + S^2}}{A}, \quad \rho_2
= - \frac{396}{101} \frac{\sqrt{C^2 + S^2}}{A},
\end{equation}
we have
\begin{equation*}
\frac{1}{4} < \sqrt{c_1^2 + c_2^2} < \frac{1}{2}.
\end{equation*}
Notice that because of the way in which $\rho_1$ and $\rho_2$ are defined,
we have $- \rho A_L >
0$. We also have
\begin{equation*}
K_1(\varepsilon) = - \rho A_L e^{\int_{0}^{L^+} E(\tau) \, d\tau}
\sim e^{-\frac{\beta}{\alpha}}
\end{equation*}
from Lemma \ref{lemma-added}. Equation ~(\ref{f1-prop3.1}) for
$P_L \Phi_{L^+}(\theta)$ is now in the asserted form.
\end{proof}

By using Lemma ~\ref{prop1-s3.2}, we can now rewrite
~(\ref{f5-s3.1c}) as ~(\ref{f6-s3.1c}). This finishes the proof of
Proposition ~\ref{prop-s3.0}.
\end{proof}

\begin{remark}
Observe that in formula ~(\ref{f6-s3.1c})
for $\hat Z$, the term with $K_1(\varepsilon)$ in front
dominates the second term because $K_1(\varepsilon) \gg P_L$. The
inclusion $\mscr{M}(\Sigma^-) \subset \Sigma^+$ follows
directly from ~(\ref{f6-s3.1c}).
\end{remark}

\subsection{On the time-t map of equation
~(\ref{f1-s2.1d})}\label{s2.2} The computation of $\mscr{N}:
\Sigma^+ \to \Sigma^-$ contains two major steps. The first step is
to compute the time-t map of equation ~(\ref{f1-s2.1d}) inside
$\mscr{U}_{\varepsilon}$. This is done in Section ~\ref{s2.2}. The
second step is to compute the time it takes for a solution of
equation ~(\ref{f1-s2.1d}) initiated in $\Sigma^+$ to reach
$\Sigma^-$. This is done in Section ~\ref{s3.3}. These
computations are technically involved because we need to control
the $C^3$ norms of the map $\mscr{N}$ on $\Sigma^+ \times
(-\infty, \ln \mu_0]$, where the interval in the product is the
domain of the parameter $p$.

We start with the first step. Let $W(\Sigma^+)$ be a small open
neighborhood surrounding $\Sigma^+$ in the space $({\mathbb X},
{\mathbb Y}, \theta)$. In this subsection we let $({\mathbb X}_0,
{\mathbb Y}_0, \theta_0) \in W(\Sigma^+)$ and regard $p = \ln \mu
\in (-\infty, \ln \mu_0]$ as the parameter of equation
~(\ref{f1-s2.1d}). We study the time-t map of equation
~(\ref{f1-s2.1d}) assuming that up to time $t$, all solutions
initiated from $W(\Sigma^+)$ are completely contained inside
$\mscr{U}_{\varepsilon}$. Recall that in equation
~(\ref{f1-s2.1d}), ${\mathbb F}({\mathbb X}, {\mathbb Y}, \theta; \mu)$ and
${\mathbb G}({\mathbb X}, {\mathbb Y}, \theta; \mu)$
are analytic on
\begin{equation*}
{\mathbb D} = \{ ({\mathbb X}, {\mathbb Y}, \theta, \mu): \mu
\in [0, \mu_0], \; ({\mathbb X}, {\mathbb Y}, \theta) \in
\mscr{U}_{\varepsilon} \}
\end{equation*}
where
\begin{equation*}
\mscr{U}_{\varepsilon} = \{ ({\mathbb X}, {\mathbb Y},
\theta) : \| ({\mathbb X}, {\mathbb Y}) \| < 2 \varepsilon
\mu^{-1}, \; \theta \in \mbb{S}^{1} \}.
\end{equation*}
For $q_0 = ({\mathbb X}_0, {\mathbb Y}_0, \theta_0) \in
W(\Sigma^+)$, let
\begin{equation*}
q(t, q_0; \mu) = ({\mathbb X}(t, q_0; \mu), {\mathbb Y}(t, q_0;
\mu), \theta(t, q_0; \mu))
\end{equation*}
be the solution of equation ~(\ref{f1-s2.1d}) initiated from $q_0$
at $t = 0$. Using ~(\ref{f1-s2.1d}), we have
\begin{equation}\label{f2-s2.2}
\begin{aligned}
{\mathbb X}(t, q_0; \mu) &= {\mathbb X}_0
e^{\int_0^t (- \alpha + \mu {\mathbb F}(q(s, q_0; \mu); \mu)) \, ds} \\
{\mathbb Y}(t, q_0; \mu) &= {\mathbb Y}_0 e^{\int_0^t (\beta +
\mu {\mathbb G}(q(s, q_0; \mu); \mu)) \, ds } \\
\theta(t, q_0; \mu) &= \theta_0 + \omega t.
\end{aligned}
\end{equation}
We now introduce the functions $U(t, q_0; \mu)$ and $V(t, q_0; \mu)$
and rewrite ~(\ref{f2-s2.2}) as
\begin{equation}\label{f3-s2.2}
\begin{aligned}
{\mathbb X}(t, q_0; \mu) &= {\mathbb X}_0
e^{(- \alpha + U(t, q_0; \mu)) t } \\
{\mathbb Y}(t, q_0; \mu) &= {\mathbb Y}_0 e^{(\beta + V(t, q_0; \mu))t } \\
\theta(t, q_0; \mu) &= \theta_0 + \omega t.
\end{aligned}
\end{equation}
Using~(\ref{f3-s2.2}), we have
\begin{equation}\label{f4-s2.2}
\begin{aligned}
U(t, q_0; \mu) &= t^{-1} \ln \frac{{\mathbb X}(t, q_0;
\mu)}{{\mathbb X}_0} + \alpha \\
V(t, q_0; \mu) &= t^{-1} \ln \frac{{\mathbb Y}(t, q_0;
\mu)}{{\mathbb Y}_0} - \beta. \\
\end{aligned}
\end{equation}
We also have
\begin{equation}\label{f5-s2.2}
\begin{aligned}
U(t, q_0; \mu) &= t^{-1} \int_0^t \mu {\mathbb F}(q(s, q_0; \mu);
\mu) \, ds \\
V(t, q_0; \mu) &= t^{-1} \int_0^t \mu {\mathbb G}(q(s, q_0; \mu);
\mu) \, ds.
\end{aligned}
\end{equation}
In the next proposition we regard $U = U(t, q_0; \mu)$ and $V = V(t,
q_0; \mu)$ as functions of $t$, $q_0$, and $p$ and we write
$U = U_{t, q_0, p}$ and $V = V_{t, q_0, p}$, respectively. We define the domain
of these two functions as follows.  Let
\begin{equation*}
{\mathbb D}_{t, q_0, p} = \{ q_0 \in W(\Sigma^+), \; p \in
(-\infty, \ln \mu_0], \; t \in [1, T(q_0, p)] \}
\end{equation*}
where the upper bound $T(q_0, p)$ on $t$ is designed to keep the
solution inside $\mscr{U}_{\varepsilon}$.
\begin{proposition}\label{prop-2.4}
There exists $K > 0$ such that
\begin{equation*}
\|U_{t, q_0, p}\|_{C^3({\mathbb D}_{t, q_0, p})}< K \mu, \quad
\|V_{t, q_0, p}\|_{C^3({\mathbb D}_{t, q_0, p})} < K \mu.
\end{equation*}
\end{proposition}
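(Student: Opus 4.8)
The plan is to reduce the statement to a single $C^{3}$ bound on the composed function $\Phi(t, q_{0}; \mu) := {\mathbb F}(q(t, q_{0}; \mu); \mu)$ (together with its analogue built from ${\mathbb G}$), and then to observe that the two operations appearing in~\pref{f5-s2.2}, namely multiplication by $\mu$ and the averaging $g \mapsto t^{-1} \int_{0}^{t} g(s, \cdot) \, ds$, are bounded on the relevant $C^{3}$ spaces. Since we work in the parameter $p = \ln \mu$, multiplication by $\mu = e^{p}$ is harmless: every $p$-derivative of $\mu$ is again $\mu$, so $\| \mu \Psi \|_{C^{3}({\mathbb D}_{t, q_{0}, p})} \leqs K \mu$ whenever $\| \Psi \|_{C^{3}({\mathbb D}_{t, q_{0}, p})} \leqs K$. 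Likewise, because $t \geqs 1$ on ${\mathbb D}_{t, q_{0}, p}$, differentiating $t^{-1} \int_{0}^{t} g \, ds$ produces only bounded powers of $t^{-1}$ together with boundary values and $s$-integrals of derivatives of $g$, so this operator has norm $\leqs K$ on $C^{3}({\mathbb D}_{t, q_{0}, p})$. In view of~\pref{f5-s2.2} it therefore suffices to prove $\| \Phi \|_{C^{3}({\mathbb D}_{t, q_{0}, p})} \leqs K$ with $K$ independent of $\varepsilon$.

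For this I would pass back to the unrescaled variables ${\bf X} = \mu {\mathbb X}$, ${\bf Y} = \mu {\mathbb Y}$, in which the flow is governed by~\pref{f3-s2.1c} with coefficients ${\bf F}$, ${\bf G}$ whose $C^{3}$ norms are bounded by a constant independent of $\varepsilon$ on $U_{\varepsilon} \times \mbb{S}^{1} \times [0, \mu_{0}]$. By the definition of $T(q_{0}, p)$ the solution stays in $\mscr{U}_{\varepsilon}$ on ${\mathbb D}_{t, q_{0}, p}$, so $\| ({\bf X}(t), {\bf Y}(t)) \| < 2 \varepsilon$ there; integrating the ${\bf Y}$-equation and using that $|{\bf Y}_{0}|$ is bounded below by a fixed multiple of $\mu$ for $q_{0} \in W(\Sigma^{+})$ (this is where the location of $\Sigma^{+}$ enters, via Proposition~\ref{prop-s2.3c}), the expanding factor obeys $e^{\beta t} \leqs K \varepsilon \mu^{-1}$ on the whole domain. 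Differentiating the integral equations~\pref{f2-s2.2} once, twice, and three times with respect to $(t, {\bf X}_{0}, {\bf Y}_{0}, \theta_{0}, p)$ and applying Gronwall's inequality bounds the corresponding derivatives of $({\bf X}(t, q_{0}; \mu), {\bf Y}(t, q_{0}; \mu), \theta(t, q_{0}; \mu))$, with every growing term carried by at most one factor $e^{\beta t} \leqs K \varepsilon \mu^{-1}$. Converting to the rescaled initial data through $\partial_{{\mathbb X}_{0}} = \mu \, \partial_{{\bf X}_{0}}$ and $\partial_{{\mathbb Y}_{0}} = \mu \, \partial_{{\bf Y}_{0}}$ contributes exactly the compensating power of $\mu$, so the $C^{3}$ norm of $({\bf X}, {\bf Y}, \theta)$ as a function of $(t, q_{0}, p)$ on ${\mathbb D}_{t, q_{0}, p}$ is bounded by $K$. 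Since ${\mathbb F}(q; \mu) = {\bf F}({\bf X}, {\bf Y}, \theta; \mu)$ with ${\bf F}$ of bounded $C^{3}$ norm, the chain rule gives $\| \Phi \|_{C^{3}({\mathbb D}_{t, q_{0}, p})} \leqs K$, and identically for the ${\mathbb G}$-analogue; combining this with the reduction of the first paragraph proves the proposition.

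I expect the real work to lie in the bookkeeping inside the Gronwall step: one must verify that in each of the first three derivatives, and in every mixed partial (including those in $p$, where $\partial_{p} {\bf Y}_{0} = {\bf Y}_{0}$ feeds back into the variational equations), the exponential growth generated by the expanding ${\bf Y}$-direction enters to the first power only and is always paired with a factor $\mu$ coming from the rescaling ${\bf X} = \mu {\mathbb X}$, ${\bf Y} = \mu {\mathbb Y}$, so that the constraint $\| ({\bf X}(t), {\bf Y}(t)) \| < 2 \varepsilon$ built into $T(q_{0}, p)$ exactly absorbs it and no net blow-up in $\varepsilon$ or $\mu$ survives.
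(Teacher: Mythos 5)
Your argument is correct, but it is organized quite differently from the paper's proof, and the difference is instructive. The paper never attempts to bound the derivatives of the flow map: in the rescaled variables one has, e.g., $\partial_{\mathbb{Y}_0} \mathbb{Y} = t \mathbb{Y} \partial_{\mathbb{Y}_0} V + \mathbb{Y}/\mathbb{Y}_0$, which is of size $\varepsilon \mu^{-1}$ and admits no uniform bound. Instead it substitutes these identities (\ref{f5-s2.3}) back into the differentiated form of (\ref{f5-s2.2}) so as to obtain a closed system of integral inequalities for the derivatives of $U$ and $V$ alone, in which every factor of $\mathbb{X}$ or $\mathbb{Y}$ stands next to a matching derivative of $\mathbb{F}$ or $\mathbb{G}$ and is absorbed by the product bound (\ref{sharp-de}); Gronwall then closes the system at each order. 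You instead bound the flow map itself in mixed coordinates (unrescaled target variables, rescaled initial data), where its $C^{3}$ norm genuinely is $O(1)$, and recover $U$ and $V$ afterwards by composition and averaging; your preliminary reductions (boundedness of multiplication by $e^{p}$ in the parameter $p$, and of the averaging operator since $t \geq 1$ on the domain) are clean and correct. Both proofs ultimately rest on the same two facts: each factor of $\mu$ produced by the rescaling is paired against a factor bounded by $2 \varepsilon \mu^{-1}$ (in your language, each $e^{\beta t} \leq K \varepsilon \mu^{-1}$ comes with a compensating $\mu$), and $\mathbb{Y}_{0} > 1$ on $\Sigma^{+}$ (Corollary~\ref{coro1-s2.3c}), which you use to obtain $e^{\beta t} \leq K \varepsilon \mu^{-1}$ and which the paper uses to control the $\mathbb{Y}/\mathbb{Y}_{0}^{2}$ terms. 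One correction to your final paragraph: the expansion does not enter ``to the first power only'' --- the second variational equation for $\partial^{2}_{\mathbf{Y}_{0} \mathbf{Y}_{0}} \mathbf{Y}$ is forced by $(\partial_{\mathbf{Y}_{0}} \mathbf{Y})^{2} \sim e^{2 \beta t}$ --- but each such factor arrives with its own $\mu$ from $\partial_{\mathbb{Y}_{0}} = \mu \, \partial_{\mathbf{Y}_{0}}$, so the invariant to track is that every power of $e^{\beta t}$ is matched by a power of $\mu$; with that amendment your Gronwall bookkeeping closes at all orders up to three, just as the paper's version does.
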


Proposition \ref{prop-2.4} is proved in Section \ref{sadd-1}.

\begin{remark}
By combining Proposition ~\ref{prop-2.4}
and ~(\ref{f3-s2.2}), we can now write the time-t map from
$W (\Sigma^+)$ to $\mscr{U}_{\varepsilon}$ as
\begin{equation}\label{f10-s2.2}
\begin{aligned}
{\mathbb X}(t, {\mathbb X}_0, {\mathbb Y}_0, \theta_0; \mu) &=
{\mathbb X}_0
e^{(- \alpha + {\mathcal O}_{t, {\mathbb X}_0, {\mathbb Y}_0, \theta_0, p}(\mu)) t } \\
{\mathbb Y}(t, {\mathbb X}_0, {\mathbb Y}_0, \theta_0; \mu) &=
{\mathbb Y}_0 e^{(\beta + {\mathcal O}_{t, {\mathbb X}_0, {\mathbb Y}_0, \theta_0, p}(\mu))t } \\
\theta(t, {\mathbb X}_0, {\mathbb Y}_0, \theta_0; \mu) &=
\theta_0 + \omega t.
\end{aligned}
\end{equation}
\end{remark}

\subsection{Estimates on $T(Z_0, \theta_0, p)$}\label{s3.3}
For $q_0 = (Z_0, \theta_0) \in \Sigma^+$, let $q(t, q_0; \mu)$ be
the solution of equation ~(\ref{f1-s2.1d}) initiated at $q_0$ and
let $T$ be the time this solution reaches $\Sigma^-$. In this
subsection we regard $T$ as a function of $Z_0$, $\theta_0$, and $p$ and
we obtain a well-controlled formula for $T$ that is explicit in
the variables $Z_0$, $\theta_0$, and $p$. Since the images of
$\mscr{M}$ are expressed in $(Z, \thet)$-coordinates through
~(\ref{f6-s3.1c}), we must write the initial conditions for
$\mscr{N}$ in $(Z, \thet)$-coordinates on $\Sigma^+$ to
facilitate the intended composition of $\mscr{N}$ and
$\mscr{M}$.

Estimates on $T(Z_0, \theta_0, p)$ are complicated partly because
as a function of $Z_0$ and $\theta_0$, it is implicitly defined
through equations written in $(\mbb{X}, \mbb{Y},
\thet)$-coordinates on $\Sigma^{\pm}$. The computational process
therefore must involve ~(\ref{f10-s2.2}) and the coordinate
transformations on $\Sigma^{\pm}$ presented in
Sections~\ref{s2.3}B and~\ref{s2.3}C.  Before presenting the
desired quantitative estimates, we explain how to obtain $T(Z_0,
\theta_0, p)$ in a conceptual way. Using ~(\ref{f3-s2.2}), we
obtain
\begin{equation}\label{f1-T}
\begin{aligned}
{\mathbb X}(T, {\mathbb X}_0, {\mathbb Y}_0, \theta_0; \mu) &=
{\mathbb X}_0
e^{(- \alpha + U(T, {\mathbb X}_0, {\mathbb Y}_0, \theta_0; p) T } \\
{\mathbb Y}(T, {\mathbb X}_0, {\mathbb Y}_0, \theta_0; \mu) &=
{\mathbb Y}_0 e^{(\beta + V(T, {\mathbb X}_0, {\mathbb Y}_0, \theta_0; p))T } \\
\theta(T, {\mathbb X}_0, {\mathbb Y}_0, \theta_0; \mu) &=
\theta_0 + \omega T.
\end{aligned}
\end{equation}
In~(\ref{f1-T}), ${\mathbb X}_0$ and ${\mathbb Y}_0$ are {\it not} independent variables.  These quantities satisfy
\begin{equation}\label{f1-s3.3}
\begin{aligned}
{\mathbb X}_0 &= \mu^{-1} \varepsilon + {\mathcal O}_{Z_0, \theta_0, p}(1)\\
{\mathbb Y}_0 &= (1 + {\mathcal O}_{\theta_0, p}(\varepsilon) +
\mu {\mathcal O}_{Z_0, \theta_0, p}(1)) Z_0 - {\mathcal
O}_{\theta_0, p}(1)
\end{aligned}
\end{equation}
by Proposition ~\ref{prop-s2.3c}.  We write
\begin{align*}
{\mathbb X}(T) &= {\mathbb X}(T, {\mathbb X}_0, {\mathbb Y}_0,
\theta_0; \mu) \\
{\mathbb Y}(T) &= {\mathbb Y}(T, {\mathbb
X}_0, {\mathbb Y}_0, \theta_0; \mu) \\
\theta(T) &= \theta_0 +
\omega T.
\end{align*}
By definition, ${\mathbb X}(T)$, ${\mathbb Y}(T)$, and $\theta(T)$ are
also related through Corollary ~\ref{coro2-s2.3b}. For the benefit
of a clear exposition, we write the conclusion of Corollary
~\ref{coro2-s2.3b} as
\begin{equation*}
{\mathbb Y} = \varepsilon \mu^{-1} + {\bf f}({\mathbb X}, \theta;
p)
\end{equation*}
where
\begin{equation*}
{\bf f}({\mathbb X}, \theta; p) = {\mathcal O}_{{\mathbb X},
\theta, p}(1).
\end{equation*}
We have
\begin{equation}\label{f2-T}
{\mathbb Y}(T) = \varepsilon \mu^{-1} + {\bf f}({\mathbb X}(T),
\theta(T); p).
\end{equation}

We use~(\ref{f1-T}) to implicitly define
$T(Z_0, \theta_0; p)$. We have
\begin{equation}\label{f3-T}
{\mathbb Y}(T) = {\mathbb Y}_0 e^{(\beta + V(T, {\mathbb X}_0,
{\mathbb Y}_0, \theta_0; p))T}.
\end{equation}

The right-hand side of ~(\ref{f3-T}) is relatively simple: we only
need to substitute for ${\mathbb X}_0$ and ${\mathbb Y}_0$ using
~(\ref{f1-s3.3}). The left-hand side of ~(\ref{f3-T}) is
conceptually more complicated. We need to
\begin{enumerate}
\item
Write ${\mathbb Y}(T)$ as a function of ${\mathbb X}(T)$,
$\theta(T)$, and $p$ using ~(\ref{f2-T}).
\item
Substitute for ${\mathbb X}(T)$ and $\theta(T)$ using~(\ref{f1-T}), thereby obtaining ${\mathbb
Y}(T)$ in terms of $T$, ${\mathbb X}_0$, ${\mathbb Y}_0$, $\theta_0$,
and $p$.
\item
Use~\pref{f1-s3.3} to write ${\mathbb X}_0$ and
${\mathbb Y}_0$ in terms of $Z_0$ and $\theta_0$.
\end{enumerate}
After all of these substitutions are made, we regard ~(\ref{f3-T}) as the
equation that implicitly defines $T(Z_0, \theta_0; p)$. We use
this equation as the basis for the computation of $T(Z_0,
\theta_0; p)$.

\begin{proposition} \label{prop1-s3.3}
As a function of $Z_0$, $\theta_0$, and $p$, the map $T$ satisfies
\begin{equation*}
\|T - \frac{1}{\beta} \ln \mu^{-1}\|_{C^3} < K.
\end{equation*}
\end{proposition}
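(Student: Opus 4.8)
The plan is to extract $T$ from the implicit relation assembled in this subsection, make the substitution $T = \tfrac{1}{\beta}\ln\mu^{-1} + \tau$, and close a contraction argument for $\tau$ in the space of $C^3$ functions of $(Z_0, \theta_0, p)$ on $\Sigma^+ \times (-\infty, \ln\mu_0]$.

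First I would combine~\pref{f3-T} with~\pref{f2-T}, so that $T$ is characterised as the solution of
\[
\varepsilon \mu^{-1} + {\bf f}\big({\mathbb X}(T), \theta(T); p\big) = {\mathbb Y}_0 \, e^{(\beta + V(T, {\mathbb X}_0, {\mathbb Y}_0, \theta_0; p))\, T},
\]
where ${\mathbb X}_0, {\mathbb Y}_0$ are the explicit functions of $(Z_0, \theta_0, p)$ in~\pref{f1-s3.3}, the function ${\bf f} = {\mathcal O}_{{\mathbb X}, \theta, p}(1)$ comes from Corollary~\ref{coro2-s2.3b}, and $V$ satisfies $\|V\|_{C^3} < K\mu$ by Proposition~\ref{prop-2.4}. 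Taking logarithms, solving for $T$, and using $\mu^{-1} = e^{-p}$ yields
\[
T = \frac{1}{\beta}\ln\mu^{-1} + \frac{1}{\beta}\Lambda - \frac{V}{\beta(\beta+V)}\big(\ln\mu^{-1} + \Lambda\big), \qquad \Lambda := \ln\varepsilon - \ln{\mathbb Y}_0 + \ln\!\big(1 + \mu\varepsilon^{-1}{\bf f}\big).
\]
Substituting $T = \tfrac{1}{\beta}\ln\mu^{-1} + \tau$ into the right-hand side — so that ${\mathbb X}(T) = {\mathbb X}_0 e^{(-\alpha + U(T,\dots))T}$, $\theta(T) = \theta_0 + \omega T$, and $V(T,\dots)$ become functions of $(Z_0, \theta_0, p)$ and $\tau$ through~\pref{f10-s2.2} and Proposition~\ref{prop-2.4} — produces a fixed point equation $\tau = {\mathcal G}(\tau)$. (Unique solvability of this equation corresponds to the unique crossing of $\Sigma^-$, since the ${\mathbb Y}$-equation has strictly positive expansion rate.)

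The heart of the proof is to show that, for $\mu_0$ small, ${\mathcal G}$ maps the ball $\{\|\tau\|_{C^3} \leqs K_{\ast}\}$ into itself and contracts on it, for a suitable $K_{\ast}$. The ingredients are: (i) on $\Sigma^+$ the coordinate $Z_0$ ranges over a fixed interval bounded away from $0$ and $\infty$, so by~\pref{f1-s3.3} ${\mathbb Y}_0 \sim Z_0$ is also bounded away from $0$ and $\infty$, whence $\ln{\mathbb Y}_0$ is $C^3$-bounded in $(Z_0, \theta_0, p)$; (ii) since ${\bf f} = {\mathcal O}(1)$ and $\mu \ll \varepsilon$, the term $\ln(1 + \mu\varepsilon^{-1}{\bf f})$ has small $C^3$ norm, so $\Lambda$ is $C^3$-bounded; (iii) by Proposition~\ref{prop-2.4} the prefactor $V/(\beta(\beta+V))$ is ${\mathcal O}(\mu)$, and although the bracket $\ln\mu^{-1} + \Lambda$ it multiplies contains the unbounded term $\ln\mu^{-1} = -p$, the product (and its first three $p$-derivatives) stays bounded because $-pe^{p}$ has bounded derivatives of all orders on $p \leqs \ln\mu_0 < -1$, with supremum tending to $0$ as $\mu_0 \to 0$; and (iv) the compositions ${\bf f}({\mathbb X}(T), \theta(T); p)$ and $V(T, {\mathbb X}_0, {\mathbb Y}_0, \theta_0; p)$ preserve $C^3$-control because, for $\tau$ in the ball, the inputs $T$, ${\mathbb X}(T) \sim \varepsilon\mu^{\frac{\alpha}{\beta}-1}$, $\theta(T)$, and $({\mathbb X}_0, {\mathbb Y}_0)$ have the required derivative bounds and land in the domains on which~\pref{f10-s2.2} and the estimates of Section~\ref{s2.2} are valid, so the chain rule applies (the ${\mathcal O}(\mu)$ size of $\partial_{\mathbb X} V$ exactly offsetting the ${\mathcal O}(\mu^{-1})$ growth of $\partial_p {\mathbb X}_0$). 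The contraction mapping theorem then gives a unique fixed point $\tau$ with $\|\tau\|_{C^3} < K$, which is the assertion.

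The main obstacle is precisely the bookkeeping in steps (iii) and (iv): one must track $C^3$ norms through several compositions while simultaneously handling the two small scales $\mu \ll \varepsilon$ and the noncompact parameter interval $p \in (-\infty, \ln\mu_0]$, on which $\ln\mu^{-1}$ and several intermediate quantities are unbounded. The structural point is that subtracting $\tfrac{1}{\beta}\ln\mu^{-1}$ removes the divergent part of $T$ exactly, after which every residual product of an ${\mathcal O}(\mu)$ factor with an ${\mathcal O}(\ln\mu^{-1})$ (or ${\mathcal O}(\mu^{-1})$) factor is bounded, together with its $p$-derivatives, because $\mu\ln\mu^{-1} \to 0$ and $\mu\cdot\mu^{-1}=1$. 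Once this $C^3$-control of the compositions is in hand, the self-map and contraction estimates are routine, and the bound $\|T - \tfrac{1}{\beta}\ln\mu^{-1}\|_{C^3} < K$ follows.
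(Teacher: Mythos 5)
Your proposal is essentially the paper's own argument: you derive the same implicit relation obtained by combining~(\ref{f2-T}) and~(\ref{f3-T}), subtract $\frac{1}{\beta}\ln\mu^{-1}$ to get exactly the identity~(\ref{f4-T}) for the remainder $\tau$, and control it using the same ingredients ($\|V\|_{C^3}<K\mu$ from Proposition~\ref{prop-2.4}, $\mbb{Y}_{0}$ bounded away from $0$ on $\Si^{+}$, $\mbf{f}=\mcal{O}(1)$, the boundedness of $\mu\ln\mu^{-1}$ and its $p$-derivatives, and the smallness of $\mu^{-1}e^{-\al T}$ needed to push the chain rule through $\mbb{X}(T)$ and $\thet(T)$). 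The only difference is packaging: you close a contraction in a ball of the $C^{3}$ norm, whereas the paper first gets the $C^{0}$ bound and then differentiates~(\ref{f4-T}), solving $\partial T = (I) + (II)\,\partial T$ with $|(II)|\ll 1$ order by order up to three derivatives. The paper's route is slightly safer on one technical point: a genuine contraction estimate in the $C^{3}$ norm for compositions like $V(T,\dots)$ and $\mbf{f}(\mbb{X}(T),\thet(T);p)$ requires one more derivative of the data than Proposition~\ref{prop-2.4} supplies, so strictly you should contract only in $C^{0}$ (or take existence of $T$ from the flow, as the paper does) and obtain the $C^{1}$, $C^{2}$, $C^{3}$ bounds by differentiating the fixed-point identity; with that adjustment your argument coincides with the paper's.
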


Proposition \ref{prop1-s3.3} is proved in Section \ref{sadd-2}.

\subsection{Computing {\mathversion{bold} $\mscr{N} : \Sigma^+ \to \Sigma^-$}}

We derive a formula for the induced map $\mscr{N}_p: \Sigma^+ \to
\Sigma^-$. For $(Z_0, \theta_0) \in \Sigma^+$, we write $({\mathbb
X}_1, \theta_1) = \mscr{N}_p(Z_0, \theta_0)$. We start with $U$
and $V$ in ~(\ref{f1-T}).
\begin{lemma}\label{lem5-T}
On $\Sigma^+ \times (-\infty, \ln \mu_0]$, we have
\begin{align*}
U(T, {\mathbb X}_0, {\mathbb Y}_0, \theta_0; p) &= \mu {\mathcal
O}_{Z_0, \theta_0, p}(1) \\
V(T, {\mathbb X}_0, {\mathbb Y}_0, \theta_0; p) &= \mu {\mathcal
O}_{Z_0, \theta_0, p}(1).
\end{align*}
\end{lemma}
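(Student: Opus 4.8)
The plan is to read off $U$ and $V$ along the orbit that realizes $\mscr{N}$ by substituting the escape time and the $\Sigma^+$-initial data into the functions $U_{t,q_0,p}$, $V_{t,q_0,p}$ that Proposition~\ref{prop-2.4} already controls. Write $\mathbb{X}_0 = \mathbb{X}_0(Z_0,\theta_0,p)$ and $\mathbb{Y}_0 = \mathbb{Y}_0(Z_0,\theta_0,p)$ as in~\pref{f1-s3.3} (Proposition~\ref{prop-s2.3c}) and $T = T(Z_0,\theta_0,p)$ as in Proposition~\ref{prop1-s3.3}, and consider the substitution
\begin{equation*}
\Xi : (Z_0,\theta_0,p)\ \longmapsto\ \big(T(Z_0,\theta_0,p),\ \mathbb{X}_0(Z_0,\theta_0,p),\ \mathbb{Y}_0(Z_0,\theta_0,p),\ \theta_0,\ p\big).
\end{equation*}
First I would verify that $\Xi$ maps $\Sigma^+\times(-\infty,\ln\mu_0]$ into the domain $\mathbb{D}_{t,q_0,p}$ on which Proposition~\ref{prop-2.4} holds: the point $(\mathbb{X}_0,\mathbb{Y}_0,\theta_0)$ lies on $\Sigma^+\subset W(\Sigma^+)$ by construction, and $T\approx\frac1\beta\ln\mu^{-1}$ lies in $[1,T(q_0,p)]$ once $\mu_0$ is small, since by construction the orbit stays in $\mscr{U}_\varepsilon$ until it meets $\Sigma^-$. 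Then $U(T,\mathbb{X}_0,\mathbb{Y}_0,\theta_0;p)$ is precisely $U_{t,q_0,p}\circ\Xi$, and the task reduces to running the $C^3$ chain rule, knowing $\|U_{t,q_0,p}\|_{C^3}<K\mu$ and $\|V_{t,q_0,p}\|_{C^3}<K\mu$.

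By the chain rule, every derivative up to order three of $U_{t,q_0,p}\circ\Xi$ in $(Z_0,\theta_0,p)$ is a sum of terms of the form (a partial of $U_{t,q_0,p}$ in its own arguments) times (a product of at most three partials of the components of $\Xi$). The components $T$ and $\theta_0$ contribute only $\mathcal{O}(1)$ factors (Proposition~\ref{prop1-s3.3}), and the derivatives of $\mathbb{Y}_0$ are bounded by Proposition~\ref{prop-s2.3c}. The sole difficulty is the component $\mathbb{X}_0 = \mu^{-1}\varepsilon + \mathcal{O}_{Z_0,\theta_0,p}(1)$: since $p=\ln\mu$, each $\partial_p$ applied to $\mu^{-1}\varepsilon$ reproduces $-\mu^{-1}\varepsilon$, so $\partial_p^{\,k}\mathbb{X}_0$ is as large as $\mu^{-1}\varepsilon$. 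Paired naively with $\partial_{\mathbb{X}_0}U_{t,q_0,p}<K\mu$, the term $\partial_{\mathbb{X}_0}U\cdot\partial_p\mathbb{X}_0$ would only be $\mathcal{O}(\varepsilon)$; this is the main obstacle, and it concerns exactly the $p$-derivatives of the composite (all $Z_0$- and $\theta_0$-derivatives of $\mathbb{X}_0$ are already $\mathcal{O}(1)$, so those chain-rule terms are harmless).

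The resolution is that $U_{t,q_0,p}$ and $V_{t,q_0,p}$ are extra flat in the spatial directions. In~\pref{f5-s2.2} the integrands $\mathbb{F}(\mathbb{X},\mathbb{Y},\theta;\mu) = \mathbf{F}(\mu\mathbb{X},\mu\mathbb{Y},\theta;\mu)$ and $\mathbb{G}(\mathbb{X},\mathbb{Y},\theta;\mu) = \mathbf{G}(\mu\mathbb{X},\mu\mathbb{Y},\theta;\mu)$ depend on the spatial variables only through $\mu\mathbb{X}$ and $\mu\mathbb{Y}$, hence so do $\mathbb{X}(t,q_0;\mu)$ and $\mathbb{Y}(t,q_0;\mu)$, and therefore $U_{t,q_0,p}$ and $V_{t,q_0,p}$ factor through $(t,\mu\mathbb{X}_0,\mu\mathbb{Y}_0,\theta_0;\mu)$. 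Consequently every mixed derivative $\partial_{\mathbb{X}_0}^{\,j}\partial_{\mathbb{Y}_0}^{\,k}$ of $U_{t,q_0,p}$ carries a factor $\mu^{j+k}$ on top of the $K\mu$ of Proposition~\ref{prop-2.4}; one sees this either by re-inspecting the proof of Proposition~\ref{prop-2.4} in Section~\ref{sadd-1}, or directly by differentiating~\pref{f5-s2.2} under the integral and using $\partial_{\mathbb{X}}\mathbb{F} = \mu\,\partial_{\mathbf{X}}\mathbf{F}$, $\partial_{\mathbb{Y}}\mathbb{F} = \mu\,\partial_{\mathbf{Y}}\mathbf{F}$ together with the a priori control on $\partial_{q_0}q(s,q_0;\mu)$. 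With this, the dangerous contributions collapse: $\partial_{\mathbb{X}_0}U\cdot\partial_p\mathbb{X}_0 = \mathcal{O}(\mu^2)\cdot\mathcal{O}(\mu^{-1}\varepsilon) = \mathcal{O}(\mu)$, and more generally a chain-rule term carrying $m\leq 3$ copies of $\partial_p\mathbb{X}_0$ is paired with a spatial derivative of $U$ of order at least $m$, giving $\mathcal{O}(\mu^{m+1})\cdot\mathcal{O}(\mu^{-m}\varepsilon^m) = \mathcal{O}(\mu)$; the terms without any $\partial_p\mathbb{X}_0$ are bounded by $K\mu$ directly from Proposition~\ref{prop-2.4}. (The residual $\varepsilon$-dependence of the various constants is absorbed into a power of $\mu$ by shrinking $\mu_0$, as throughout.) Summing over the finitely many chain-rule terms gives $\|U(T,\mathbb{X}_0,\mathbb{Y}_0,\theta_0;p)\|_{C^3(\Sigma^+\times(-\infty,\ln\mu_0])}<K\mu$, i.e.\ $U(T,\mathbb{X}_0,\mathbb{Y}_0,\theta_0;p) = \mu\,\mathcal{O}_{Z_0,\theta_0,p}(1)$; the estimate for $V$ is identical with $(\mathbb{F},\mathbf{F})$ replaced by $(\mathbb{G},\mathbf{G})$.
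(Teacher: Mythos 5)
Your argument is correct and follows the same route as the paper's own (two-sentence) proof: substitute $T(Z_0,\theta_0,p)$ from Proposition~\ref{prop1-s3.3} and the expressions~(\ref{f1-s3.3}) for $\mathbb{X}_0$, $\mathbb{Y}_0$ into $U$ and $V$, then run the $C^3$ chain rule against Proposition~\ref{prop-2.4}. What you add, and what the paper leaves implicit, is the one place where this bookkeeping is not automatic: because $\mathbb{X}_0=\varepsilon\mu^{-1}+\mathcal{O}_{Z_0,\theta_0,p}(1)$ and $p=\ln\mu$, the factor $\partial_p\mathbb{X}_0$ has size $\varepsilon\mu^{-1}$, so the bound $|\partial_{\mathbb{X}_0}U|<K\mu$ recorded in Proposition~\ref{prop-2.4} would by itself yield only $\mathcal{O}(\varepsilon)$ for the product $\partial_{\mathbb{X}_0}U\cdot\partial_p\mathbb{X}_0$. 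Your resolution is the right one: since $\mathbb{F}(\mathbb{X},\mathbb{Y},\theta;\mu)=\mathbf{F}(\mu\mathbb{X},\mu\mathbb{Y},\theta;\mu)$, the quantities $U$ and $V$ depend on $(\mathbb{X}_0,\mathbb{Y}_0)$ only through $(\mu\mathbb{X}_0,\mu\mathbb{Y}_0)$, so every spatial derivative gains an extra factor of $\mu$; equivalently, in the $\mathbb{X}_0$-analogue of~(\ref{f6-s2.3}) the inhomogeneous term carries $\mathbb{X}/\mathbb{X}_0$ with $\mathbb{X}_0\approx\varepsilon\mu^{-1}$ on $\Sigma^+$ (rather than merely $\mathbb{Y}_0>1$ as in Corollary~\ref{coro1-s2.3c}), giving $|\partial_{\mathbb{X}_0}U|\leq K\mu^2/\varepsilon$. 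Either way the offending chain-rule terms are $\mathcal{O}(\mu)$, the remaining factors ($\partial T$, derivatives of $\mathbb{Y}_0$, the $Z_0$- and $\theta_0$-derivatives of $\mathbb{X}_0$) are handled exactly as you describe, and the lemma follows.
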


\begin{proof}
We write $U$ and $V$ as functions of $(Z_0, \theta_0, p)$ using
Proposition ~\ref{prop1-s3.3} for $T(Z_0, \theta_0; p)$ and
~(\ref{f1-s3.3}) for ${\mathbb X}_0$ and ${\mathbb Y}_0$. This
lemma is established by applying the chain rule and using
Proposition ~\ref{prop-2.4}, Proposition ~\ref{prop1-s3.3}, and
~(\ref{f1-s3.3}).
\end{proof}

\begin{proposition}\label{prop2-s3.3}
The flow-induced map $\mscr{N}_p: \Sigma^+ \to \Sigma^-$ is given
by
\begin{equation}\label{f8-s2.2}
\begin{aligned}
{\mathbb X}_1 &=  \left(\frac{\mu}{\varepsilon + \mu{\mathcal
O}_{Z_0, \theta_0, p}(1)}\right)^{\frac{\tilde \alpha}{\tilde
\beta}-1}\left([1 + {\mathcal O}_{\theta_0, p}(\varepsilon) + \mu
{\mathcal O}_{Z_0, \theta_0, p}(1)] Z_0 - {\mathcal O}_{\theta_0,
p}(1) \right)^{\frac{\tilde \alpha}{\tilde \beta}}\\
\theta_1 &= \theta_0 + \frac{\omega} {\beta + \mu {\mathcal
O}_{Z_0, \theta_0, p}(1)} \ln \frac{(\varepsilon + \mu {\mathcal
O}_{Z_0, \theta_0, p}(1))\mu^{-1}}{[1 + {\mathcal O}_{\theta_0,
p}(\varepsilon) + \mu {\mathcal O}_{Z_0, \theta_0, p}(1)] Z_0 -
{\mathcal O}_{\theta_0, p}(1)}
\end{aligned}
\end{equation}
where
\begin{equation*}
\tilde \alpha = \alpha + \mu {\mathcal O}_{Z_0, \theta_0, p}(1),
\quad \tilde \beta = \beta + \mu{\mathcal O}_{Z_0, \theta_0,
p}(1).
\end{equation*}
\end{proposition}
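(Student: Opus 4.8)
The plan is to start from the solution formula~(\ref{f1-T}) for equation~(\ref{f1-s2.1d}) inside $\mscr{U}_{\varepsilon}$, to exploit the implicit relation~(\ref{f3-T}) that determines the transit time $T = T(Z_0, \theta_0; p)$, and then to substitute the coordinate conversions of Proposition~\ref{prop-s2.3c}. First I would note that, by construction, if $T$ is the time at which the orbit of~(\ref{f1-s2.1d}) issuing from $(Z_0, \theta_0) \in \Sigma^+$ first meets $\Sigma^-$, then the landing point is $({\mathbb X}(T), {\mathbb Y}(T), \theta(T))$, and since ${\mathbb Y}$ is determined by $({\mathbb X}, \theta)$ on $\Sigma^-$ (Corollary~\ref{coro2-s2.3b}) this point is identified with $({\mathbb X}(T), \theta(T))$; thus I set ${\mathbb X}_1 = {\mathbb X}(T)$ and $\theta_1 = \theta(T) = \theta_0 + \omega T$.

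Next I would eliminate $T$ from the exponentials. Because the landing point lies on $\Sigma^-$, Corollary~\ref{coro2-s2.3b} in the form~(\ref{f2-T}) gives ${\mathbb Y}(T) = \varepsilon \mu^{-1} + {\bf f}({\mathbb X}(T), \theta(T); p)$ with ${\bf f} = {\mathcal O}_{{\mathbb X}, \theta, p}(1)$, while~(\ref{f3-T}) gives ${\mathbb Y}(T) = {\mathbb Y}_0 e^{(\beta + V)T}$. Equating these and writing $\tilde\alpha = \alpha - U$, $\tilde\beta = \beta + V$ yields
\begin{equation*}
e^{(\beta + V)T} = \frac{\varepsilon \mu^{-1} + {\bf f}}{{\mathbb Y}_0}, \qquad
T = \frac{1}{\tilde\beta} \ln \frac{\varepsilon \mu^{-1} + {\bf f}}{{\mathbb Y}_0},
\end{equation*}
and then, using ${\mathbb X}(T) = {\mathbb X}_0 e^{(-\alpha + U)T} = {\mathbb X}_0 \big( e^{(\beta + V)T} \big)^{-\tilde\alpha/\tilde\beta}$,
\begin{equation*}
{\mathbb X}_1 = {\mathbb X}_0 \, {\mathbb Y}_0^{\tilde\alpha/\tilde\beta} \, (\varepsilon \mu^{-1} + {\bf f})^{-\tilde\alpha/\tilde\beta}.
\end{equation*}
The circular appearance of $T$ inside ${\bf f}({\mathbb X}(T), \theta(T); p)$ is harmless: Proposition~\ref{prop1-s3.3} already gives $T$ as a $C^3$ function of $(Z_0, \theta_0, p)$ with $\|T - \frac{1}{\beta}\ln\mu^{-1}\|_{C^3} < K$, so ${\mathbb X}(T)$ and $\theta(T)$ are $C^3$ functions of $(Z_0, \theta_0, p)$ (with ${\mathbb X}(T) \to 0$, since $\alpha/\beta > 1$), and the chain rule gives ${\bf f}({\mathbb X}(T), \theta(T); p) = {\mathcal O}_{Z_0, \theta_0, p}(1)$; hence $\varepsilon \mu^{-1} + {\bf f} = \mu^{-1}(\varepsilon + \mu {\mathcal O}_{Z_0, \theta_0, p}(1))$.

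Then I would substitute. By Proposition~\ref{prop-s2.3c}, i.e. equation~(\ref{f1-s3.3}), we have ${\mathbb X}_0 = \mu^{-1}(\varepsilon + \mu {\mathcal O}_{Z_0, \theta_0, p}(1))$ and ${\mathbb Y}_0 = [1 + {\mathcal O}_{\theta_0, p}(\varepsilon) + \mu {\mathcal O}_{Z_0, \theta_0, p}(1)] Z_0 - {\mathcal O}_{\theta_0, p}(1)$, and by Corollary~\ref{coro1-s2.3c} we have ${\mathbb Y}_0 > 1$, so the non-integer powers are legitimate. Combining the $\mu^{-1}$ from ${\mathbb X}_0$ with the $\mu^{\tilde\alpha/\tilde\beta}$ coming from $(\varepsilon\mu^{-1} + {\bf f})^{-\tilde\alpha/\tilde\beta}$ produces exactly the prefactor $\big(\mu/(\varepsilon + \mu {\mathcal O}_{Z_0, \theta_0, p}(1))\big)^{\tilde\alpha/\tilde\beta - 1}$ in the formula for ${\mathbb X}_1$, and substituting the same two expressions into $\theta_1 = \theta_0 + \omega T$ gives the asserted logarithmic term. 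Finally, Lemma~\ref{lem5-T} gives $U = \mu {\mathcal O}_{Z_0, \theta_0, p}(1)$ and $V = \mu {\mathcal O}_{Z_0, \theta_0, p}(1)$ on $\Sigma^+ \times (-\infty, \ln\mu_0]$, which is precisely $\tilde\alpha = \alpha + \mu {\mathcal O}_{Z_0, \theta_0, p}(1)$ and $\tilde\beta = \beta + \mu {\mathcal O}_{Z_0, \theta_0, p}(1)$.

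The main obstacle will not be conceptual but the bookkeeping of $C^3$ norms through these compositions: one must confirm that each substitution preserves the ${\mathcal O}_{Z_0, \theta_0, p}$-type bounds (with the correct $\varepsilon$-dependence of the constants), and in particular that the non-integer exponent $\tilde\alpha/\tilde\beta$ — itself a $C^3$ function of $(Z_0, \theta_0, p)$ — is applied only to bases that stay bounded away from $0$ (namely $\varepsilon + \mu {\mathcal O}(1) \approx \varepsilon$ and ${\mathbb Y}_0 > 1$), so that raising to this power remains $C^3$-controlled. The substantive estimates behind all of this are Propositions~\ref{prop-2.4} and~\ref{prop1-s3.3} and Lemma~\ref{lem5-T}; what is left is the routine algebra of collecting powers of $\mu$.
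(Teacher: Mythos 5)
Your proposal is correct and follows essentially the same route as the paper: it eliminates $T$ via the relation ${\mathbb Y}(T) = \varepsilon\mu^{-1} + {\bf f}({\mathbb X}(T),\theta(T);p) = {\mathbb Y}_0 e^{(\beta+V)T}$ (the paper's~(\ref{f6-T})--(\ref{f7-T})), converts ${\bf f}$ to ${\mathcal O}_{Z_0,\theta_0,p}(1)$ via Proposition~\ref{prop1-s3.3}, substitutes~(\ref{f1-s3.3}) for ${\mathbb X}_0$ and ${\mathbb Y}_0$, and invokes Lemma~\ref{lem5-T} for $\tilde\alpha$ and $\tilde\beta$. The only difference is cosmetic: you collect the powers of $\mu$ by writing ${\mathbb X}(T) = {\mathbb X}_0\big(e^{(\beta+V)T}\big)^{-\tilde\alpha/\tilde\beta}$ up front, whereas the paper substitutes the expression~(\ref{f7-T}) for $T$ into ${\mathbb X}_0 e^{-\tilde\alpha T}$ afterwards.
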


\begin{proof}
Using ~(\ref{f2-T}), ~(\ref{f3-T}) and Lemma ~\ref{lem5-T}, we
have
\begin{equation}\label{f6-T}
\begin{aligned}
T &= \frac{1} {\beta + \mu {\mathcal O}_{Z_0, \theta_0, p}(1)}
\ln \frac{Y(T)}{Y_0} \\
&= \frac{1} {\beta + \mu {\mathcal O}_{Z_0, \theta_0, p}(1)} \ln
\frac{(\varepsilon + \mu {\bf f}({\mathbb X}(T), \theta(T);
p))\mu^{-1} }{{\mathbb Y}_0}.
\end{aligned}
\end{equation}
By using Proposition ~\ref{prop1-s3.3} and the fact that ${\bf
f}({\mathbb X}, \theta; p) = {\mathcal O}_{{\mathbb X}, \theta,
p}(1)$, we have
\begin{equation*}
{\bf f}({\mathbb X}(T), \theta(T); p) = {\mathcal O}_{Z_0,
\theta_0, p}(1).
\end{equation*}
Now ~(\ref{f6-T}) gives
\begin{equation}\label{f7-T}
T = \frac{1} {\beta + \mu {\mathcal O}_{Z_0, \theta_0, p}(1)} \ln
\frac{\mu^{-1} (\varepsilon + \mu {\mathcal O}_{Z_0, \theta_0,
p}(1)) }{[1 + {\mathcal O}_{\theta_0, p}(\varepsilon) + \mu
{\mathcal O}_{Z_0, \theta_0, p}(1)] Z_0 - {\mathcal O}_{\theta_0,
p}(1)}.
\end{equation}
Here we use ~(\ref{f1-s3.3}) for ${\mathbb Y}_0$.

The desired formula for $\theta_1$ now follows from $\theta_1 =
\theta_0 + \omega T$. For ${\mathbb X}_1$ we use
\begin{equation*}
{\mathbb X}_1 = \mu^{-1} (\varepsilon + \mu {\mathcal O}_{Z_0,
\theta_0, p}(1)) e^{-(\alpha + \mu {\mathcal O}_{Z_0, \theta_0,
p}(1))T}
\end{equation*}
and substitute for $T$ using ~(\ref{f7-T}).
\end{proof}

\section{Proof of Theorem ~\ref{th-s1.1}} \label{s4}

In Subsection~\ref{s4.1} we compute $\mscr{F}_{p} = \mscr{N} \circ
\mscr{M}$ by using Propositions~\ref{prop2-s3.3}
and~\ref{prop-s3.0}. In Subsection~\ref{ss:2pf} we apply the
theory of rank one maps to the family $\{ \mscr{F}_{p} \}$,
thereby proving the existence of rank one chaos as claimed in
Theorem ~\ref{th-s1.1}.

\subsection{The flow-induced map {\mathversion{bold} $\mscr{F} = \mscr{N} \circ \mscr{M}$}}
\label{s4.1}

We regard $p$ as the fundamental parameter of the flow-induced map $\mscr{F} : \Si^{-} \to
\Si^{-}$.  For $(\mbb{X}_{0}, \thet_{0}) \in \Si^{-}$, let $(\mbb{X}_{1}, \thet_{1}) =
(\mscr{N} \circ \mscr{M}) (\mbb{X}_{0}, \thet_{0})$.  We compute $\mscr{F}_{p} :
(\mbb{X}_{0}, \thet_{0}) \mapsto (\mbb{X}_{1}, \thet_{1})$ by combining~\pref{f8-s2.2}
and~\pref{f6-s3.1c}.

\begin{proposition}\label{prop-s4.1}
The map $\mscr{F}_p: \Sigma^- \to \Sigma^-$ is given by
\begin{align}
\sublabon{equation}
{\mathbb X}_1 &=  (\mu (\varepsilon + {\mathcal O}_{{\mathbb
X}_0,\theta_0, p}(\mu) )^{-1})^{\tfrac{\tilde \alpha}{\tilde
\beta}-1}
\label{e:fx} \\
&\qquad \times \left((1 + {\mathcal O}^{\partial \mbb{X}_{0} \langle \mu \rangle}_{{\mathbb X}_0,
\theta_0,
p}(\varepsilon) + {\mathcal O}_{{\mathbb X}_0, \theta_0, p}(\mu))
\mscr{Z} - {\mathcal O}^{\partial \mbb{X}_{0} \langle \mu \rangle}_{{\mathbb X}_0, \theta_0, p}(1)
\right)^{\tfrac{\tilde \alpha}{\tilde \beta}}
\notag \\
\theta_1 &= \theta_0 + \omega(L^+ + L^-)  + {\mathcal O}_{{\mathbb
X}_0, \theta_0, p}(\mu)
\label{e:fthet} \\
&\qquad {}+ \frac{\omega}{\beta + {\mathcal O}_{{\mathbb X}_0,
\theta_0, p}(\mu)} \ln \frac{(\varepsilon + {\mathcal O}_{{\mathbb
X}_0, \theta_0, p}(\mu))\mu^{-1}}{(1 + {\mathcal O}^{\partial \mbb{X}_{0} \langle \mu \rangle}_{{\mathbb
X}_0, \theta_0, p}(\varepsilon) + {\mathcal O}_{{\mathbb X}_0,
\theta_0, p}(\mu)) \mscr{Z} - {\mathcal O}^{\partial \mbb{X}_{0} \langle \mu \rangle}_{{\mathbb X}_0,
\theta_0, p}(1)}
\notag
\end{align}
\sublaboff{equation}
where
\begin{gather*}
\mscr{Z} = K_1(\varepsilon)(1+c_1 \sin \theta_0 + c_2
\cos \theta_0) + P_L[{\mathbb X}_0 + {\mathcal O}_{\theta_0, p}(1) +
{\mathcal O}_{{\mathbb X}_0, \theta_0,
p}(\varepsilon) + {\mathcal O}_{{\mathbb X}_0, \theta_0, p}(\mu)]
\\
\begin{aligned}
\tilde \alpha &= \alpha + {\mathcal O}_{{\mathbb X}_0, \theta_0,
p}(\mu) \\
\tilde \beta &= \beta + {\mathcal O}_{{\mathbb X}_0, \theta_0,
p}(\mu)
\end{aligned}
\end{gather*}
and the superscript $\partial \mbb{X}_{0} \langle \mu \rangle$ on a given term indicates
that the partial derivative of the term with respect to $\mbb{X}_{0}$ is $\mcal{O} (\mu)$.
We also have
\begin{equation*}
K_1(\varepsilon) \sim \varepsilon^{- \frac{\beta}{\alpha}}, \quad
\frac{1}{4} < \sqrt{c_1^2 + c_2^2} < \frac{1}{2}.
\end{equation*}
\end{proposition}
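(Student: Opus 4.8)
The plan is to obtain $\mscr{F}_{p} = \mscr{N}_{p} \circ \mscr{M}$ by direct substitution and then to propagate the $C^{3}$ estimates through that substitution. First I would note that the composition is legitimate: by the remark following Proposition~\ref{prop-s3.0} we have $\mscr{M}(\Sigma^{-}) \subset \Sigma^{+}$, so for $(\mbb{X}_{0},\theta_{0}) \in \Sigma^{-}$ the image $(\hat Z,\hat\theta) = \mscr{M}(\mbb{X}_{0},\theta_{0})$ lies in the domain of $\mscr{N}_{p}$. By~\pref{f6-s3.1c} this image is $\hat Z = \mscr{Z}$, with $\mscr{Z}$ exactly the quantity displayed in the statement, and $\hat\theta = \theta_{0} + \omega(L^{+}+L^{-}) + \mcal{O}_{\mbb{X}_{0},\theta_{0},p}(\mu)$. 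Feeding $Z_{0} = \mscr{Z}$ and $\theta_{0} = \hat\theta$ into the formula~\pref{f8-s2.2} for $\mscr{N}_{p}$ then reproduces the displayed identities~\pref{e:fx}--\pref{e:fthet}, apart from the $C^{3}$ claims on the $\mcal{O}$-symbols; the asymptotics $K_{1}(\varepsilon) \sim \varepsilon^{-\beta/\alpha}$ and the bound $\tfrac14 < \sqrt{c_{1}^{2}+c_{2}^{2}} < \tfrac12$ are quoted directly from Lemmas~\ref{lemma-added} and~\ref{prop1-s3.2}.

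Next I would carry out the $C^{3}$ bookkeeping. Each $\mcal{O}$-term in~\pref{f8-s2.2} is, by Proposition~\ref{prop2-s3.3}, a $C^{3}$-controlled function of $(Z_{0},\theta_{0},p)$ on $\Sigma^{+} \times (-\infty,\ln\mu_{0}]$; after substituting $Z_{0} = \mscr{Z}(\mbb{X}_{0},\theta_{0},p)$ and $\theta_{0} = \hat\theta(\mbb{X}_{0},\theta_{0},p)$ I would apply the chain rule. The key inputs are that $\mscr{Z}$ and $\hat\theta$, together with their first three derivatives in $(\mbb{X}_{0},\theta_{0},p)$, are bounded by constants of the form $K(\varepsilon)$ — concretely, $\mscr{Z}$ has size comparable to $K_{1}(\varepsilon)$ while $\partial_{\mbb{X}_{0}}\mscr{Z}$ is bounded by a fixed multiple of $P_{L}$, both visible in~\pref{f6-s3.1c} — and the uniform $C^{3}$ bounds of Propositions~\ref{prop-2.4} and~\ref{prop1-s3.3}, which guarantee that every $\mu$-dependence entering $\mscr{N}_{p}$ is already packaged as $\mu$ times a $C^{3}$-bounded function of $p$, so that no differentiation (recall $\partial_{p} = \mu\partial_{\mu}$) can promote a factor of $\mu$ to an $\mcal{O}(1)$ quantity. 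Throughout I would keep the distinction between constants bounded by $K$ and those bounded by $K(\varepsilon)$, exactly as in the remark following Corollary~\ref{coro2-s2.3b}, so that terms of the form $\mu\mcal{O}_{\mbb{X}_{0},\theta_{0},p}(1)$ are not conflated with $\mcal{O}_{\mbb{X}_{0},\theta_{0},p}(\mu)$.

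Finally I would justify the superscripts $\partial\mbb{X}_{0}\langle\mu\rangle$. These decorate precisely the terms $\mcal{O}_{\theta_{0},p}(\varepsilon)$ and $\mcal{O}_{\theta_{0},p}(1)$ inherited from~\pref{f8-s2.2} which, in Proposition~\ref{prop2-s3.3}, depend on $\theta_{0}$ and $p$ only and not on $Z_{0}$. Under the composition these become functions of $\hat\theta$ and $p$ alone, so their $\mbb{X}_{0}$-dependence enters solely through $\hat\theta = \theta_{0} + \omega(L^{+}+L^{-}) + \mcal{O}_{\mbb{X}_{0},\theta_{0},p}(\mu)$; since $\partial_{\mbb{X}_{0}}\hat\theta = \mcal{O}_{\mbb{X}_{0},\theta_{0},p}(\mu)$, the chain rule forces $\partial_{\mbb{X}_{0}}$ of each such composed term to be $\mcal{O}(\mu)$, which is exactly what the superscript encodes.

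The main obstacle is the $C^{3}$ propagation in the middle step. The awkward feature is that the inner map $\mscr{Z}$ is \emph{large}, of size comparable to $K_{1}(\varepsilon) \gg 1$, while several ambient quantities are small ($P_{L} \ll 1$, $\mu \ll \varepsilon \ll 1$), and the parameter is $p = \ln\mu$, so that the factor $\mu^{-1}$ entering $\mscr{N}_{p}$ through $\mbb{X}_{0} = \mu^{-1}\varepsilon + \mcal{O}_{Z_{0},\theta_{0},p}(1)$ on $\Sigma^{+}$ interacts delicately with differentiation in $p$. Getting the exact power of $\mu$ and the precise $\varepsilon$-dependence right in every $\mcal{O}$-symbol — rather than merely up to apparently harmless factors — is the part that demands genuine care.
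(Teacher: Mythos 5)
Your proposal is correct and follows essentially the same route as the paper: substitute the output $(\hat Z,\hat\theta)$ of $\mscr{M}$ from~\pref{f6-s3.1c} into the formula~\pref{f8-s2.2} for $\mscr{N}_p$, then push the $C^{3}$ bounds through by the chain rule using the $K(\varepsilon)$ control on $\hat Z$, the $\varepsilon$-independent control on $\hat\theta$, and $\partial_{\mbb{X}_0}\hat\theta = \mcal{O}(\mu)$ to justify the $\partial\mbb{X}_0\langle\mu\rangle$ superscripts. The paper's own proof does exactly this bookkeeping, including the same conversion of $\mu\,\mcal{O}_{\hat Z,\hat\theta,p}(1)$ into $\mcal{O}_{\mbb{X}_0,\theta_0,p}(\mu)$ that you flag as the delicate step.
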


\begin{proof}
We first examine the formulas for $\tilde
\alpha$ and $\tilde \beta$. The error terms in Proposition
~\ref{prop2-s3.3} have the form
\begin{equation*}
\mu {\mathcal O}_{\hat Z, \hat \theta, p}(1)
\end{equation*}
and $\hat Z$ and $\hat \theta$ are given in terms of ${\mathbb X}_0$,
$\theta_0$, and $p$ by ~(\ref{f6-s3.1c}). Using  ~(\ref{f6-s3.1c}), we see
that the $C^3$ norms of $\hat Z$ and $\hat \theta$ are $<
K(\varepsilon)$. It follows from the chain rule that
\begin{equation*}
\mu {\mathcal O}_{\hat Z, \hat \theta, p}(1) = {\mathcal
O}_{{\mathbb X}_0, \theta_0, p}(\mu).
\end{equation*}

We follow the same line of reasoning to compute ${\mathbb X}_1$ and
$\theta_1$.  We replace $Z_{0}$ and $\thet_{0}$ with $\hat{Z}$ and $\hat{\thet}$
in~\pref{f8-s2.2} and then substitute for $\hat{Z}$ and
$\hat{\thet}$ using~\pref{f6-s3.1c}.  Using~\pref{f8-s2.2}, we have
\begin{equation}\label{e:fprelim}
\begin{aligned}
{\mathbb X}_1 &=  \left(\frac{\mu}{\varepsilon + \mu{\mathcal
O}_{\hat Z, \hat \theta, p}(1)}\right)^{\frac{\tilde
\alpha}{\tilde \beta}-1}\left([1 + {\mathcal O}_{\hat \theta,
p}(\varepsilon) + \mu {\mathcal O}_{\hat Z, \hat \theta, p}(1)]
\hat Z - {\mathcal O}_{\hat \theta,
p}(1) \right)^{\frac{\tilde \alpha}{\tilde \beta}} \\
\theta_1 &= \hat \theta + \frac{\omega} {\beta + \mu {\mathcal
O}_{\hat Z, \hat \theta, p}(1)} \ln \frac{(\varepsilon + \mu
{\mathcal O}_{\hat Z, \hat \theta, p}(1))\mu^{-1}}{[1 + {\mathcal
O}_{\hat \theta, p}(\varepsilon) + \mu {\mathcal O}_{\hat Z, \hat
\theta, p}(1)] \hat Z - {\mathcal O}_{\hat \theta, p}(1)}.
\end{aligned}
\end{equation}

In~\pref{e:fprelim}, terms of the form $ \mu {\mathcal O}_{\hat Z, \hat \theta, p}(1)$ are
rewritten in the form ${\mathcal O}_{{\mathbb X}_0, \theta_0, p}(\mu)$ using
~(\ref{f6-s3.1c}). Terms of the form ${\mathcal O}_{\hat \theta, p}(\varepsilon)$ are
rewritten in the form ${\mathcal O}^{\partial \mbb{X}_{0} \langle \mu \rangle}_{{\mathbb
    X}_0, \theta_0, p}(\varepsilon)$ because the $C^3$ norm of $\hat \theta$ is bounded by
a constant $K$ independent of $\varepsilon$ and because $\frac{\partial
  \hat{\thet}}{\partial \mbb{X}_{0}} = \mcal{O} (\mu)$.  Reasoning analogously, terms of
the form ${\mathcal O}_{\hat \theta, p}(1)$ are rewritten in the form ${\mathcal
  O}^{\partial \mbb{X}_{0} \langle \mu \rangle}_{{\mathbb X}_0, \theta_0, p}(1)$.
\end{proof}

\subsection{Proof of Theorem ~\ref{th-s1.1}}
We are finally ready to prove Theorem ~\ref{th-s1.1}.

\smallskip

\noindent {\bf The two-parameter family $\{ \mscr{F}_{a, b_{n}}
\}$.} \label{ss:2pf} \  We write $\{ \mscr{F}_p \}$ as a
two-parameter family $\{ \mscr{F}_{a, b_{n}} \}$ of 2D maps. Both
$a$ and $b_{n}$ are derived from $\mu = e^p$ as follows. Let
$\mu_0
>0 $ be sufficiently small. Define $\gamma: (0, \mu_0] \to {\mathbb R}$
via $\gamma(\mu) = \frac{\omega}{\beta} \ln \mu^{-1}$. For $n \in
{\mathbb Z}^+$ satisfying $n \geqs (2 \pi \beta)^{-1} \omega \ln
\mu_0^{-1}$, let $\mu_n \in (0, \mu_0]$ be such that
$\gamma(\mu_n) = n$. Notice that $\mu_n \to 0$ monotonically. Set
$b_n = \mu_n$. For $\mu \in (\mu_{n+1}, \mu_n]$ and $a \in [0, 2
\pi) = \mbb{S}^{1}$, we define
\begin{equation*}
\mu(n, a) = \gamma^{-1}(\gamma(\mu_n) + a) = \mu_n
e^{-\tfrac{\beta}{\omega} a}
\end{equation*}
and
\begin{equation*}
p(n, a) = \ln \mu(n, a) = \ln \mu_n - \frac{\beta}{\omega} a.
\end{equation*}
Define
\begin{equation*}
\mscr{F}_{a, b_n} = \mscr{F}_{p(n, a)}.
\end{equation*}

\smallskip

\noindent {\bf Verification of~\pref{c1}--\pref{c4}.}  We
prove Theorem \ref{th-s1.1} by applying Propositions~\ref{prop3-s3.2}
and~\ref{prop4-s3.2}. We verify~\bpref{c1}--\bpref{c4} for $\mscr{F}_{a, b_n}$.
Proposition~\ref{prop1-s4.2} establishes~\bpref{c1}.

\begin{proposition}\label{prop1-s4.2}
We have
\begin{equation}
\|\mscr{F}_{a, b_n}({\mathbb X}, \theta) - (0, \mscr{F}_{a, 0} ({\mathbb
X}, \theta))\|_{C^3(\Sigma^- \times [0, 2\pi))} \to 0
\end{equation}
as $b_n \to 0$, where
\begin{equation}\label{f1-F}
\begin{split}
\mscr{F}_{a,0} (\mbb{X}, \thet) = \thet &+ \om (L^+ + L^-) + a + \frac{\om}{\be} \ln (\ve K_{1} (\ve)^{-1}) \\
&{}- \frac{\om}{\be} \ln \bigg[ \left( 1 + \mcal{O}_{\thet, p} (\ve) \right) \bigg( 1 + c_{1} \sin (\thet) + c_{2} \cos (\thet) \\
&\hspace{1.7cm} {}+ \frac{P_{L}}{K_{1} (\ve)} \left( \mbb{X} +
\mcal{O}_{\thet, p} (1) + \mcal{O}_{\mbb{X}, \thet, p} (\ve) \right) \bigg)
- K_{1} (\ve)^{-1} \mcal{O}_{\thet, p} (1) \bigg].
\end{split}
\end{equation}
\end{proposition}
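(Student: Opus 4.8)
The plan is to substitute the reparametrization $p = p(n,a) = \ln\mu_{n} - \tfrac{\beta}{\omega} a$ into the explicit formula for $\mscr{F}_{p}$ furnished by Proposition~\ref{prop-s4.1} and to track the fate of each $\mu$-dependent term as $n \to \infty$, so that simultaneously $\mu = \mu(n,a) \to 0$ and $b_{n} = \mu_{n} \to 0$. Two elementary facts will be used repeatedly: a term of type $\mathcal{O}_{\mathbb{X}_{0}, \theta_{0}, p}(\mu)$ has $C^{3}$-norm at most $K(\varepsilon)\mu$, and any $p$-derivative of such a term is again of type $\mathcal{O}(\mu)$; consequently a product $\mathcal{O}(\mu)\cdot\ln\mu^{-1}$, and more generally $\mathcal{O}(\mu)\cdot\mathrm{poly}(p)$ with $p = \ln\mu$, still tends to $0$ in $C^{3}$, polynomial growth in $|p|$ being dominated by the exponential decay of $\mu$.

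For the $\mathbb{X}$-component~\pref{e:fx} I would write the prefactor as $\mu^{c}$ with $c = \tfrac{\tilde\alpha}{\tilde\beta} - 1$. Since $\tilde\alpha = \alpha + \mathcal{O}(\mu)$, $\tilde\beta = \beta + \mathcal{O}(\mu)$ and $0 < \beta < \alpha$, the exponent $c$ is bounded below by a fixed positive number once $\mu$ is small, while the remaining factor $\big((1 + \mathcal{O}(\varepsilon) + \mathcal{O}(\mu))\mscr{Z} - \mathcal{O}(1)\big)^{\tilde\alpha/\tilde\beta}$ has $C^{3}$-norm bounded by a constant $K(\varepsilon)$ uniform in $n$, because $K_{1}(\varepsilon)$, $P_{L}$, $c_{1}$, $c_{2}$ are $\mu$-independent and $\mathbb{X}_{0}$ ranges over the bounded set $I$. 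Differentiating $\mu^{c(\mathbb{X}_{0}, \theta_{0}, p)}$ up to third order produces only terms of the form $\mu^{c}\,\mathrm{poly}(p)\,\partial^{\leqslant 3}c$ with $\partial c = \mathcal{O}(\mu)$, all of which vanish in the limit. Hence $\|\mathbb{X}_{1}\|_{C^{3}(\Sigma^{-} \times [0, 2\pi))} \to 0$, which yields the first coordinate of the asserted limit.

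For the $\theta$-component~\pref{e:fthet} the additive term $\mathcal{O}_{\mathbb{X}_{0}, \theta_{0}, p}(\mu)$ vanishes in $C^{3}$ and $\tfrac{\omega}{\beta + \mathcal{O}(\mu)} = \tfrac{\omega}{\beta} + \mathcal{O}(\mu)$, so it remains to treat the logarithmic term. I would split $\ln\frac{(\varepsilon + \mathcal{O}(\mu))\mu^{-1}}{(1 + \mathcal{O}(\varepsilon) + \mathcal{O}(\mu))\mscr{Z} - \mathcal{O}(1)}$ into numerator and denominator. The numerator equals $\varepsilon\mu^{-1}(1 + \mathcal{O}(\mu))$, so $\ln(\text{num}) = \ln\mu^{-1} + \ln\varepsilon + \mathcal{O}(\mu)$. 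In the denominator I would factor $K_{1}(\varepsilon)$ out of $\mscr{Z}$ exactly as in its definition in Proposition~\ref{prop-s4.1} and discard the $\mathcal{O}(\mu)$ terms, obtaining $\ln(\text{denom}) = \ln K_{1}(\varepsilon) + \ln[\,\cdots\,] + \mathcal{O}(\mu)$, where $[\,\cdots\,]$ is the bracketed expression appearing in~\pref{f1-F}; it is bounded away from $0$ and $\infty$ since $\tfrac14 < \sqrt{c_{1}^{2} + c_{2}^{2}} < \tfrac12$ keeps $1 + c_{1}\sin\theta_{0} + c_{2}\cos\theta_{0}$ positive and $P_{L}/K_{1}(\varepsilon) \ll 1$, so the logarithm is legitimate and $C^{3}$. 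Subtracting, multiplying by $\tfrac{\omega}{\beta}$ and regrouping converts the non-divergent part into $\omega(L^{+} + L^{-}) + \tfrac{\omega}{\beta}\ln(\varepsilon K_{1}(\varepsilon)^{-1})$ together with precisely the $-\tfrac{\omega}{\beta}\ln[\,\cdots\,]$ term of~\pref{f1-F}; the superscripts $\partial\mathbb{X}_{0}\langle\mu\rangle$ in~\pref{e:fthet} are exactly what allow the corresponding terms in~\pref{f1-F} to be written without $\mathbb{X}$-dependence, since their $\mathbb{X}_{0}$-derivatives are $\mathcal{O}(\mu)$. The single surviving divergent contribution is $\tfrac{\omega}{\beta}\ln\mu^{-1}$, which by the very construction of the reparametrization equals, modulo $2\pi$, the parameter $a$ up to an $n$-independent additive constant that is absorbed into $\omega(L^{+} + L^{-})$ (this is the reason $a$ was introduced; compare Section~\ref{s:modelas}). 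Thus $\theta_{1} \to \mscr{F}_{a,0}(\mathbb{X}, \theta)$ in $C^{3}$ on $\mathbb{S}^{1}$, completing the proof.

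The main obstacle is the $C^{3}$ bookkeeping across the singular limit: one must verify that replacing $\tilde\alpha/\tilde\beta$ by $\alpha/\beta$ and $\tfrac{\omega}{\beta + \mathcal{O}(\mu)}$ by $\tfrac{\omega}{\beta}$ remains harmless after these quantities have been multiplied by the divergent factors $\mu^{c}$ and $\ln\mu^{-1}$ and after up to three $p$-derivatives are taken. As sketched above this reduces to the elementary estimate $\mathrm{poly}(p)\,\mu \to 0$ together with the stability of the class $\mathcal{O}(\mu)$ under $\partial_{p}$; everything else is routine expansion of powers and logarithms with $\mathcal{O}(\mu)$ remainders.
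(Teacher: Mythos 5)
Your proposal is correct and follows essentially the same route as the paper's proof: the only genuinely singular term is $\tfrac{\om}{\be + \mcal{O}(\mu)} \ln \mu^{-1}$, which you split into $\tfrac{\om}{\be} \ln \mu^{-1}$ (absorbed into $a$ modulo $2\pi$ by the reparametrization) plus an $\mcal{O}(\mu) \ln \mu^{-1}$ error vanishing in $C^{3}$, while the $\mbb{X}$-component is killed by the factor $\mu^{\tilde{\al}/\tilde{\be} - 1}$ with $\al > \be$. The paper states these two points very tersely; your additional bookkeeping (boundedness of the bracket, stability of the class $\mcal{O}(\mu)$ under $\pdop{p}$, the role of the $\partial \mbb{X}_{0} \langle \mu \rangle$ superscripts) is a faithful elaboration rather than a different argument.
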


\begin{proof}
The only problematic term in~\pref{e:fthet} has the form
\begin{equation*}
\frac{\omega}{\beta + {\mathcal O}_{{\mathbb X}_0, \theta_0,
p}(\mu)} \ln \mu^{-1},
\end{equation*}
which we write as
\begin{equation*}
\frac{\omega}{\beta} \ln \mu^{-1} + \frac{\omega \cdot {\mathcal
O}_{{\mathbb X}_0, \theta_0, p}(\mu)}{\beta(\beta + {\mathcal
O}_{{\mathbb X}_0, \theta_0, p}(\mu))} \ln \mu^{-1}.
\end{equation*}
Observe that the $C^3$ norm of the second term $\to 0$ as $b_n \to
0$ and the first term may be computed modulo $2 \pi$ and is
therefore equal to $a$.  Viewing $\mu$ as a function of $a$, the
$C^3$ norm of ${\mathbb X}_1$ is bounded by
\begin{equation*}
K( \varepsilon) \mu^{\tfrac{\tilde \alpha}{\tilde \beta}-1}
\end{equation*}
and therefore decays to $0$ as $b_n \to 0$ provided that~\bpref{li:h1}\bpref{li:h1b} holds.
\end{proof}

For~\bpref{c2} we apply Proposition~\ref{prop2-s4.2} to the
family of circle maps
\begin{equation}\label{f1a-F}
\begin{split}
\mscr{F}_{a,0} (0, \thet) = \thet &+ \om (L^+ + L^-) + a + \frac{\om}{\be} \ln (\ve K_{1} (\ve)^{-1}) \\
&{}- \frac{\om}{\be} \ln \bigg[ \left( 1 + \mcal{O}_{\thet, p}
(\ve) \right) \bigg( 1 + c_{1} \sin (\thet) + c_{2} \cos (\thet) \\
&\hspace{1.7cm} {}+ \frac{P_{L}}{K_{1} (\ve)} (\mcal{O}_{\thet, p}
(1) + \mcal{O}_{\mbb{X}, \thet, p} (\ve)) \bigg) - K_{1} (\ve)^{-1} \mcal{O}_{\thet, p}(1) \bigg].
\end{split}
\end{equation}
To apply Proposition~\ref{prop2-s4.2} to the family $\{
\mscr{F}_{a, 0} (0, \thet) \}$, we set
\begin{gather*}
{\mathcal K} = \frac{\om}{\be}\\
\Psi(\theta) = - \ln (1 + c_1 \sin \theta + c_2 \cos \theta)\\
\Phi (\thet, a) = \mscr{F}_{a, 0} (0, \thet) - \ga - \thet - a - \mscr{K}
\Psi (\thet)
\end{gather*}
where
\begin{equation*}
\ga = \om (L^{+} + L^{-}) + \frac{\om}{\be} \ln (\ve K_{1} (\ve)^{-1}).
\end{equation*}
The assumption on the $C^{3}$ norm of $\Phi$ is satisfied if $\ve$
is sufficiently small.

Hypothesis~\bpref{c3} follows directly from ~\pref{f1-F}.  Hypothesis~\bpref{c4} follows from a
direct computation using (\ref{e:fprelim}).  Finally, to apply Proposition~\ref{prop4-s3.2}
we need to verify that $\lambda_0>\ln 10$.
This follows if $\omega$ is sufficiently large.  The
proof of Theorem \ref{th-s1.1} is complete.

\section{Computational proofs}\label{s6}

\subsection{Proof of Proposition \ref{prop-2.4}}\label{sadd-1} \
Let ${\mathbb F} = {\mathbb F}({\mathbb X}, {\mathbb Y}, \theta;
\mu)$ and ${\mathbb G} = {\mathbb G}({\mathbb X}, {\mathbb Y},
\theta; \mu)$ be as in equation ~(\ref{f1-s2.1d}). For a
combination ${\mathbb Z} = {\mathbb X}^{d_1} {\mathbb Y}^{d_2}
\mu^{d_3}$ of powers of the variables ${\mathbb X}$, ${\mathbb
Y}$, and $\mu$, let $\partial^k_{\mathbb Z}$ denote the
corresponding partial derivative operator, where $k = d_1 + d_2 +
d_3$ is the order. There exists $K_3 > 0$ such that for every
${\mathbb Z}$ of order $\leqs 3$ and $0 \leqs i \leqs 3$, we have
\begin{equation}\label{sharp-de}
|\partial_{\mathbb Z}^k (\partial_{\theta^i}^i {\mathbb F} \cdot
{\mathbb Z})| < K_3, \quad |\partial_{\mathbb Z}^k
(\partial_{\theta^i}^i {\mathbb G} \cdot {\mathbb Z})| < K_3
\end{equation}
on ${\mathbb D}_{t, q_0, p}$. This is because the $C^3$ norms of
${\bf F}({\bf X}, {\bf Y}, \theta; \mu)$ and ${\bf G}({\bf X},
{\bf Y}, \theta; \mu)$ are bounded on $\mscr{U}_{\varepsilon}
\times [0, \mu_0]$ and because ${\mathbb F}({\mathbb X}, {\mathbb
Y}, \theta; \mu) = {\bf F}(\mu {\mathbb X}, \mu {\mathbb Y},
\theta; \mu)$ and ${\mathbb G}({\mathbb X}, {\mathbb Y}, \theta;
\mu) = {\bf G}(\mu {\mathbb X}, \mu {\mathbb Y}, \theta; \mu)$.

\vspace{0.1cm}

\noindent {\mathversion{bold}  $C^0$} {\bfseries estimates.} Using
~(\ref{sharp-de}) with $i = k = 0$ and ~(\ref{f5-s2.2}), we have
\begin{equation}\label{f3-s2.3}
\|U \|_{C^0({\mathbb D}_{t, q_0, p})} < K_{3} \mu, \quad \|V
\|_{C^0({\mathbb D}_{t, q_{0}, p})} < K_3 \mu.
\end{equation}

\vspace{0.1cm}

\noindent {\mathversion{bold} $C^1$} {\bfseries estimates.} We now
estimate the first derivatives.

\vspace{0.1cm}

\noindent {\sffamily On $\partial_{{\mathbb Y}_0} U$ and
$\partial_{{\mathbb Y}_0} V$.} \ Using $\theta(t) = \theta_0 +
\omega t$, we have $\partial_{{\mathbb Y}_0} \theta = 0$. Using
~(\ref{f5-s2.2}), we have
\begin{align}
\sublabon{equation}
\partial_{{\mathbb Y}_0} U &= \mu t^{-1} \int_0^t
\left( \partial_{{\mathbb X}} {\mathbb F} \cdot \partial_{{\mathbb
Y}_0} {\mathbb X} + \partial_{\mathbb Y}{\mathbb F} \cdot
\partial_{{\mathbb Y}_0}
{\mathbb Y} \right) ds
\label{spatialU} \\
\partial_{{\mathbb Y}_0} V &= \mu t^{-1} \int_0^t
\left( \partial_{{\mathbb X}} {\mathbb G} \cdot \partial_{{\mathbb
Y}_0} {\mathbb X} + \partial_{\mathbb Y} {\mathbb G} \cdot
\partial_{{\mathbb Y}_0} {\mathbb Y} \right) ds.
\label{spatialV}
\end{align}
\sublaboff{equation} To make these formulas useful, we need to
write $\partial_{{\mathbb Y}_0} {\mathbb X}$ and
$\partial_{{\mathbb Y}_0} {\mathbb Y}$ in terms of
$\partial_{{\mathbb Y}_0} U$ and $\partial_{{\mathbb Y}_0} V$. For
this purpose we use ~(\ref{f4-s2.2}). We have
\begin{equation}\label{f5-s2.3}
\begin{aligned}
\partial_{{\mathbb Y}_0} {\mathbb X} &= t {\mathbb X}
\partial_{{\mathbb Y}_0} U \\
\partial_{{\mathbb Y}_0} {\mathbb Y} &= t {\mathbb Y}
\partial_{{\mathbb Y}_0} V + \frac{\mathbb Y}{{\mathbb Y}_0}.
\end{aligned}
\end{equation}
Combining~(\ref{spatialU}),~\pref{spatialV}, and~(\ref{f5-s2.3}),
we obtain
\begin{equation}\label{f6-s2.3}
\begin{aligned}
\partial_{{\mathbb Y}_0} U &= \mu t^{-1} \int_0^t
\left( \partial_{{\mathbb X}} {\mathbb F} \cdot {\mathbb X} \cdot
s \partial_{{\mathbb Y}_0} U + \partial_{\mathbb Y}{\mathbb F}
\cdot {\mathbb Y} \cdot s
\partial_{{\mathbb Y}_0} V \right) ds + \mu t^{-1} \int_0^t
\partial_{\mathbb Y}{\mathbb F} \cdot \frac{\mathbb Y}{{\mathbb Y}_0} \, ds
\\
\partial_{{\mathbb Y}_0} V &= \mu t^{-1} \int_0^t
\left( \partial_{{\mathbb X}} {\mathbb G} \cdot {\mathbb X} \cdot
s \partial_{{\mathbb Y}_0} U + \partial_{\mathbb Y}{\mathbb G}
\cdot {\mathbb Y} \cdot s \partial_{{\mathbb Y}_0} V \right) ds +
\mu t^{-1} \int_0^t
\partial_{\mathbb Y}{\mathbb G}
\cdot \frac{\mathbb Y}{{\mathbb Y}_0} \, ds.
\end{aligned}
\end{equation}
Using ~(\ref{sharp-de}), we have
\begin{equation*}
|\partial_{\mathbb X} {\mathbb F} \cdot {\mathbb X}| < K_{3},
\quad |\partial_{\mathbb X} {\mathbb G} \cdot {\mathbb X}| <
K_{3}, \quad |\partial_{\mathbb Y} {\mathbb F} \cdot {\mathbb Y}|
< K_{3}, \quad |\partial_{\mathbb Y}{\mathbb G} \cdot {\mathbb Y}|
< K_3.
\end{equation*}
Using ~(\ref{f6-s2.3}), we have
\begin{equation}\label{f7-s2.3}
\begin{aligned}
|\partial_{{\mathbb Y}_0} U| &\leqs K \mu t^{-1} \int_0^t \left(|s
\partial_{{\mathbb Y}_0} U| + |s
\partial_{{\mathbb Y}_0} V| \right) ds + K \mu \\
|\partial_{{\mathbb Y}_0} V| &\leqs K \mu t^{-1} \int_0^t \left(
|s
\partial_{{\mathbb Y}_0} U| + |s \partial_{{\mathbb Y}_0} V|
\right) ds + K \mu,
\end{aligned}
\end{equation}
from which it follows that
\begin{equation*}
|\partial_{{\mathbb Y}_0} U| < K \mu, \quad |\partial_{{\mathbb
Y}_0} V| < K \mu.
\end{equation*}

\vspace{0.1cm}

\noindent {\sffamily On $\partial_{{\mathbb X}_0} U$ and
$\partial_{{\mathbb X}_0} V$.} Mimic the proof above.

\vspace{0.1cm}

\noindent {\sffamily On $\partial_{\theta_0} U$ and
$\partial_{\theta_0} V$.} We follow similar lines of computation.
Since $\partial_{\theta_0} \theta = 1$, we have
\begin{align*}
\partial_{\theta_0} U &= \mu t^{-1} \int_0^t
\left( \partial_{{\mathbb X}} {\mathbb F} \cdot
\partial_{\theta_0} {\mathbb X} + \partial_{\mathbb Y}{\mathbb
F} \cdot
\partial_{\theta_0}
{\mathbb Y}  + \partial_{\theta} {\mathbb F}  \right) ds \\
\partial_{\theta_0} V &= \mu t^{-1} \int_0^t
\left( \partial_{{\mathbb X}} {\mathbb G} \cdot
\partial_{\theta_0} {\mathbb X} + \partial_{\mathbb Y} {\mathbb
G} \cdot
\partial_{\theta_0} {\mathbb Y}
+ \partial_{\theta} {\mathbb G} \right) ds.
\end{align*}
Analogous to ~(\ref{f5-s2.3}), we have
\begin{equation*}
\partial_{\theta_0} {\mathbb X} = t {\mathbb X}
\partial_{\theta_0} U, \quad \partial_{\theta_0} {\mathbb Y} = t {\mathbb Y}
\partial_{\theta_0} V.
\end{equation*}
Arguing as above, we conclude that
\begin{equation*}
|\partial_{\theta_0} U| < K \mu, \quad |\partial_{\theta_0} V| < K
\mu.
\end{equation*}

\vspace{0.1cm}

\noindent {\sffamily On $\partial_p U$ and $\partial_p V$.} We
follow similar lines of computation. Note that we have
\begin{equation*}
\partial_p \mu = \mu, \quad \partial_p {\mathbb F} = \mu
\partial_{\mu} {\mathbb F},
\end{equation*}
and so on. Starting with ~(\ref{f5-s2.2}), we have
\begin{equation}\label{f8-s2.3}
\begin{aligned}
\partial_{p} U &= \mu t^{-1} \int_0^t {\mathbb F} \, ds +
\mu t^{-1} \int_0^t \left( \partial_{{\mathbb X}} {\mathbb F}
\cdot \partial_{p} {\mathbb X} + \partial_{\mathbb Y}{\mathbb F}
\cdot \partial_{p}
{\mathbb Y} + \mu \partial_{\mu} {\mathbb F} \right) ds \\
\partial_{p} V &= \mu t^{-1} \int_0^t {\mathbb G} \, ds +
\mu t^{-1} \int_0^t \left( \partial_{{\mathbb X}} {\mathbb G}
\cdot
\partial_{p} {\mathbb X} + \partial_{\mathbb Y}{\mathbb G} \cdot
\partial_{p} {\mathbb Y} + \mu \partial_{\mu} {\mathbb G} \right)
ds
\end{aligned}
\end{equation}
and using~(\ref{f4-s2.2}) we have
\begin{equation}\label{f9-s2.3}
\begin{aligned}
\partial_p {\mathbb X} &= t {\mathbb X} \partial_p U  \\
\partial_p {\mathbb Y} &= t {\mathbb Y} \partial_p V.
\end{aligned}
\end{equation}
Now argue as above.

\vspace{0.1cm}

\noindent {\sffamily On $\partial_t U$ and $\partial_t V$.} \ The
partial derivatives of $U$ and $V$ with respect to $t$ are easier
to estimate because when differentiating with respect to $t$ using
~(\ref{f5-s2.2}), no derivatives are involved on the right-hand
side so the estimates on $\pdop{t} U$ and $\pdop{t} V$ are
obtained directly from $C^0$ estimates. We have
\begin{equation*}
|\partial_t U| < K \mu, \quad |\partial_t V| < K \mu.
\end{equation*}
This completes the desired estimates on the first derivatives.

\vspace{0.1cm}

\noindent {\mathversion{bold} $C^2$} {\bfseries estimates.} We now
move to the second derivatives. We estimate $\partial^2_{{\mathbb
Y}_0 {\mathbb Y}_0}U$ and $\partial^2_{{\mathbb Y}_0 {\mathbb
Y}_0} V$ first.  Using~(\ref{spatialU}), we have
\begin{align*}
\partial_{{\mathbb Y}_0 {\mathbb Y}_0}^2 U &= \mu t^{-1} \int_0^t
\left( \partial_{{\mathbb X}{\mathbb X}}^2 {\mathbb F} \cdot
(\partial_{{\mathbb Y}_0} {\mathbb X})^2 + 2 \partial_{{\mathbb
X}{\mathbb Y}}^2 {\mathbb F} \cdot (\partial_{{\mathbb Y}_0}
{\mathbb X})(\partial_{{\mathbb Y}_0} {\mathbb Y}) +
\partial_{{\mathbb Y}{\mathbb Y}}({\mathbb F} \cdot
\partial_{{\mathbb Y}_0}
{\mathbb Y})^2  \right) ds \\
&\qquad {}+ \mu t^{-1} \int_0^t \left( \partial_{{\mathbb X}} {\mathbb F}
\cdot \partial_{{\mathbb Y}_0 {\mathbb Y}_0}^2 {\mathbb X} +
\partial_{\mathbb Y}{\mathbb F} \cdot
\partial_{{\mathbb Y}_0{\mathbb Y}_0}^2
{\mathbb Y} \right) ds.
\end{align*}
Using ~(\ref{f5-s2.3}), we have
\begin{equation}\label{f1-c2}
\begin{aligned}
\partial_{{\mathbb Y}_0 {\mathbb Y}_0}^2 {\mathbb X} &=
t \partial_{{\mathbb Y}_0} {\mathbb X} \cdot \partial_{{\mathbb
Y}_0} U + t {\mathbb X}
\partial_{{\mathbb Y}_0 {\mathbb Y}_0}^2 U \\
\partial_{{\mathbb Y}_0 {\mathbb Y}_0}^2 {\mathbb Y} &=
t \partial_{{\mathbb Y}_0} {\mathbb Y} \cdot
\partial_{{\mathbb Y}_0} V  + t {\mathbb Y} \cdot
\partial_{{\mathbb Y}_0 {\mathbb Y}_0} V
+ \frac{\partial_{{\mathbb Y}_0} \mathbb Y}{{\mathbb Y}_0} -
\frac{\mathbb Y}{{\mathbb Y}_0^2}.
\end{aligned}
\end{equation}
Therefore, $\partial_{{\mathbb Y}_0 {\mathbb Y}_0}^2 U$ is given
by
\begin{equation}\label{f2-c2}
\begin{split}
\partial_{{\mathbb Y}_0 {\mathbb Y}_0}^2 U &= \mu t^{-1} \int_0^t
\left( \partial_{{\mathbb X}{\mathbb X}}^2 {\mathbb F} \cdot
(\partial_{{\mathbb Y}_0} {\mathbb X})^2 + 2 \partial_{{\mathbb
X}{\mathbb Y}}^2 {\mathbb F} \cdot (\partial_{{\mathbb Y}_0}
{\mathbb X})(\partial_{{\mathbb Y}_0} {\mathbb Y}) +
\partial_{{\mathbb Y}{\mathbb Y}}({\mathbb F} \cdot
\partial_{{\mathbb Y}_0}
{\mathbb Y})^2  \right) ds \\
&\qquad {}+ \mu t^{-1} \int_0^t \left( \partial_{{\mathbb X}} {\mathbb F}
\cdot \partial_{{\mathbb Y}_0} {\mathbb X} \cdot s
\partial_{{\mathbb Y}_0} U +
\partial_{\mathbb Y}{\mathbb F} \cdot
\partial_{{\mathbb Y}_0} {\mathbb Y} \cdot
s \partial_{{\mathbb Y}_0} V \right) ds \\
&\qquad {}+ \mu t^{-1}\int_0^t \partial_{\mathbb Y}{\mathbb F}\cdot
\left(\frac{\partial_{{\mathbb Y}_0} \mathbb Y}{{\mathbb Y}_0} -
\frac{\mathbb Y}{{\mathbb Y}_0^2} \right) ds\\
&\qquad {}+ \mu t^{-1} \int_0^t \left( \partial_{{\mathbb X}} {\mathbb F}
\cdot {\mathbb X} \cdot s \partial_{{\mathbb Y}_0 {\mathbb Y}_0}^2
U + \partial_{\mathbb Y}{\mathbb F} \cdot {\mathbb Y} \cdot s
\partial_{{\mathbb Y}_0{\mathbb Y}_0}^2 V \right) ds.
\end{split}
\end{equation}
To estimate the first three integrals in~\pref{f2-c2}, we use
~(\ref{f5-s2.3}) for $\partial_{{\mathbb Y}_0} {\mathbb X}$ and
$\partial_{{\mathbb Y}_0} {\mathbb Y}$. Using the first derivative
estimates and using ~(\ref{sharp-de}) repeatedly, we bound these
integrals by $K \mu$. Note that we also need ${\mathbb Y}_0 > 1$
(see Corollary ~\ref{coro1-s2.3c}) for the third integral.
Together with an analogous formula for $\partial_{{\mathbb Y}_0
{\mathbb Y}_0}^2 V$ in which we replace ${\mathbb F}$ with
${\mathbb G}$, we conclude that
\begin{equation*}
|\partial_{{\mathbb Y}_0 {\mathbb Y}_0}^2 U| < K \mu, \quad
|\partial_{{\mathbb Y}_0 {\mathbb Y}_0}^2 V| < K \mu.
\end{equation*}

All other second derivatives are estimated similarly. Here we skip
the details to avoid repetitive computations.

\vspace{0.1cm}

\noindent {\mathversion{bold} $C^3$} {\bfseries estimates.} Third
derivatives are estimated in the same spirit. Since the formulas
for a given third derivative depend on previous computations of
relevant second derivatives, here we estimate $\partial_{{\mathbb
Y}_0 {\mathbb Y}_0 p}^3 U$ and $\partial_{{\mathbb Y}_0 {\mathbb
Y}_0 p}^3 V$ as a representative example. Of all of the third
derivatives, these are the most tedious to compute.

To compute $\partial_{{\mathbb Y}_0 {\mathbb Y}_0 p}^3 U$ we apply
$\partial_p$ to ~(\ref{f2-c2}). The explicit factor $\mu$ written
in front of all integrals generates a collection of terms that is
identical to the right-hand side of ~(\ref{f2-c2}).  We showed
when estimating second derivatives that the size of each of these
terms in bounded by $K \mu$.

The remaining terms are produced by applying $\partial_p$ to the
functions inside of the integrals in ~(\ref{f2-c2}). The terms
produced from the first three integrals are estimated using the
$C^2$ estimates. Estimate ~(\ref{sharp-de}) is used repeatedly. It
is critically important that potentially problematic terms in the
form of powers of ${\mathbb Y}$ and ${\mathbb X}$, introduced by
using the likes of ~(\ref{f5-s2.3}),~(\ref{f9-s2.3}), and
~(\ref{f1-c2}), are always matched perfectly with corresponding
partial derivatives with respect to ${\mathbb F}$ or ${\mathbb
G}$. Applying $\partial_p$ to the fourth integral, we obtain an
integral term of the form
\begin{equation*}
(I) = \mu t^{-1} \int_0^t \left( \partial_{{\mathbb X}} {\mathbb
F} \cdot {\mathbb X} \cdot s \partial_{{\mathbb Y}_0 {\mathbb Y}_0
p}^3 U + \partial_{{\mathbb Y}}{\mathbb F} \cdot {\mathbb Y} \cdot
s
\partial_{{\mathbb Y}_0{\mathbb Y}_0 p}^3 V \right) ds
\end{equation*}
and a collection of other terms that can be treated the same way
as the terms produced by differentiating the first three
integrals. We have
\begin{equation*}
|(I)| \leqs K \mu t^{-1} \int_0^t \left( |s \partial_{{\mathbb
Y}_0 {\mathbb Y}_0 p}^3 U| + |s
\partial_{{\mathbb Y}_0{\mathbb Y}_0 p}^3 V| \right) ds.
\end{equation*}
Combining this analysis with analogous estimates for
$|\partial_{{\mathbb Y}_0{\mathbb Y}_0 p}^3 V|$, we obtain
\begin{equation*}
|\partial_{{\mathbb Y}_0 {\mathbb Y}_0 p}^3 U| < K \mu, \quad
|\partial_{{\mathbb Y}_0{\mathbb Y}_0 p}^3 V| < K \mu.
\end{equation*}
This completes the proof of Proposition ~\ref{prop-2.4}.

\subsection{Proof of Proposition \ref{prop1-s3.3}}\label{sadd-2}
The proof of this proposition is lengthy because of the
complicated composition process explained earlier in Sect.
\ref{s3.3}.

\vspace{0.1cm}

\noindent {\mathversion{bold} $C^0$} {\bfseries estimates.}  We
first establish a $C^{0}$ control on $T$.

\begin{lemma}\label{lem1-T}
There exist constants $K_4 < K_5$ independent of $\varepsilon$
such that for all $q_0 \in \Sigma^+$, we have $K_4 \ln \mu^{-1} <
T(q_0; \mu) < K_5 \ln \mu^{-1}$.
\end{lemma}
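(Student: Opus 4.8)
The plan is to read $T$ off the $\mathbb{Y}$-component of the orbit inside $\mscr{U}_{\varepsilon}$. This is the coordinate that expands and whose value on $\Sigma^{-}$ is pinned to $\mu^{-1}\varepsilon$, whereas the contracting coordinate $\mathbb{X}$ is far less useful here, since its value on $\Sigma^{-}$ is only known to be bounded (and may be close to $0$).

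Concretely, I would first transport the orbit of $q_{0} = (Z_{0},\theta_{0}) \in \Sigma^{+}$ into the $(\mathbb{X},\mathbb{Y},\theta)$-coordinates of~\pref{f1-s2.1d}. Proposition~\ref{prop-s2.3c} gives the initial data $\mathbb{X}_{0} = \mu^{-1}\varepsilon + \mathcal{O}_{Z_{0},\theta_{0},p}(1)$ and $\mathbb{Y}_{0} = (1 + \mathcal{O}_{\theta_{0},p}(\varepsilon) + \mu\mathcal{O}_{Z_{0},\theta_{0},p}(1))Z_{0} - \mathcal{O}_{\theta_{0},p}(1)$, and combining the range of $Z_{0}$ from the definition~\pref{f2-s2.3} of $\Sigma^{+}$ with Corollary~\ref{coro1-s2.3c} yields $1 < \mathbb{Y}_{0} \leqs C(\varepsilon)$, where $C(\varepsilon)$ depends on $\varepsilon$ but not on $\mu$ or on $q_{0}$. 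By~\pref{f3-s2.2} together with Proposition~\ref{prop-2.4}, the orbit takes the form
\begin{equation*}
\mathbb{X}(t) = \mathbb{X}_{0}\, e^{(-\alpha + U(t))t}, \qquad \mathbb{Y}(t) = \mathbb{Y}_{0}\, e^{(\beta + V(t))t}, \qquad \theta(t) = \theta_{0} + \omega t,
\end{equation*}
with $|U(t)|,|V(t)| < K\mu$. For $\mu_{0} = \mu_{0}(\varepsilon)$ small enough we have $-\alpha + U(t) < 0 < \beta + V(t)$, so $\mathbb{X}(t)$ is decreasing and $\mathbb{Y}(t)$ is increasing; since $\mathbb{X}(t) \leqs \mathbb{X}_{0} < 2\mu^{-1}\varepsilon$ and $\mathbb{Y}_{0} < \mu^{-1}\varepsilon$, the orbit remains in $\mscr{U}_{\varepsilon}$ at least until $\mathbb{Y}$ reaches the level $\mu^{-1}\varepsilon$, which by Corollary~\ref{coro2-s2.3b} (where $\mathbb{Y} = \mu^{-1}\varepsilon + \mathcal{O}_{\mathbb{X},\theta,p}(1)$ describes $\Sigma^{-}$) is precisely where $\Sigma^{-}$ lies. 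Thus $T$ is well-defined and is the first time at which $\mathbb{Y}(T) = \mu^{-1}\varepsilon + \mathcal{O}_{Z_{0},\theta_{0},p}(1)$.

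I would then solve this relation for $T$. Writing $\mathbb{Y}(T) = \mu^{-1}\varepsilon\,(1 + \mathcal{O}(\mu\varepsilon^{-1}))$ (valid since $\mu \ll \varepsilon$) and taking logarithms,
\begin{equation*}
(\beta + V(T))\,T = \ln\mu^{-1} - |\ln\varepsilon| - \ln\mathbb{Y}_{0} + \mathcal{O}(\mu\varepsilon^{-1}),
\end{equation*}
so $\beta T = (1 + \mathcal{O}(\mu))\big(\ln\mu^{-1} - D + \mathcal{O}(\mu\varepsilon^{-1})\big)$ with $D := |\ln\varepsilon| + \ln\mathbb{Y}_{0} \in (|\ln\varepsilon|,\, |\ln\varepsilon| + \ln C(\varepsilon)]$, a quantity bounded in terms of $\varepsilon$ alone and uniformly in $q_{0} \in \Sigma^{+}$. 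Fixing $\varepsilon$ and then shrinking $\mu_{0} = \mu_{0}(\varepsilon)$ so that $D$ and the error $\mathcal{O}(\mu\varepsilon^{-1})$ are negligible against $\ln\mu^{-1}$ while $\mathcal{O}(\mu)$ is negligible against $1$, one gets $\tfrac12\ln\mu^{-1} < \beta T < 2\ln\mu^{-1}$ (say), and the lemma follows with $K_{4}$ and $K_{5}$ that may be taken as $\tfrac{1}{2\beta}$ and $\tfrac{2}{\beta}$, both independent of $\varepsilon$.

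The one step requiring care is the bookkeeping of the $\varepsilon$-dependence: every quantity entering the displayed identity additively besides $\ln\mu^{-1}$ — the conversion errors $\mathcal{O}(1)$ of Proposition~\ref{prop-s2.3c} and Corollary~\ref{coro2-s2.3b}, the term $\ln\mathbb{Y}_{0}$ (which is why the range of $Z_{0}$ in~\pref{f2-s2.3} and the lower bound $\mathbb{Y}_{0} > 1$ of Corollary~\ref{coro1-s2.3c} must be invoked), and $|\ln\varepsilon|$ itself — must be controlled by a bound depending on $\varepsilon$ alone and independent of $q_{0}$ and $\mu$, so that all of it is absorbed once $\mu$ is taken small relative to $\varepsilon$. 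Everything else is just the elementary estimate for a scalar linear ODE subject to a small time-dependent perturbation of its rate.
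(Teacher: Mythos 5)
Your proposal is correct and follows essentially the same route as the paper: both solve $\mathbb{Y}(T) = \mathbb{Y}_0 e^{(\beta + V(T))T}$ for $T$, use the fact that $\mathbb{Y}(T) \approx \mu^{-1}\varepsilon$ on $\Sigma^-$ together with $|V(T)| < K\mu$ and $1 < \mathbb{Y}_0 < K(\varepsilon)$, and absorb the $\varepsilon$-dependent additive terms into $\ln \mu^{-1}$ by taking $\mu_0 = \mu_0(\varepsilon)$ small. Your extra remarks on well-definedness of $T$ and the explicit $\varepsilon$-bookkeeping are fine but not needed beyond what the paper records.
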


\begin{proof}[Proof of Lemma~\ref{lem1-T}]
Using
\begin{equation*}
{\mathbb Y}(T) = {\mathbb Y}_0 e^{(\beta + V(T))T}
\end{equation*}
we obtain
\begin{equation*}
T = \frac{1}{\beta + V(T)} \ln \frac{{\mathbb Y}(T)}{{\mathbb
Y}_0}.
\end{equation*}
Since $({\mathbb X}(T), {\mathbb Y}(T), \theta(T))$ is on
$\Sigma^-$, Proposition ~\ref{prop-s2.3b} implies that
\begin{equation*}
{\mathbb Y}(T) \approx  \mu^{-1} \varepsilon
\end{equation*}
and the desired estimates follow from $|V(T)|< K \mu$ and $1 <
{\mathbb Y}_0 < K(\varepsilon)$.
\end{proof}

\begin{lemma}\label{lem2-T}
We have $\mu^{-1} e^{-\alpha T} < 1$.
\end{lemma}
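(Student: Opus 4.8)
The plan is to reduce the claimed inequality to a lower bound on $T$ and then read that bound off from the $C^{0}$ control already established. First note that $\mu^{-1} e^{-\alpha T} < 1$ is equivalent to $e^{-\alpha T} < \mu$, i.e.\ to $\alpha T > \ln \mu^{-1}$, i.e.\ to
\[
T > \frac{1}{\alpha} \ln \mu^{-1}.
\]
So it suffices to produce a lower bound on $T$ whose coefficient in front of $\ln\mu^{-1}$ exceeds $1/\alpha$.

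Such a bound is already essentially contained in the proof of Lemma~\ref{lem1-T}. There one has, on $\Sigma^{-}$, $\mathbb{Y}(T) \approx \mu^{-1}\varepsilon$ (Corollary~\ref{coro2-s2.3b}); on $\Sigma^{+}$, $1 < \mathbb{Y}_{0} < K(\varepsilon)$ (Corollary~\ref{coro1-s2.3c} together with the definition of $\Sigma^{+}$); and $|V(T)| < K\mu$ (Proposition~\ref{prop-2.4}). Since $\mathbb{Y}(T) = \mathbb{Y}_{0} e^{(\beta + V(T))T}$, this gives
\[
T = \frac{1}{\beta + V(T)} \ln \frac{\mathbb{Y}(T)}{\mathbb{Y}_{0}} = \frac{1}{\beta}\ln\mu^{-1} + r ,
\]
where $|r| \leqs K(\varepsilon)$ once $\mu_{0}$ is small; in particular the lower constant $K_{4}$ of Lemma~\ref{lem1-T} may be taken to be any number strictly below $1/\beta$, at the cost of shrinking $\mu_{0}$ (which is allowed to depend on $\varepsilon$).

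Finally I would invoke the dissipativity hypothesis~\bpref{li:h1}\bpref{li:h1b}, namely $0 < \beta < \alpha$, which gives $1/\beta > 1/\alpha$. Hence we may fix $K_{4}$ with $1/\alpha < K_{4} < 1/\beta$, and after shrinking $\mu_{0}$ so that $T > K_{4} \ln\mu^{-1}$ holds on $\Sigma^{+} \times (-\infty, \ln\mu_{0}]$, the first step yields $\mu^{-1} e^{-\alpha T} < 1$. There is no genuinely hard step here: the entire content is that $\beta < \alpha$ forces $e^{-\alpha T}$ to decay faster than $\mu$ once $T$ lies within a bounded additive error of $\beta^{-1}\ln\mu^{-1}$. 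The only point requiring care is ensuring that the effective lower constant for $T$ strictly exceeds $1/\alpha$, which is handled by taking $\mu_{0}$ sufficiently small.
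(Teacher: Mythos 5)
Your proof is correct and rests on exactly the same ingredients as the paper's: the identity $T = (\beta + V(T))^{-1} \ln (\mathbb{Y}(T)/\mathbb{Y}_0)$ together with $\mathbb{Y}(T) \approx \varepsilon \mu^{-1}$, $1 < \mathbb{Y}_0 < K(\varepsilon)$, $|U(T)|, |V(T)| < K\mu$, and the dissipativity $\beta < \alpha$. The paper packages the identical computation through the $\mathbb{X}$-equation (first showing $\mathbb{X}(T) \ll \varepsilon$ and then sandwiching $\tfrac{1}{10}\varepsilon \mu^{-1} e^{-\alpha T} < \mathbb{X}(T)$), whereas you reformulate the claim directly as the lower bound $T > \alpha^{-1}\ln\mu^{-1}$; both routes are equivalent and yours is, if anything, slightly cleaner.
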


\begin{proof}[Proof of Lemma~\ref{lem2-T}]
We substitute
\begin{equation*}
T = \frac{1}{\beta + V(T)} \ln \frac{{\mathbb Y}(T)}{{\mathbb
Y}_0}
\end{equation*}
into~(\ref{f1-T}) to obtain
\begin{equation*}
{\mathbb X}(T) = \left( \frac{{\mathbb Y}_0}{{\mathbb
Y}(T)}\right)^{\frac{\alpha - U(T)}{\beta+V(T)}} {\mathbb X}_0.
\end{equation*}
We then use ${\mathbb Y}(T) \approx \varepsilon \mu^{-1}$,
${\mathbb X}_0 \approx \varepsilon \mu^{-1}$, $|U(T)| < K \mu$,
$|V(T)| < K \mu$, and $\alpha > \beta$ to conclude that ${\mathbb
X}(T) \ll \varepsilon$. We have
\begin{equation*}
\frac{1}{10} \varepsilon \mu^{-1} e^{-\alpha T} < {\mathbb X}_0
e^{(- \alpha + U(T))T} = {\mathbb X}(T) \ll \varepsilon.
\end{equation*}
For the first inequality, we use ${\mathbb X}_0 \approx
\varepsilon \mu^{-1}$ and $|U(T) T| < K \mu \ln \mu^{-1} \ll 1$.
This proves the lemma.
\end{proof}

\vspace{0.1cm}

\noindent {\mathversion{bold} $C^1$} {\bfseries estimates.}  We
present $C^{1}$ estimates with respect to $(Z_0, \theta_0, p)$,
where $(Z_0, \theta_0) \in \Sigma^+$ and $p \in (-\infty, \ln
\mu_0]$.

\begin{lemma}\label{lem3-T}
There exist constants $K_7$ and $K_8$ independent of $\varepsilon$
such that
\begin{equation*}
\|{\mathbb X}(T) \|_{C^1} < K_7 + K_8 \|T\|_{C^1}, \quad
\|\theta(T)\|_{C^1} < K_7 + K_8 \|T\|_{C^1}.
\end{equation*}
\end{lemma}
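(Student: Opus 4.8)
The plan is to treat $\theta(T)$ and ${\mathbb X}(T)$ separately: the $\theta$-component is immediate, while for ${\mathbb X}(T)$ I would differentiate the formula ${\mathbb X}(T) = {\mathbb X}_0\, e^{(-\alpha + U(T))T}$ from~(\ref{f1-T}) and absorb the only apparently-unbounded contributions using the $C^0$ controls already available. Since $\theta(T) = \theta_0 + \omega T$, its first partials with respect to $Z_0$, $\theta_0$, $p$ are $\omega\,\partial_{Z_0}T$, $1 + \omega\,\partial_{\theta_0}T$, $\omega\,\partial_p T$, and $|\theta(T)| \le 2\pi + \omega\|T\|_{C^0}$; hence $\|\theta(T)\|_{C^1} \le (2\pi+1) + 2\omega\|T\|_{C^1}$, with constants not involving $\varepsilon$.

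For the $C^0$ bound on ${\mathbb X}(T)$ I would combine Lemma~\ref{lem2-T}, which gives $e^{-\alpha T} < \mu$, with $|U| < K\mu$ from Proposition~\ref{prop-2.4} and $|T| < K\ln\mu^{-1}$ from Lemma~\ref{lem1-T}, obtaining $e^{(-\alpha + U)T} = e^{-\alpha T}e^{UT} < 2\mu$ once $\mu_0$ is small; since $|{\mathbb X}_0| < 2\mu^{-1}\varepsilon$ by~(\ref{f1-s3.3}), this yields $|{\mathbb X}(T)| < 4\varepsilon$, bounded independently of $\varepsilon$.

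For the first derivatives I would write, for $\ast \in \{Z_0, \theta_0, p\}$,
\[
\partial_\ast {\mathbb X}(T) = (\partial_\ast {\mathbb X}_0)\, e^{(-\alpha + U)T} + {\mathbb X}(T)\big[(\partial_\ast U)\,T + (-\alpha + U)\,\partial_\ast T\big],
\]
where $\partial_\ast U$ is the total derivative of $U(t, {\mathbb X}_0, {\mathbb Y}_0, \theta_0; p)$ obtained by the chain rule through $t = T$, ${\mathbb X}_0$, ${\mathbb Y}_0$ and the explicit $p$. The inputs are controlled by Proposition~\ref{prop-2.4} ($\partial_t U$, $\partial_{{\mathbb X}_0}U$, $\partial_{{\mathbb Y}_0}U$, $\partial_p U$ are each $< K\mu$ with $K$ independent of $\varepsilon$), by~(\ref{f1-s3.3}) ($|\partial_{Z_0}{\mathbb X}_0|$, $|\partial_{\theta_0}{\mathbb X}_0|$, $|\partial_\ast {\mathbb Y}_0| \le K$ while $|\partial_p{\mathbb X}_0| \le \mu^{-1}\varepsilon + K$), and by $|\partial_\ast T| \le \|T\|_{C^1}$. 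The first summand is then $\le (\mu^{-1}\varepsilon + K)\cdot 2\mu < 3\varepsilon$; the term ${\mathbb X}(T)(-\alpha + U)\partial_\ast T$ is $\le 4\varepsilon\cdot 2\alpha\cdot\|T\|_{C^1}$; and in ${\mathbb X}(T)(\partial_\ast U)T$ the only large input, $\partial_p{\mathbb X}_0 \sim \mu^{-1}\varepsilon$, always meets a factor $< K\mu$, so $|\partial_\ast U| \le K\mu\|T\|_{C^1} + K\varepsilon$ and hence $|{\mathbb X}(T)(\partial_\ast U)T| \le K\varepsilon\mu\ln\mu^{-1}\|T\|_{C^1} + K\varepsilon^2\ln\mu^{-1}$.

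This last estimate is the crux. The cross term $K\varepsilon\mu\ln\mu^{-1}\|T\|_{C^1}$ is $\le\|T\|_{C^1}$ once $\mu_0$ is chosen small, since $\mu\ln\mu^{-1}\to 0$. The genuinely dangerous term $K\varepsilon^2\ln\mu^{-1}$ cannot be made small, but Lemma~\ref{lem1-T} supplies $\ln\mu^{-1} < K_4^{-1}T(q_0;\mu) \le K_4^{-1}\|T\|_{C^0} \le K_4^{-1}\|T\|_{C^1}$, so it too is swallowed by a multiple of $\|T\|_{C^1}$. Collecting the pieces yields $\|{\mathbb X}(T)\|_{C^1} < K_7 + K_8\|T\|_{C^1}$ with $K_7$, $K_8$ depending only on $\alpha$, $\omega$, $K$, $K_4$, hence independent of $\varepsilon$. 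I expect the main obstacle to be precisely this bookkeeping for the $p$-derivatives: $\partial_p{\mathbb X}_0$ is of size $\sim\mu^{-1}\varepsilon$ and $\partial_p$ of the exponent is of size $\sim\ln\mu^{-1}$, so one must exploit their pairing with the $O(\mu)$ factors of Proposition~\ref{prop-2.4} and the $C^0$ lower bound on $T$ from Lemma~\ref{lem1-T} rather than estimating term by term.
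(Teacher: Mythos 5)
Your argument is correct and follows the same route as the paper's (much terser) proof: differentiate ${\mathbb X}(T) = {\mathbb X}_0 e^{(-\alpha+U(T))T}$, substitute~(\ref{f1-s3.3}) for ${\mathbb X}_0$, and control the pieces with Proposition~\ref{prop-2.4} and Lemma~\ref{lem2-T}; you have simply made explicit the bookkeeping (in particular the pairing of $\partial_p{\mathbb X}_0\sim\mu^{-1}\varepsilon$ with the $O(\mu)$ bounds, and the absorption of $\ln\mu^{-1}$ via the lower bound in Lemma~\ref{lem1-T}) that the paper leaves to the reader.
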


\begin{proof}[Proof of Lemma~\ref{lem3-T}]
The bound on $\theta(T)$ is trivial because $\theta(T) = \theta_0
+ \omega T$. For ${\mathbb X}(T)$, we have
\begin{align*}
{\mathbb X}(T) &= {\mathbb X}_0 e^{(-\alpha + U(T))T} \\
&= \varepsilon \mu^{-1} e^{(-\alpha + U(T, {\mathbb X}_0, {\mathbb
Y}_0, \theta_0; p))T}+ {\mathcal O}_{Z_0, \theta_0,
p}(1)e^{(-\alpha + U(T, {\mathbb X}_0, {\mathbb Y}_0, \theta_0;
p))T}.
\end{align*}
Notice that for the second equality, ~(\ref{f1-s3.3}) is used for
${\mathbb X}_0$. We regard ${\mathbb X}_0$ and ${\mathbb Y}_0$ as
functions of $Z_0$, $\theta_0$, and $p$ defined by
~(\ref{f1-s3.3}). The desired estimate follows from using
Proposition ~\ref{prop-2.4} for $U$ and ~(\ref{f1-s3.3}) for
${\mathbb X}_0$ and ${\mathbb Y}_0$. We also use Lemma
~\ref{lem2-T}.
\end{proof}

\begin{lemma}\label{lem4-T}
We have
\begin{equation*}
\|T - \frac{1}{\beta} \ln \mu^{-1}\|_{C^1} < K.
\end{equation*}
\end{lemma}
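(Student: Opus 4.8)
The plan is to convert the implicit equation defining $T$ into a nearly explicit formula for $T-\tfrac{1}{\beta}\ln\mu^{-1}$, differentiate it once, and absorb the terms in which $T$ reappears.

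First I would combine ~(\ref{f2-T}) and ~(\ref{f3-T}): equating the two expressions for $\mathbb{Y}(T)$ and taking logarithms — legitimate since $\mathbb{Y}_0>1$ by Corollary~\ref{coro1-s2.3c} and $\varepsilon\mu^{-1}+{\bf f}(\mathbb{X}(T),\theta(T);p)=\mu^{-1}(\varepsilon+\mu{\bf f}(\mathbb{X}(T),\theta(T);p))>0$ — one obtains
\begin{equation*}
T-\frac{1}{\beta}\ln\mu^{-1}=\frac{1}{\beta}\Bigl[\ln\bigl(\varepsilon+\mu{\bf f}(\mathbb{X}(T),\theta(T);p)\bigr)-\ln\mathbb{Y}_0-V(T,\mathbb{X}_0,\mathbb{Y}_0,\theta_0;p)\,T\Bigr],
\end{equation*}
where $\mathbb{X}_0,\mathbb{Y}_0$ are the functions of $(Z_0,\theta_0,p)$ given by ~(\ref{f1-s3.3}) and $\mathbb{X}(T),\theta(T)$ are given by ~(\ref{f1-T}). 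Write $R=T-\tfrac{1}{\beta}\ln\mu^{-1}$. The right-hand side is already bounded: $|V\,T|\le K\mu\cdot K_{5}\ln\mu^{-1}\to 0$ by Proposition~\ref{prop-2.4} and Lemma~\ref{lem1-T}, while $\ln(\varepsilon+\mu{\bf f})$ and $\ln\mathbb{Y}_0$ are bounded because $\mu{\bf f}\ll\varepsilon$ and $\mathbb{Y}_0>1$; this settles the $C^0$ content. It remains to bound the first partial derivatives of $R$.

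Next I would differentiate the identity with respect to $Z_0$, $\theta_0$, and $p$ (the term $\ln\mu^{-1}=-p$ contributes only the constant $-1/\beta$ to $\partial_p$, which is exactly what $R$ subtracts off). The three pieces are handled thus. In $\partial_\zeta\ln(\varepsilon+\mu{\bf f})=\mu\,\partial_\zeta{\bf f}/(\varepsilon+\mu{\bf f})$ (plus a harmless $\mu{\bf f}/(\varepsilon+\mu{\bf f})$ when $\zeta=p$), the chain rule through $\mathbb{X}(T)$ and $\theta(T)$ calls on Lemma~\ref{lem3-T} to bound $\partial_\zeta\mathbb{X}(T)$ and $\partial_\zeta\theta(T)$ by $K_{7}+K_{8}\|T\|_{C^1}$; with the prefactor $\mu/(\varepsilon+\mu{\bf f})\le 2\mu/\varepsilon$ this piece is $\le\eta(1+\|T\|_{C^1})$ for a small $\eta$. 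The piece $\partial_\zeta\ln\mathbb{Y}_0=\partial_\zeta\mathbb{Y}_0/\mathbb{Y}_0$ is bounded by a constant directly from ~(\ref{f1-s3.3}) and $\mathbb{Y}_0>1$, and — crucially — involves no $\|T\|_{C^1}$. Finally $\partial_\zeta(V\,T)=T\,\partial_\zeta V+V\,\partial_\zeta T$: I expand $\partial_\zeta V$ by the chain rule through $(T,\mathbb{X}_0,\mathbb{Y}_0,p)$, bounding every partial of $V$ by $K\mu$ via Proposition~\ref{prop-2.4} and the partials of $\mathbb{X}_0,\mathbb{Y}_0$ via ~(\ref{f1-s3.3}), so that $|\partial_\zeta V|\le K\mu(1+\|T\|_{C^1})$; multiplying by $|T|<K\ln\mu^{-1}$ (Lemma~\ref{lem1-T}) and using $|V|<K\mu$ gives $|\partial_\zeta(V\,T)|\le K\mu\ln\mu^{-1}(1+\|T\|_{C^1})$. (Lemma~\ref{lem2-T} is used along the way to keep the orbit inside $\mscr{U}_{\varepsilon}$ so that Proposition~\ref{prop-2.4} applies.)

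Collecting these gives a self-referential estimate $\|R\|_{C^1}\le K+\eta(1+\|T\|_{C^1})$, with $\eta$ a maximum of quantities of the form $\mu/\varepsilon$, $\mu\ln\mu^{-1}$, $\mu$, hence $<\tfrac{1}{2}$ once $\mu_0$ is small enough. Since the first derivatives of $\tfrac{1}{\beta}\ln\mu^{-1}=-p/\beta$ are just $-1/\beta$ and $0$, one has $\|T\|_{C^1}\le\|R\|_{C^1}+1/\beta$, and the estimate closes to $(1-\eta)\|R\|_{C^1}\le K+\eta(1+1/\beta)$, giving the bound. The main obstacle is exactly this self-reference — $T$ occurs on the right both explicitly (as $V\,T$) and inside the compositions $\mathbb{X}(T)$, $\theta(T)$, ${\bf f}$, and $V$ — so the computation must be organized so that every reappearance of $\|T\|_{C^1}$ carries one of the small factors $\mu$, $\mu/\varepsilon$, or $\mu\ln\mu^{-1}$; passing to $R$ is what keeps both the $C^0$ bound and the closing algebra finite despite $p$ ranging over the unbounded interval $(-\infty,\ln\mu_0]$.
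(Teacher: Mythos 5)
Your proposal is correct and follows essentially the same route as the paper: both rearrange the implicit equation $\mu^{-1}(\varepsilon+\mu\mathbf{f}(\mathbb{X}(T),\theta(T);p))=\mathbb{Y}_0 e^{(\beta+V(T))T}$ into an explicit identity for $T-\tfrac{1}{\beta}\ln\mu^{-1}$ (the paper's~(\ref{f4-T}) is just your identity with the $V$-dependence folded into the prefactor $\tfrac{1}{\beta+V(T)}$ instead of the product $V(T)\,T$), read off the $C^0$ bound from Proposition~\ref{prop-2.4}, and then differentiate and close the self-referential estimate — your $\|R\|_{C^1}\le K+\eta(1+\|T\|_{C^1})$ with $\eta\ll 1$ is exactly the paper's $\partial_{Z_0}T=(I)+(II)\,\partial_{Z_0}T$ with $|(I)|<K$ and $|(II)|\ll 1$, using the same inputs (Lemma~\ref{lem3-T}, Proposition~\ref{prop-2.4}, and~(\ref{f1-s3.3})).
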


\begin{proof}[Proof of Lemma~\ref{lem4-T}]
Using ~(\ref{f2-T}), we write ~(\ref{f3-T}) as
\begin{equation*}
\mu^{-1} (\varepsilon + \mu {\bf f}({\mathbb X}(T), \theta(T); p))
= {\mathbb Y}_0 e^{(\beta + V(T))T}.
\end{equation*}
Solving for $T$, we obtain
\begin{equation}\label{f4-T}
\begin{split}
T - \frac{1}{\beta} \ln \mu^{-1} &= - \frac{V(T)}{\beta(\beta +
V(T))}\ln \mu^{-1} - \frac{1}{\beta + V(T)} \ln {\mathbb Y}_0 \\
&\qquad + \frac{1}{\beta + V(T)} \ln (\varepsilon + \mu {\bf
f}({\mathbb X}(T), \theta(T); p)).
\end{split}
\end{equation}
In~\pref{f4-T}, $V(T) = V(T, {\mathbb X}_0, {\mathbb Y}_0,
\theta_0; p)$, and ${\mathbb X}_0$ and ${\mathbb Y}_0$ are written
in terms of $Z_0$, $\theta_0$, and $p$ using ~(\ref{f1-s3.3}).
Using Proposition ~\ref{prop-2.4}, we have
\begin{equation*}
\|T - \frac{1}{\beta} \ln \mu^{-1}\|_{C^0} < K.
\end{equation*}

First derivatives of $T$ are estimated by directly differentiating
~(\ref{f4-T}). We estimate $\partial_{Z_0} T$ as a representative
example. Differentiating ~(\ref{f4-T}), we have
\begin{equation*}
\partial_{Z_0} T = (I) + (II) \partial_{Z_0} T,
\end{equation*}
where (I) is a collection of terms that do not depend on
$\partial_{Z_0} T$ and (II) is a function of $Z_0$, $\theta_0$,
and $p$. Using Proposition ~\ref{prop-2.4} for $V(T)$,
~(\ref{f1-s3.3}) for ${\mathbb X}_0$ and ${\mathbb Y}_0$, and
Lemma ~\ref{lem3-T} for $\partial_{Z_0} {\mathbb X} (T)$ and
$\partial_{Z_0} \theta(T)$, we have $|(I)| < K$ and $|(II)| \ll
1$.
\end{proof}

\vspace{0.1cm}

\noindent {\bfseries Higher derivative estimates.} With the first
derivatives controlled by Lemmas ~\ref{lem3-T} and ~\ref{lem4-T},
we estimate the second derivatives by first proving a version of
Lemma ~\ref{lem3-T} and then proving a version of Lemma
~\ref{lem4-T} for the $C^2$ norms. We then do the same for the
$C^3$ norms.  This completes the proof of
Proposition~\ref{prop1-s3.3}.

\bibliographystyle{amsplain}
\bibliography{dhl_lit}

\end{document}